\theoremstyle{plain}
\newtheorem{lemma}{Lemma}[section]
\newtheorem{proposition}[lemma]{Proposition}
\newtheorem{corollary}[lemma]{Corollary}
\newtheorem{theorem}[lemma]{Theorem}
\newtheorem{observation}[lemma]{Observation}
\theoremstyle{definition}
\newtheorem{remark}[lemma]{Remark}
\newcommand{\ra}{\rightarrow}
\newcommand{\epi}{\twoheadrightarrow}
\newcommand{\inclu}{\hookrightarrow}
\newcommand{\up}{{\uparrow}}
\newcommand{\tc}{\textit}
\newcommand{\ca}{\mathcal}
\newcommand{\mf}{\mathsf}
\newcommand{\se}{\subseteq}
\newcommand{\sm}{\setminus}
\newcommand{\we}{\wedge}
\newcommand{\ve}{\vee}
\newcommand{\bwe}{\bigwedge}
\newcommand{\bve}{\bigvee}
\newcommand{\bcu}{\bigcup}
\newcommand{\SSS}{\mf{S}}
\newcommand{\bl}{\mathfrak{b}}
\newcommand{\cl}{\mathfrak{c}}
\newcommand{\op}{\mathfrak{o}}
\newcommand{\spa}{\mathsf{sp}}
\newcommand{\sob}{\mathsf{sob}}
\newcommand{\Om}{\Omega}
\newcommand{\pt}{\mathsf{pt}}
\newcommand{\SL}{\mathsf{S}(L)}
\newcommand{\SLb}{\mf{S}_b(L)}
\newcommand{\SLd}{\mf{S}_D(L)}
\newcommand{\SLc}{\mf{S}_{c}(L)}
\newcommand{\set}[1]{\{\,#1\,\}}
\newcommand{\bigset}[1]{\bigl\{\,#1\,\bigr\}}
\newcommand{\cat}[1]{\ensuremath{\mathsf{#1}}} 
\newcommand{\donotbreakdash}[1]{#1\nobreakdash-\hspace{0pt}}
\title{The coframe of \donotbreakdash{$D$}sublocales of a locale and the \donotbreakdash{$T_D$}duality}
\date{}
\author[1,2]{Igor Arrieta}
\author[1,3]{Anna Laura Suarez}
\affil[1]{University of Coimbra, CMUC, Department of Mathematics}
\affil[2]{Universidad del Pa\'is Vasco UPV/EHU, Departamento de Matem\'aticas}
\affil[3]{University of Birmingham, School of Computer Science}
\begin{document}

\maketitle

\abstract{The notion of \emph{D-sublocale} is explored. This is the notion analogue to that of sublocale in the duality of \donotbreakdash{$T_D$}spaces. A sublocale $S$ of a frame $L$ is a \donotbreakdash{$D$}sublocale if and only if the corresponding localic map preserves the property of being a covered prime. It is shown that for a frame $L$ the system of those sublocales which are also \donotbreakdash{$D$}sublocales form a dense sublocale $\SLd$ of the coframe $\SL$ of all its sublocales. It is also shown that the spatialization $\spa_D[\SLd]$ of $\SLd$ consists precisely of those \donotbreakdash{$D$}sublocales of $L$ which are \donotbreakdash{$T_D$}spatial. Additionally, frames such that we have $\SLd\cong \ca{P}(\pt_D(L))$ --- that is, those such that \donotbreakdash{$D$}sublocales perfectly represent subspaces --- are characterized as those \donotbreakdash{$T_D$}spatial frames such that $\SLd$ is the Booleanization of $\SL$. \\[1mm]
\noindent  \textsc{Keywords:} Locale, \donotbreakdash{$T_D$}space, coframe, totally spatial frame.\\[1mm]
\noindent  \textsc{Math. Subject Classification (2020):} 18F70, 06D22.}

\section*{Introduction}
For a point-free space (that is, a frame) $L$ the ordered collection of its point-free subspaces, that is, its \emph{sublocales}, is always a coframe usually denoted as $\SL$. Embedded in $\SL$ as a coframe we have the coframe $\mf{Cl}(L)$ of the \emph{closed} sublocales of $L$, point-free versions of the closed subspaces of a space. We have an anti-isomorphism $L\ra \mf{Cl}(L)$. Embedded in $\SL$ as a frame we also have $\mf{Op}(L)$, the ordered collection of all \emph{open} sublocales of $L$; these represent the open subspaces of the space $L$. It is not surprising, then, that the frame $\mf{Op}(L)$ is isomorphic to $L$. Since the system $\SL^{op}$ is a frame, it has its own sublocales, and among these there are certain special ones that encode information about the frame $L$. Next to the sublocales of $\SL^{op}$ that are particularly interesting are also certain subsets. Recently, some of these special sublocales and subsets have enjoyed special attention. We have the system of all joins of closed sublocales $\SLc$ (see \cite{joinsofclosed19}), which coincides with the smallest dense sublocale of $\SL$ if and only if the frame $L$ satisfies a weak separation axiom called \emph{subfitness}. We then have $\SLb$, the smallest dense sublocale of $\SL$. The properties of this sublocale have been studied in \cite{Arrieta20}. We also have $\spa[\SL]$ (see \cite{Suarez20}), the sublocale of $\SL$ consisting of those sublocales which are spatial --- that is, those sublocales which are the frame of opens of some topological space. 

We explore the relation these important subsets of the assembly $\SL$ of a frame $L$; and we identify another which also encodes some special properties of $L$, in particular in the context of the \donotbreakdash{$T_D$}duality. We will look at the notion of \tc{\donotbreakdash{$D$}sublocale} of $L$, a sublocale $S\se L$ such that the corresponding frame surjection is a $D$-homomorphism (i.e. it is such that every prime covered in $S$ is also covered in $L$). We take \donotbreakdash{$D$}sublocales as being the analogues of sublocales in the \donotbreakdash{$T_D$}duality. We show that for a frame $L$ the \donotbreakdash{$D$}sublocales form a coframe $\SLd$ which is always a ($D$-)subcolocale of $\SL$.

We then relate the subcolocale $\SLd$ with the subcolocale $\spa[\SL]$ (i.e. the ordered collection of all spatial sublocales of $L$); as well as with the subcolocale $\SLb$ (i.e. the Booleanization subcolocale of $\SL$). We also relate all these structures with the subset $\SLc\se \SL$ of all joins of closed sublocales. We obtain characterizations of all possible set inclusions between these structures, as depicted in the table below. 

\begin{table}[H]
\begin{center}
 \begin{tabular}{|c|c|}
\hline
\rowcolor[HTML]{EFEFEF} 
\textbf{Relations between sublocales of $\SSS(L)$} & \textbf{Properties of $L$} \\ \hline
              $\SSS_b(L)\subseteq \spa[\SSS(L)]$                                &           Spatial                 \\ \hline
              $\SSS_b(L)= \spa[\SSS(L)]$                              &                        Strongly \donotbreakdash{$T_D$}spatial     \\ \hline
              $\SSS_b(L)=\SSS(L)$                                 &             Scattered               \\ \hline
              $\SSS(L)=\SSS_D(L)$                                 &        Primes are covered                    \\ \hline
              $\SSS(L)=\spa(\SSS(L))$                                 &        Totally spatial                    \\ \hline
              $\SSS_b(L)=\SSS_D(L)$                             &        \donotbreakdash{$D$}scattered                    \\ \hline
                            $\SSS_D(L)\subseteq \SSS_b(L)$      &        \donotbreakdash{$D$}scattered                                          \\ \hline
                 $\SSS_D(L)\subseteq \spa(\SSS(L))$                              &      Totally spatial                       \\ \hline
                         $\spa[\SSS(L)]     \subseteq \SSS_D(L)$            &        Primes are covered                    \\ \hline
                                      
              $\SLc\se \spa[\SL]$  &  Spatial \\ \hline
               
              $\spa[\SL]\se \SLc$  &  Primes are maximal\\ \hline
              
              $\SLd\se \SLc$ &      Subfit + \donotbreakdash{$D$}scattered    \\ \hline
              
              $\SLd=\SLc$ & Subfit + \donotbreakdash{$D$}scattered \\ \hline
              $\SL= \SLc$ & Subfit + scattered\\ \hline
\end{tabular}
\end{center}
\end{table}

 We then show that we have an adjunction of posets
 \[
\begin{tikzcd}[row sep=huge, column sep=huge, text height=1.5ex, text depth=0.25ex]
 \mathcal{P}(\pt_D(L))  \arrow[bend left=20]{r}[name=a]{\mathfrak{M}} & \mf{S}_D(L) \arrow[bend left=20]{l}[name=b]{\pt_D} \arrow[phantom, from=a, to=b]{}{\ \bot}
\end{tikzcd}
\]
Here, $\pt_D(S)$ is the collection of covered primes of a \donotbreakdash{$D$}sublocale $S$. As $S$ is assumed to be a \donotbreakdash{$D$}sublocale, there is no ambiguity in this phrase as primes of $S$ are covered in $S$ if and only if they are covered in $L$. The adjunct $\pt_D$, then, takes the $T_D$ spectrum of a \donotbreakdash{$D$}sublocale, while the adjunct $\mathfrak{M}$ closes the subset under arbitrary meets. The fixpoints of $\pt_D\circ \mathfrak{M}$, then, are the equivalents of the sober spaces in the \donotbreakdash{$T_D$}duality (those isomorphic to the spectrum of their frame of opens); while the fixpoints of $\mathfrak{M}\circ \pt_D$ are those \donotbreakdash{$D$}sublocales which play the role of spatial frames in the classical duality (i.e. they are those isomorphic to the frame of opens of their spectrum).

It is shown that the composition $\pt_D\circ\mathfrak{M}$ is the identity. It is shown that the fixpoints of $\mathfrak{M}\circ \pt_D$ are the spatial \donotbreakdash{$D$}sublocales. We finally use this adjunction to ask ourselves how we can characterize those frames $L$ such that the \donotbreakdash{$D$}sublocales of $L$ perfectly represent the subspaces of $\pt_D(L)$. We reach the conclusion that there is a fundamental difference in the way subspaces and sublocales behave in the \donotbreakdash{$T_D$}duality, compared to the classical duality. In the classical spatial-sober duality, we could ask ourselves three questions. In the following $\sob[\ca{P}(\pt(L))]$ denotes the ordered collection of the sober subspaces of $\pt(L)$.
\begin{enumerate}
    \item When is it that for a frame $L$ we have that taking the spectrum induces an isomorphism $\spa[\SL]\cong \ca{P}(\pt(L))$? These are exactly the frames for which all subspaces of $\pt(L)$ are sober.
    \item When is it that for a frame $L$ we have that taking the spectrum induces an isomorphism $\SL\cong \sob[\ca{P}(\pt(L))]$? These are exactly the frames for which all sublocales of $L$ are spatial (the \tc{totally spatial} frames, explored for instance in \cite{niefield87}).
  \item When is it that we have both of the above conditions? In other words, when is it that taking the spectrum induces an isomorphism $\SL\cong \ca{P}(\pt(L))$? Adapting a famous result by Simmons (see \cite{simmons80}) we find that these are the frames such that they are spatial and the space $\pt(L)$ is \tc{scattered}.
\end{enumerate}
In the \donotbreakdash{$T_D$}duality, the answer to the analogue of the first question collapses to ``always", and so the second and the third question collapse to the same.

\tableofcontents

\section*{Preliminaries}
We first recall some background on point-free topology. For more information on the categories of frames and locales, we refer the reader to Johnstone \cite{johnstone82} or the more recent Picado-Pultr \cite{picadopultr2011frames}. A  \emph{locale} (or \emph{frame}) is a complete lattice $L$ satisfying
$$a\wedge \bigvee B = \bigvee \set{a\wedge b\mid b\in B}$$
for all $a\in L$ and $B\subseteq L$. A \emph{frame homomorphism} is a function preserving arbitrary joins (including the bottom element $0$) and finite meets (including the top element $1$). Frames and their homomorphisms form a category \cat{Frm}. For each $a$ the map $a\wedge(-)$ preserves arbitrary joins, thus it has a right (Galois) adjoint $a \to (-)$,  making $L$ a complete Heyting algebra (i.e. a cartesian closed category, if one regards $L$ as a  thin category). This right adjoint is called the \emph{Heyting operator}. In particular, the \emph{pseudocomplement} of an $a\in L$ is the element $a^*=a\to 0$ and it can be characterized as the largest $b\in L$ such that $a\wedge b=0$.

\subsection*{The categories \cat{Loc} and \cat{Frm}} Given a topological space $X$, its lattice of open sets $\Omega(X)$ is always a frame, and this construction can be upgraded to a functor $\Omega\colon {\cat{Top}\longrightarrow \cat{Frm}^{op}}$ which sends a continuous map $f\colon X\to Y$ to the preimage operator $f^{-1}[-]\colon \Omega(Y)\to\Omega(X)$. An element $p$ of a frame $L$ is said to be \emph{prime} if whenever $p=x\wedge y$ for some $x,y\in L$, then $p=x$ or $p=y$. The collection of all primes of $L$ will be denoted by $\pt(L)$ and we shall refer to it as the \emph{spectrum} (or the \emph{classical spectrum} or \emph{sober spectrum}) of $L$. For each $a\in L$, we set $\Sigma_a=\set{ p\in \pt(L)\mid a\not\leq p} $. Then the family $\set{ \Sigma_a\mid a\in L} $ is a topology on $\pt(L)$.  The assignment $\Sigma(L)=(\pt(L),\set{\Sigma_a\mid a\in L})$  extends to an spectrum functor $\Sigma\colon {\cat{Frm}^{op}\longrightarrow \cat{Top}}$ which yields an adjunction 
\[
\begin{tikzcd}[row sep=huge, column sep=huge, text height=1.5ex, text depth=0.25ex]
 \cat{Top} \arrow[bend left=20]{r}[name=a]{\Omega} &\cat{Frm}^{op}  \arrow[bend left=20]{l}[name=b]{\Sigma} \arrow[phantom, from=a, to=b]{}{\ \bot}
\end{tikzcd}
\]

The category $\cat{Loc}$ of locales is by definition the opposite category of $\cat{Frm}$: $$\cat{Loc}=\cat{Frm}^{op},$$
and $\Omega$ restricts to a full embedding of a substantial part of $\cat{Top}$ (namely the full subcategory of sober spaces) into $\cat{Loc}$. The latter can therefore be seen as a category of generalized spaces. We shall mostly speak of objects in \cat{Loc} as locales (instead of frames) when emphasizing the covariant approach. Morphisms in \cat{Loc} can be concretely represented by the right (Galois) adjoints $f_*\colon M\longrightarrow L$ of the corresponding frame homomorphisms $f\colon L\longrightarrow M$; these will be referred to as \emph{localic maps}.

A frame is said to be \emph{spatial} if $L\cong \Omega(X)$ for some space $X$, or equivalently if $L\cong \Omega(\Sigma(L))=\set{\Sigma_a\mid a\in L}$. For any frame $L$, there is a surjective frame homomorphism  (the counit of the adjunction) $L\twoheadrightarrow \Omega(\Sigma(L))$ sending $a$ to $\Sigma_a$; this map  is usually called the \emph{spatialization} of $L$. 

A topological space is \emph{sober} if every prime of $\Omega(X)$ is of the form $X\setminus \overline{\{x\}}$ for a unique $x\in X$. It is well-known that the space $\pt(L)$ defined above is sober for any frame $L$.
\subsection*{Sublocales}\label{presub} A regular subobject in $\cat{Loc}$ (that is, an isomorphism class of regular monomorphisms) of a locale $L$ is \emph{a sublocale} of $L$. Sublocales of a locale $L$ can be represented as the actual subsets $S\subseteq L$ such that 
\begin{enumerate}
\item \label{subp1} $S$ is closed under arbitrary meets in $L$, and 
\item \label{subp2} $a\to s\in S$ for all $a\in L$ and $s\in S$.  \end{enumerate}
A different, but equivalent, representation of sublocales is by means of nuclei, i.e. inflationary and idempotent maps $\nu\colon L\longrightarrow L$ which preserve binary meets. The sublocale associated to a nucleus $\nu$ is the image $\nu[L]$, and conversely the nucleus associated to a sublocale $S\subseteq L$ is given by $\iota_S \circ \nu_S$  where $\iota_S$ denotes the inclusion of $S$ into $L$ and $\nu_S$ is its left adjoint frame homomorphism given by 
$$\nu_S(a)=\bigwedge \set{ s\in S \mid s\geq a }.$$ 
A locale $L$ is said to be \emph{totally spatial} if each of its sublocales is spatial.

A sublocale should not be confused with a \emph{subframe}; the latter is a subobject of a frame in the category $\cat{Frm}$. Subframes can be represented as subsets which are closed under arbitrary joins and finite meets.

\subsubsection*{Closed and open sublocales}\label{opclsub} For each $a\in L$, one has an \emph{open sublocale} and a \emph{closed sublocale}$$\mathfrak{o}(a)=\set{b\mid b=a\to b}=\set{a\to b\mid b\in L} \quad\textrm{and}\quad \mathfrak{c}(a)=\uparrow a$$
which in the spatial case $L=\Omega(X)$ correspond to the open and closed subspaces. 

If $S$ is a sublocale of $L$, the \emph{closure} of $S$ in $L$, denoted by $\overline{S}$, is the smallest closed sublocale containing $S$, which can be computed as $\overline{S}=\mathfrak{c}(\bigwedge S)$. 
A sublocale $S$ is \emph{dense} if $\overline{S}=L$, or equivalently if $0\in S$. A sublocale $S$ is said to be codense if $\nu_S(a)=1$ implies $a=1$.

\subsubsection*{Boolean sublocales} For each $a\in L$, there is a sublocale $\mathfrak{b}(a)=\set{b\to a\mid b\in L}$ which turns out to be Boolean. Moreover, every Boolean sublocale of $L$ is of the form $\mathfrak{b}(a)$ for some $a\in L$. If $p\in L$ is a prime, it is easy to check that then $\mathfrak{b}(p)=\set{1,p}$, and these sublocales are usually referred to as \emph{one-point sublocales}. 

In particular, the sublocale $\mathfrak{b}(0)=\set{x^*\mid x\in L}$ is called the \emph{Booleanization} of $L$ and it can be characterized either as the least dense sublocale of $L$ or the unique Boolean dense sublocale of $L$.

\subsubsection*{The coframe of sublocales}\label{univ} The family $$\mathsf{S}(L)$$ of all sublocales of $L$ partially ordered by inclusion is a coframe (i.e. its dual poset $\SSS(L)^{op}$ is a frame) and lattice operations in $\SSS(L)$ are given by
$$\bigwedge_{i\in I} S_i =\bigcap_{i\in I} S_i ,\quad \bigvee_{i\in I} S_i= \bigset{\bigwedge A\mid A\subseteq \bigcup_{i\in I} S_i}.$$
It is not generally the case that $\SSS(L)$ is Boolean but it still has plenty of complemented sublocales: the frame $\SSS(L)^{op}$ is always zero-dimensional.  More precisely, every sublocale $S$ of $L$ can be written $S=\bigcap_i\mathfrak{c}(a_i)\vee \mathfrak{o}(b_i)$ for some $\{a_i\}_{i\in I}$, $\{b_i\}_{\in L}$, and  closed sublocales $\cl(a)$ and open sublocales $\op(a)$ are  complements of each other. We also have
$$\cl\bigl(\bigvee_i a_i\bigr)=\bigcap_i \cl(a_i),\quad \op\bigl(\bigvee_i a_i\bigr)=\bigvee_i \op(a_i),$$
$$\cl( a\wedge b)= \cl(a)\vee \cl(b),\quad \op( a\wedge b)= \op(a)\cap \op(b).$$
Further, a sublocale is said to be \emph{locally closed} if it is of the form $\op(a)\cap \cl(b)$ for some $a,b\in L$.

Since $\SSS(L)$ is a coframe, there is a \emph{co-Heyting operator} giving the \emph{difference} $S\smallsetminus T$ of two sublocales $S,T\in \SSS(L)$; this operator is characterized by the condition $$S\smallsetminus T \subseteq R \iff S\subseteq T\vee R.$$
In particular, the \emph{supplement} of $S\in\SSS(L)$ is $S^{\#}= L\smallsetminus S$, i.e. the smallest sublocale of $L$ whose join with $S$ is $L$. We shall freely use some its properties, e.g. the ones listed below
\begin{enumerate}
\item $S\setminus T\subseteq S$;
\item $S\setminus T=0$ iff $S\subseteq T$;
\item $S\setminus C=S\cap C^{\#}$;
\item $S\setminus \bigcap_i S_i=\bigvee_i (S\setminus S_i)$; 
\item $(S\setminus T)\setminus R = (S\setminus R)\setminus T$;
\end{enumerate}
for each $S,T,R\in \SSS(L)$, $\{S_i\}_{i\in I}\subseteq \SSS(L)$ and complemented sublocale $C\subseteq L$. A comprehensive list of its properties may be found in \cite{remainders}.

For several notions related to coframes, we shall use terminology from frame theory modified just by adding the prefix ``co-''. For example, a subset $S$ of a coframe $L$ will be said to be a \emph{subcolocale}  of $L$ if  $S^{op}$ is a sublocale of the frame $L^{op}$ (i.e. iff it is a subset of $L$ closed under arbitrary joins and the co-Heyting operator). Sometimes we will omit the prefix ``co-'' in order to avoid confusion: we will speak of a dense (resp. spatial, \donotbreakdash{$T_D$}spatial) subcolocale $S$ of a coframe $L$ for meaning that $S^{op}$ is a dense (resp. spatial, \donotbreakdash{$T_D$}spatial) sublocale of $L^{op}$.

\subsubsection*{Images and preimages} Every localic map $f\colon L\longrightarrow M$ gives, by pulling back in $\cat{Loc}$, an \emph{inverse image} map \linebreak$f_{-1}[-]\colon \mf{S}(M)\longrightarrow \SL$ which turns out to be a coframe homomorphism (i.e. a function which preserves arbitrary meets and finite joins). Set-theoretic direct image yields a \emph{direct image} map
$f[-] \colon \SL\longrightarrow \mf{S}(M)$ which is additionally a colocalic map (i.e. a left adjoint of a coframe homomorphism). In this context, the usual adjunction $f[-] \dashv f_{-1}[-]$ is satisfied.

\subsubsection*{Joins of closed (resp. complemented) sublocales}  Let $\SSS_c(L)$ denote the subset of $\SSS(L)$ consisting of joins of closed sublocales i.e.
$$\SSS_c(L)=\bigset{ \bigvee_{a\in A} \cl(a)\mid A \subseteq L},$$
 endowed with the inclusion order inherited from $\SSS(L)$. In the recent paper \cite{joinsofclosed19}, Picado, Pultr and Tozzi  show (a.o.) that $\SSS_c(L)$ is \emph{always} a frame which is embedded as a join-sublattice in the coframe $\SSS(L)$. One of the main results from \cite{joinsofclosed19} is that 
\begin{quote}\emph{if $L$ is subfit, and only in that case, $\SSS_c(L)$ is a Boolean algebra and coincides precisely with the Booleanization of $\SSS(L)$}.
\end{quote}
More generally, if $L$ is not necessarily subfit, it is also of interest to consider the Booleanization of $\SSS(L)$, denoted by $\SSS_b(L)$. The latter is precisely the system consisting \emph{smooth} sublocales, that is, those which are joins of complemented sublocales (or equivalently joins of locally closed sublocales). A study of the properties of $\SSS_b(L)$ may be found in \cite{Arrieta20}.
 
 \subsubsection*{Subspaces and sublocales}
 
 In \cite{Suarez20}, the relation between sublocales of a frame $L$ and subspaces of its spectrum $\pt(L)$ is explored. Spatial sublocales and sober subspaces are seen as fixpoints of the following adjunction of posets.
\[
\begin{tikzcd}[row sep=huge, column sep=huge, text height=1.5ex, text depth=0.25ex]
 \mathcal{P}(\pt(L))  \arrow[bend left=20]{r}[name=a]{\mathfrak{M}} & \mf{S}(L) \arrow[bend left=20]{l}[name=b]{\pt} \arrow[phantom, from=a, to=b]{}{\ \bot}
\end{tikzcd}
\]
Here, $\pt\colon\SL\ra \ca{P}(\pt(L))$ maps each sublocale to the collection of primes contained in it, while $\mathfrak{M}\colon\ca{P}(\pt(L))\ra \SL$ takes a collection of primes of $L$ and it closes it under arbitrary meets. For every frame $L$, we have that its spatialization is a surjection, and so it corresponds to a certain sublocale that we will denote by $\spa(L)$. For a sublocale $S\se L$, its spatialization sublocale is $\mathfrak{M}(\pt(S))$. Hence, the composition $\mathfrak{M}\circ \pt$ of the two adjoints above is spatialization. Similarly, the composition $\pt\circ \mathfrak{M}$ is sobrification, up to homeomorphism. For every frame $L$ we have the following commuting diagram in the category of coframes. The bottom arrow sends a sublocale $S$ to its prime spectrum $\pt(S)$, which simply consists of the collection $\pt(L)\cap S$. 
 
\begin{center}
\begin{tikzcd}[row sep=large, column sep = large]
\spa [\mf{S}(L)]  
\arrow{r}{\pt(\cong)}  
& \sob[\ca{P}(\pt(L))] 
\arrow[hookrightarrow]{d}  \\
\mf{S}(L) 
\arrow{r}{\pt} 
\arrow[twoheadrightarrow]{u}{\spa} 
& \ca{P}(\pt(L))  
\end{tikzcd}
\end{center}

The coframe $\spa[\SL]$ denotes the ordered collection of spatial sublocales of $L$, while $\sob[\ca{P}(\pt(L))]$ denotes the ordered collection of sober subspaces of $\pt(L)$. Finally, the vertical arrow maps each sublocale $S$ to its spatialization $\mathfrak{M}(\pt(S))$. 

\subsection*{The axiom $T_D$}\label{tdsubsection}

 A space $X$ is defined to be $T_D$ if every point $x\in X$ has some neighborhood $U$ with $U{\sm}\{x\}$ open. This is an axiom stronger than $T_0$ and weaker than $T_1$ and it was introduced in \cite{Aull62}. It has been used (for instance in \cite{Pultr94}) in order to answer the question of when a topological space can be completely recovered from its frame of opens. The \textit{Skula space} of a space $X$, denoted as $Sk(X)$, is the space defined as follows. The set of points is the same set of points as $X$, while the topology is the \textit{Skula topology}, that is the one generated by the opens of $X$ together with their complements. The following characterizations of \donotbreakdash{$T_D$}spaces can be found in \cite{picadopultr2011frames}.
 
 \begin{proposition}
 The following are equivalent for a $T_0$ space $X$.
 \begin{enumerate}
     \item The space is $T_D$.
     \item For no $x\in X$ do we have that the dualization of the inclusion $X{\sm}\{x\}\se X$ is a frame isomorphism.
     \item If $Y\nsubseteq Z$ then $\Om'(Y)\nsubseteq \Om'(Z)$, for all subspaces $Y,Z\se X$.
     \item The Skula space $Sk(X)$ is discrete.

 \end{enumerate}
 \end{proposition}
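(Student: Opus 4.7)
The plan is to establish the equivalences (1) $\Leftrightarrow$ (4), (1) $\Leftrightarrow$ (2), and (1) $\Leftrightarrow$ (3) separately, each of which turns on the same pivotal observation which I would record as a preliminary lemma: $X$ satisfies $T_D$ at a point $x$ if and only if $\{x\}$ is \emph{locally closed} in $X$, i.e., $\{x\} = U \cap F$ with $U$ open and $F$ closed. Indeed, a $T_D$-witnessing neighborhood $U$ of $x$ yields $\{x\} = U \cap (X \setminus (U \setminus \{x\}))$, and conversely any decomposition $\{x\} = U \cap F$ exhibits $U$ as an open neighborhood of $x$ with $U \setminus \{x\} = U \setminus F$ open.

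The equivalence (1) $\Leftrightarrow$ (4) is then immediate: a base for the Skula topology $Sk(X)$ consists of locally closed subsets, so $\{x\}$ is Skula-open precisely when $\{x\}$ is locally closed in $X$. For (1) $\Leftrightarrow$ (2), the dualization of the inclusion $X \setminus \{x\} \hookrightarrow X$ is the restriction $\Omega(X) \twoheadrightarrow \Omega(X \setminus \{x\})$, $V \mapsto V \setminus \{x\}$, which is always surjective. It fails to be injective precisely when there exist distinct opens $V \neq W$ with $V \setminus \{x\} = W \setminus \{x\}$, and such a pair exists exactly when some open neighborhood $V$ of $x$ has $V \setminus \{x\}$ open (take $W = V \setminus \{x\}$). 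Hence the dualization is an isomorphism iff $X$ fails $T_D$ at $x$, and the global negation is precisely (2).

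The most delicate equivalence is (1) $\Leftrightarrow$ (3), and it is where I expect the main obstacle. Following \cite{picadopultr2011frames}, the inclusion $\Omega'(Y) \subseteq \Omega'(Z)$ should be read as the assertion that the kernel congruence of the restriction $\Omega(X) \twoheadrightarrow \Omega(Z)$ is contained in that of $\Omega(X) \twoheadrightarrow \Omega(Y)$. Once this variance is fixed, the forward implication is clean: pick $x \in Y \setminus Z$ and a locally closed presentation $\{x\} = U \cap F$ supplied by $T_D$; then the pair $(U, U \setminus \{x\})$ is identified by the $Z$-congruence (the two opens agree off $\{x\}$, and $x \notin Z$) but separated by the $Y$-congruence (they differ precisely at $x \in Y$), witnessing $\Omega'(Y) \not\subseteq \Omega'(Z)$. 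The reverse direction reduces to the analysis for (2) by specializing to $Y = X$ and $Z = X \setminus \{x\}$: if $X$ is not $T_D$ at $x$, the $X \setminus \{x\}$-congruence degenerates to the identity, giving $\Omega'(X) \subseteq \Omega'(X \setminus \{x\})$ while $X \not\subseteq X \setminus \{x\}$. The hard part is bookkeeping rather than conceptual --- keeping track of the variance and the precise meaning of $\Omega'$ is fiddly --- but once the correct framework is in place, the same locally closed witness used for (2) carries through.
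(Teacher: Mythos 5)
Your proof is correct. Note that the paper itself offers no proof of this proposition --- it is stated in the Preliminaries as a known result recalled from Picado--Pultr --- so there is nothing to compare against; your argument, pivoting on the observation that $X$ is $T_D$ at $x$ iff $\{x\}$ is locally closed, is the standard one, and all the steps check out: the variance in (3) is handled correctly (sublocale inclusion $\Om'(Y)\se \Om'(Z)$ corresponds to the reverse inclusion of kernel congruences), the witness pair $(U,\,U\sm\{x\})$ does what you claim, and the reduction of the converse of (3) to the analysis of (2) via $Y=X$, $Z=X\sm\{x\}$ is sound.
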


One can exhibit the role of the axiom $T_D$ in point-free topology by comparing it to sobriety. This has been done in detail in \cite{banaschewskitd}. In particular, in the context of $T_0$ spaces, the two axioms mirror each other in that sober spaces are maximal in the same sense in which \donotbreakdash{$T_D$}spaces are minimal.

\begin{itemize}
    \item A space $X$ is sober if and only if we can never have a nontrivial subspace inclusion $X\inclu Y$ such that its dualization is an isomorphism.
    \item A space $X$ is $T_D$ if and only if we can never have a nontrivial subspace inclusion $Y\inclu X$ such that its dualization is an isomorphism. 
\end{itemize}
Comparing the two axioms with respect to their implications for subspaces and sublocales yields the following. First, define $\spa[\mf{S}(\Om(X))]$ as being the ordered collection of spatial sublocales of $\Om(X)$. We always have a map $\Om'\colon\ca{P}(X)\ra \spa[\mf{S}(\Om(X))]$, which to each subspace of $X$ assigns the sublocale that it induces on $\Om(X)$. The following is well-known:
\begin{itemize}
    \item A space $X$ is sober if and only the map $\Om'\colon\ca{P}(X)\ra \spa[\mf{S}(\Om(X))]$ is a surjection.
    \item A space $X$ is $T_D$ if and only if the map $\Om'\colon\ca{P}(X)\ra \spa[\mf{S}(\Om(X))]$ is an injection.
\end{itemize}

A frame $L$ is said to be  $T_D$-\emph{spatial} if  $L\cong\Omega(X)$ for some $T_D$-topological space $X$.  A \emph{covered prime} of $L$ is a $p\ne 1$ such that whenever $p=\bigwedge_i x_i$ for some $\{x_i\}_{i\in I}\subseteq L$, then there is an $i\in I$ with $p=x_{i}$. In general, it is not true that localic maps map covered primes into covered primes. Following  \cite{banaschewskitd} a frame homomorphism $f$ will be said to be a \emph{$D$-homomorphism} if its associated localic map $f_*$ sends covered primes into covered primes. The subset  $\pt_D(L)$ will denote the set of covered primes  of $L$, and we will refer to it as the \emph{$T_D$ spectrum} of $L$. For each $a\in L$, set $\Sigma'_a=\set{p\in \pt_D(L)\mid a\not\leq p}$. Then, the family $\set{ \Sigma'_a\mid a\in L}$ is a topology on $\pt_D(L)$, and in fact $(\pt_D(L),\set{\Sigma'_a\mid a\in L})$ is a \donotbreakdash{$T_D$}space  \cite{banaschewskitd}.

Let $\cat{Top}_D$ be the full subcategory of $\cat{Top}$ consisting of \donotbreakdash{$T_D$}spaces and let $\cat{Frm}_D$ be the non-full subcategory of $\cat{Frm}$ whose  morphisms are the $D$-homomorphisms.
In  \cite{banaschewskitd}, the authors showed that the assignment $\Sigma'(L)=(\pt_D(L),\set{\Sigma'_a\mid a\in L})$ yields an adjunction 

\[
\begin{tikzcd}[row sep=huge, column sep=huge, text height=1.5ex, text depth=0.25ex]
 \cat{Top}_D \arrow[bend left=20]{r}[name=a]{\cat{\Omega}} &\cat{Frm}_{D}^{op}  \arrow[bend left=20]{l}[name=b]{\Sigma'} \arrow[phantom, from=a, to=b]{}{\ \bot}
\end{tikzcd}
\]

Analogously to the case of the classical spectrum, a frame $L$ is \donotbreakdash{$T_D$}spatial iff $L\cong \Omega (\pt_D(L))$, and there is always a surjective frame homomorphism (the counit of the adjunction) $L\twoheadrightarrow \Omega(\pt_D(L))$ sending $a$ to $\Sigma'_a$. We will refer to $\Omega(\pt_D(L))$ as the \emph{\donotbreakdash{$T_D$}spatialization} of $L$.

\section{$T_D$-spatiality and strong $T_D$-spatiality}
Among the matters that we will tackle in this paper there is that of studying the system of all \donotbreakdash{$T_D$}spatial sublocales of a frame. This is why we begin with an analysis of \donotbreakdash{$T_D$}spatiality, and of a natural strengthening of this property which we will call ``strong \donotbreakdash{$T_D$}spatiality". We recall that a frame $L$ is \donotbreakdash{$T_D$}spatial if it is isomorphic to the frame of opens of the subspace of its prime spectrum consisting of the covered primes. A natural strengthening of this condition is the following. We say that a frame $L$ is \emph{strongly \donotbreakdash{$T_D$}spatial} if it is spatial and all its primes are covered. Thus, a strongly \donotbreakdash{$T_D$}spatial is isomorphic to the frame of opens of its $T_D$ spectrum, and furthermore its $T_D$ spectrum coincides with the classical (sober) spectrum. In this section we will characterize both these conditions on a frame; in particular we will show that a frame is \donotbreakdash{$T_D$}spatial if and only if the Booleanization of its assembly is spatial, and that a frame is strongly \donotbreakdash{$T_D$}spatial if and only if the Booleanization of its assembly coincides with its spatialization. 

\begin{lemma}[{\cite[Proposition~10.2]{remainders}}]\label{a'}
For a frame $L$ and a prime $p\in \pt(L)$ we have that $\bl(p)$ is complemented in $\mf{S}(L)$ if and only if $p$ is a covered prime.
\end{lemma}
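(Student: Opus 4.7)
I would handle the two implications separately. For the ``if'' direction, suppose $p$ is a covered prime. Since $p \neq 1$ and coveredness forbids $p$ from being the meet of elements strictly exceeding it, the element $q := \bwe\{x \in L : x > p\}$ satisfies $q > p$ and is in fact the unique immediate successor of $p$ in $L$. I would then verify the identity $\bl(p) = \cl(p) \cap \op(q)$, which exhibits $\bl(p)$ as a locally closed sublocale. The nontrivial inclusion $\bl(p) \se \cl(p) \cap \op(q)$ amounts to the computation $q \to p = p$, which follows from meet-primality: since $q \not\leq p$, any $y$ with $y \wedge q \leq p$ must satisfy $y \leq p$. The reverse inclusion uses that any $a \geq p$ with $a = q \to a$ is either $p$ itself, or satisfies $a \geq q$, in which case $q \to a = 1$ forces $a = 1$. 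Since locally closed sublocales are complemented in the coframe $\SL$, this exhibits $\bl(p)$ as complemented, with explicit complement $\op(p) \vee \cl(q)$.

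For the ``only if'' direction, suppose $T$ is a complement of $\bl(p)$ in $\SL$. From $T \vee \bl(p) = L$, every $a \in L$ is a meet of elements from $T \cup \{1, p\}$. The observation I would extract is: if $a \not\leq p$, then such a representation cannot involve $p$ (else the meet would fall below $p$), so $a$ is a meet of elements of $T$, and hence $a \in T$ by meet-closure of the sublocale $T$. Given any family $X \se L$ with $p = \bwe X$ and $p \notin X$, every $x \in X$ strictly exceeds $p$, so $X \se T$ by the previous observation; this forces $p = \bwe X \in T \cap \bl(p) = \{1\}$, contradicting $p$ being a prime. Therefore $p$ is covered.

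The main technical step, and the only place where the primality of $p$ is essentially used, is the identity $q \to p = p$ in the ``if'' direction; the rest is essentially bookkeeping with the coframe structure of $\SL$, together with the standard fact that locally closed sublocales are complemented. I would double-check in particular that the coframe join $T \vee \bl(p)$ really does consist of meets of subsets of $T \cup \bl(p)$ (which is the explicit description given earlier in the preliminaries), since the bookkeeping in the second paragraph rests entirely on this.
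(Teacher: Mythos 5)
Your proof is correct and complete in both directions. The paper itself gives no proof of this lemma --- it is quoted from \cite[Proposition~10.2]{remainders} --- so there is nothing internal to compare against, but your argument is the standard one: for the ``if'' part you exhibit $\mathfrak{b}(p)$ as the locally closed sublocale $\mathfrak{c}(p)\cap\mathfrak{o}(q)$ where $q$ is the unique cover of $p$ (the key computation $q\to p=p$ via meet-primality is right), and for the ``only if'' part you correctly exploit the explicit description of the join $T\vee\mathfrak{b}(p)$ as meets of subsets of $T\cup\{1,p\}$ together with meet-closure of $T$ and $T\cap\mathfrak{b}(p)=\{1\}$. The one point worth making explicit is that $1\in T$ (the empty meet), so that $A\subseteq T\cup\{1\}$ really does give $\bigwedge A\in T$; with that noted, the argument is airtight.
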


From the fact that $\mf{S}_b(L)$ is the collection of joins of sublocales complemented in $\SL$, we deduce the following corollary.

\begin{corollary}\label{a}
For a frame $L$ and a prime $p\in \pt(L)$ we have that $\bl(p)\in \mf{S}_b(L)$ if and only if $\bl(p)$ is a covered prime. Then, for a frame $L$ and for $\{p_i\}_{i\in I}$ a collection of covered primes, we have that all joins of the form $\bve_i \bl(p_i)$ are in $\mf{S}_b(L)$.
\end{corollary}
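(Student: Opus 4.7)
The plan is to combine the characterization of $\mf{S}_b(L)$ recalled in the preliminaries, namely that $\mf{S}_b(L)$ consists precisely of the joins (taken in $\SL$) of sublocales complemented in $\SL$, with Lemma~\ref{a'}. One direction becomes immediate and the other reduces to analysing what joins of complemented sublocales inside a one-point sublocale can look like.

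For the nontrivial direction, suppose that $\bl(p)\in\mf{S}_b(L)$. Then there exist complemented sublocales $\{C_j\}_{j\in J}$ in $\SL$ such that $\bl(p)=\bve_{j\in J}C_j$. Each $C_j$ lies below the join, so $C_j\se\bl(p)=\{1,p\}$. The key observation is that when every summand is contained in the two-element set $\{1,p\}$, the coframe join collapses to the set-theoretic union: indeed, the only nontrivial binary meet available is $1\we p=p$, so the set $\bcu_j C_j$ is already closed under meets in $L$, and by the formula for joins in $\SL$ given in the preliminaries one has $\bve_j C_j=\bcu_j C_j$. Hence $\bcu_j C_j=\{1,p\}$, which forces some $C_{j_0}=\bl(p)$. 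Thus $\bl(p)$ is itself complemented in $\SL$, and Lemma~\ref{a'} yields that $p$ is a covered prime.

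The converse is immediate: if $p$ is covered then Lemma~\ref{a'} tells us that $\bl(p)$ is complemented in $\SL$, and every complemented sublocale is trivially a join of complemented sublocales, so $\bl(p)\in\mf{S}_b(L)$. For the final assertion about arbitrary joins $\bve_{i\in I}\bl(p_i)$ of one-point sublocales associated with covered primes, each $\bl(p_i)$ is complemented by Lemma~\ref{a'}, so the expression $\bve_{i\in I}\bl(p_i)$ is by definition a join of complemented sublocales of $L$ and therefore belongs to $\mf{S}_b(L)$. No real obstacle is expected; the only subtle point is the collapse of the coframe join to the set-theoretic union inside $\{1,p\}$, which has to be pointed out explicitly because joins in $\SL$ are not usually unions.
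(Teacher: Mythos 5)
Your proof is correct and follows essentially the same route as the paper, which simply deduces the corollary from Lemma~\ref{a'} together with the characterization of $\mf{S}_b(L)$ as the joins of complemented sublocales, leaving the details implicit. Your explicit verification that a join of sublocales contained in $\{1,p\}$ collapses to the set-theoretic union (so that some summand must equal $\bl(p)$) is exactly the point the paper glosses over, and it is handled correctly.
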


We recall that a frame $L$ is spatial if and only if every element is a meet of primes. We have the following analogous characterization of \donotbreakdash{$T_D$}spatiality.

\begin{lemma}\label{me}
A frame is \donotbreakdash{$T_D$}spatial if and only if all its elements are meets of covered primes.
\end{lemma}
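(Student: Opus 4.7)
The plan is to reduce the statement to the injectivity of the counit map $c\colon L\twoheadrightarrow \Om(\pt_D(L))$, $a\mapsto \Sigma'_a$. As recalled in the Preliminaries, $c$ is always a surjective frame homomorphism, so $L$ is \donotbreakdash{$T_D$}spatial if and only if $c$ is additionally injective. I would then show that injectivity of $c$ is equivalent to the recovery formula $a = \bigwedge \{p\in\pt_D(L)\mid a\leq p\}$ holding for every $a\in L$, which is clearly equivalent to the statement that every element is a meet of covered primes.

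For the forward direction, assuming $L$ is \donotbreakdash{$T_D$}spatial, I would fix $a\in L$ and set $a':=\bigwedge \{p\in\pt_D(L)\mid a\leq p\}$. Since $a\leq a'$ trivially, one gets $\Sigma'_a\subseteq\Sigma'_{a'}$; for the reverse inclusion, if $a'\not\leq p$ for some covered prime $p$, then $p$ cannot belong to the set indexing the meet defining $a'$, so $a\not\leq p$. Thus $\Sigma'_a=\Sigma'_{a'}$, and injectivity of $c$ forces $a=a'$, exhibiting $a$ as a meet of covered primes.

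For the reverse direction, assume every element of $L$ is a meet of covered primes, and suppose $\Sigma'_a = \Sigma'_b$, i.e., the sets $P_a$ and $P_b$ of covered primes above $a$ and above $b$ coincide. The key small observation is that if $a$ is the meet of \emph{some} family of covered primes, then it is automatically the meet of \emph{all} covered primes in $P_a$: the given family is contained in $P_a$, while every element of $P_a$ bounds $a$ from above. Hence $a=\bigwedge P_a=\bigwedge P_b=b$, so $c$ is injective and $L$ is \donotbreakdash{$T_D$}spatial.

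I do not expect any genuine obstacle: the argument is a direct transcription of the classical ``spatial iff every element is a meet of primes'' proof, with covered primes replacing primes and the \donotbreakdash{$T_D$}spatialization counit replacing the ordinary spatialization counit; the only ingredient beyond elementary manipulations is the Banaschewski--Pultr fact, cited in the Preliminaries, that $c$ is always a surjection.
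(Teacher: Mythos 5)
Your proof is correct, and your converse is essentially the paper's (the paper likewise just observes that the hypothesis makes the $T_D$-spatialization surjection one-to-one, hence an isomorphism onto the open-set frame of the $T_D$ space $\pt_D(L)$). Your forward direction, however, takes a genuinely different route. The paper argues concretely: writing $L=\Om(X)$ for a $T_D$ space $X$, every open $U$ equals $\bigwedge_{x\notin U}\bigl(X\sm\overline{\{x\}}\bigr)$, and each $X\sm\overline{\{x\}}$ is a \emph{covered} prime precisely because $X$ is $T_D$; no appeal to the counit is needed. You instead reduce both directions to injectivity of the counit $c\colon L\twoheadrightarrow\Om(\pt_D(L))$ and run the order-theoretic argument with the sets $P_a$ of covered primes above $a$. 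The one point to watch: the Preliminaries only record that $L$ is $T_D$-spatial iff $L\cong\Om(\pt_D(L))$ and that $c$ is always surjective; your forward direction needs the slightly stronger fact that $T_D$-spatiality forces the counit \emph{itself} to be injective, and an abstract isomorphism $L\cong\Om(\pt_D(L))$ does not formally imply that a given surjection onto $\Om(\pt_D(L))$ is one-to-one. The implication is true --- the Banaschewski--Pultr adjunction is idempotent, since the unit at a $T_D$ space is a homeomorphism, so the counit at any $T_D$-spatial frame is an isomorphism --- but that is exactly the step the paper's direct computation inside $\Om(X)$ avoids. The trade-off is clear: your argument is a uniform transcription of the classical ``spatial iff every element is a meet of primes'' proof and generalizes mechanically, while the paper's is self-contained given only the $T_D$ axiom and the definition of covered prime.
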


\begin{proof}
For the ``only if'' part, if $L=\Omega(X)$ with $X$ a \donotbreakdash{$T_D$}space and $U$ is an open, then $U=\bwe_{x\not\in U} X-\overline{\{x\}}$, where each  $X-\overline{\{x\}}$ is covered. Conversely, it is clear that if every element is a meet of covered primes, then the \donotbreakdash{$T_D$}spatialization surjection $L\twoheadrightarrow \Omega(\pt_D(L))$ defined in the Preliminaries is also one-to-one and hence an isomorphism. Since $\pt_D(L)$ is a \donotbreakdash{$T_D$}space, then $L$ is \donotbreakdash{$T_D$}spatial.
\end{proof}

We are now ready to prove our characterization theorem for \donotbreakdash{$T_D$}spatiality.

\begin{theorem}\label{ss}
For a frame $L$, the following are equivalent:
\begin{enumerate}[\normalfont(1)]
    \item $L$ is \donotbreakdash{$T_D$}spatial;
    \item Every element of $L$ is a meet of covered primes;
    \item $\SLb$ is spatial;
    \item $\SLb\cong \ca{P}(\pt_D(L))$.
\end{enumerate}
\end{theorem}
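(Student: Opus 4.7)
The equivalence (1)$\Leftrightarrow$(2) is exactly Lemma~\ref{me}, so only the interplay between (2), (3) and (4) must be established.

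The key structural fact underlying the remaining equivalences is that the atoms of the complete Boolean algebra $\SLb$ are precisely the one-point sublocales $\bl(p)$ with $p\in\pt_D(L)$. One direction is immediate from Lemma~\ref{a'} and Corollary~\ref{a}: any $\bl(p)$ with $p$ covered is already a minimal nonzero sublocale of $\SL$ and lies in $\SLb$. For the converse, if $A$ is an atom of $\SLb$, then since $A$ is smooth it must coincide with one of its nontrivial locally-closed summands $\op(a)\cap\cl(b)$; setting $b_A=\bigwedge A$ one checks $A=\op(a)\cap\cl(b_A)$ with $b_A\not\leq a$. Minimality inside $\SLb$ forces $b_A$ to be prime: a nontrivial decomposition $b_A=x\wedge y$ with $x,y>b_A$ would leave at least one of $x,y$, say $x$, not below $a$ (else $b_A\leq a$), and then $\op(a)\cap\cl(x)$ would be a nonzero smooth sublocale strictly inside $A$, as its bottom $a\to x\geq x>b_A$. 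Similarly, if $b_A$ were not a covered prime, then $\bl(b_A)\notin\SLb$ forces $A\neq\bl(b_A)$, so some $c\in A\setminus\{1,b_A\}$ exists, and $\op(a)\cap\cl(c)$ is again a strictly smaller nonzero member of $\SLb$ inside $A$, a contradiction. Once the atoms are identified, (3)$\Leftrightarrow$(4) becomes the standard fact that a complete Boolean algebra is spatial iff it is atomic iff it is isomorphic to the powerset of its atom set.

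For (2)$\Rightarrow$(3) I would directly verify atomicity: given a nonzero $S\in\SLb$, express $S$ as a join of locally closed sublocales and pick a nontrivial summand $\op(a)\cap\cl(b)$; applying (2) to $b$ yields a covered prime $p\geq b$ with $p\not\geq a$, giving $\bl(p)\subseteq\op(a)\cap\cl(b)\subseteq S$. For the converse (3)$\Rightarrow$(2), given $a\in L$ set $a'=\bigwedge\{p\in\pt_D(L):p\geq a\}$. A strict inequality $a<a'$ would make $R=\cl(a)\cap\op(a')$ a nonzero complemented (hence smooth) sublocale, and spatiality would provide some $\bl(p)\subseteq R$ with $p$ covered, i.e.\ $p\geq a$ but $p\not\geq a'$, contradicting the definition of $a'$. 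Hence $a=a'$ and (2) holds.

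The main obstacle is the atom-identification step: one has to exclude both non-prime and non-covered-prime candidates for $b_A$ by exhibiting a strictly smaller smooth sublocale inside an alleged atom. Everything else reduces to short manipulations with locally closed sublocales and bottoms of the form $a\to x$.
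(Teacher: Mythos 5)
Your overall route is genuinely different from the paper's: where the paper obtains (1)$\Leftrightarrow$(3) by citing an external result and then deduces (3)$\Rightarrow$(4) from Corollary~\ref{a}, you give a self-contained argument by identifying the atoms of the complete Boolean algebra $\SLb$ as exactly the sublocales $\bl(p)$ with $p\in\pt_D(L)$, and then reducing (2), (3) and (4) to atomicity. That strategy is sound, and your arguments for (2)$\Rightarrow$(3) and (3)$\Rightarrow$(2) are correct as stated, granting the atom identification.

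The atom-identification step, however, contains a concrete error: the non-triviality criterion for a locally closed sublocale is reversed. The bottom element of $\op(a)\cap\cl(b)$ is $a\to b$, so $\op(a)\cap\cl(b)\ne\{1\}$ if and only if $a\not\leq b$, not $b\not\leq a$. Accordingly, for an atom $A=\op(a)\cap\cl(b_A)$ what one can deduce is $a\not\leq b_A$; your claimed $b_A\not\leq a$ is false in general (take $a=1$, so that $A=\cl(b_A)$; a closed atom then has $b_A\leq a$). In the primality argument, a decomposition $b_A=x\wedge y$ with $x,y>b_A$ yields some $x$ with $a\not\leq x$ (otherwise $a\leq x\wedge y=b_A$), and it is this condition --- not $x\not\leq a$ --- that makes $\op(a)\cap\cl(x)$ nonzero. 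As written, the step ``$\op(a)\cap\cl(x)$ would be a nonzero smooth sublocale'' fails: if $a<x$ then $x\not\leq a$ and yet $a\to x=1$, so the sublocale is trivial. The repair is purely mechanical --- reverse the three inequalities --- after which the argument goes through; note that your coveredness step needs no change, since $c\in A\subseteq\op(a)$ with $c\ne 1$ automatically gives $a\not\leq c$. With that correction, your proof is a valid and more self-contained alternative to the paper's, at the cost of the extra lattice-theoretic work of classifying the atoms.
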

\begin{proof}
The equivalence between (1) and (3) appears in \cite[Theorem~6.2]{Arrieta20} and the equivalence between (1) and (2) follows by Lemma~\ref{me}.
Let us now show that (3) implies (4). If $\SLb$ is spatial, as this is a Boolean algebra it must be isomorphic to the powerset of its primes. By Corollary \ref{a}, there is a bijection $\pt(\SLb)\cong \pt_D(L)$. Finally note that (4) implies (3) is trivial.\qedhere
\end{proof}

We now move on to looking at strong \donotbreakdash{$T_D$}spatiality.

\begin{lemma}\label{d}
A frame $L$ is spatial if and only if the spatialization of $\mf{S}(L)^{op}$ is a dense frame surjection.
\end{lemma}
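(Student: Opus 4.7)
The plan is first to translate density of the spatialization into a concrete condition on sublocales, and then to classify the primes of $\SSS(L)^{op}$. Recall that the spatialization $\SSS(L)^{op}\twoheadrightarrow \Omega(\Sigma(\SSS(L)^{op}))$ is always a frame surjection (it is the counit of the adjunction), sending $S\in\SSS(L)$ to $\Sigma_S=\set{P\in\pt(\SSS(L)^{op})\mid P\not\subseteq S}$; here I have rewritten ``$S\not\leq P$ in $\SSS(L)^{op}$'' as ``$P\not\subseteq S$ in $\SSS(L)$''. Since the bottom element of the frame $\SSS(L)^{op}$ is $L$ itself, denseness amounts to ``$\Sigma_S=\emptyset\Rightarrow S=L$'', which is equivalent to
\[
\bve\pt(\SSS(L)^{op}) = L \quad \text{in } \SSS(L).
\]

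The heart of the proof is the claim that the primes of $\SSS(L)^{op}$ (i.e., the co-primes of the coframe $\SSS(L)$) are exactly the one-point sublocales $\bl(p)$ with $p\in\pt(L)$. One inclusion is easy: $\bl(p)=\set{1,p}$ has only $\set{1}$ and itself as sublocales, and so is join-irreducible in $\SSS(L)$. For the converse, let $P$ be a co-prime of $\SSS(L)$ and pick any $a\in P$ with $a\neq 1$. Since $\op(a)$ and $\cl(a)$ are complements in the coframe $\SSS(L)$, the distributivity of its underlying lattice yields
\[
P \;=\; P\cap(\op(a)\ve \cl(a)) \;=\; (P\cap \op(a))\ve (P\cap \cl(a)).
\]
By co-primality, $P\subseteq\op(a)$ or $P\subseteq\cl(a)$. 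The former is impossible: $a\in\op(a)=\set{b\mid a\to b = b}$ would give $a\to a=a$, but $a\to a=1$, forcing $a=1$. Hence $P\subseteq\cl(a)$, so $a=\bwe P$. As this argument applies to every nontop element of $P$, we conclude $P=\set{1,\bwe P}$, and $\bwe P$ is prime in $L$ (for $\set{1,q}$ is a sublocale precisely when $q$ is prime).

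Combining the two steps,
\[
\bve\pt(\SSS(L)^{op}) \;=\; \bve_{p\in\pt(L)}\bl(p) \;=\; \bigset{\bwe A \mid A\subseteq\pt(L)},
\]
using the formula $\bve_{i\in I} S_i=\set{\bwe A\mid A\subseteq \bigcup_i S_i}$ for joins in $\SSS(L)$, together with the convention that the empty meet equals $1$. This sublocale is all of $L$ precisely when every element of $L$ is a meet of primes, i.e., precisely when $L$ is spatial. The main obstacle is the classification of co-primes of $\SSS(L)$; once this is established the remaining steps are routine translations between the coframe $\SSS(L)$ and its opposite frame $\SSS(L)^{op}$.
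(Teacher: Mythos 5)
Your proof is correct, and it takes a genuinely more self-contained route than the paper's. The paper disposes of the lemma in one line: it identifies the spatialization of $\SL^{op}$ with the nucleus $\mathfrak{M}\circ\pt$ on $\SL^{op}$ (an identification set up in the preliminaries, following \cite{Suarez20}), so that density of the spatialization is precisely the statement that the bottom element $L$ of $\SL^{op}$ is a fixpoint of that nucleus, i.e.\ that $L$ is spatial. You instead unfold the classical definition of the spatialization of the frame $\SL^{op}$, reduce density to the condition $\bve\pt(\SLop)=L$, and then prove from scratch the key fact that the primes of $\SLop$ are exactly the one-point sublocales $\bl(p)$ with $p\in\pt(L)$ --- the very fact that underlies the paper's identification but is only imported there. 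Your argument for that classification (splitting a join-irreducible $P$ along the complemented pair $\op(a)$, $\cl(a)$ for each non-top $a\in P$, and ruling out $P\se\op(a)$ via $a\to a=1$) is sound, and it is essentially the same device the paper deploys later in Lemma~\ref{covprimofSLd} for the coframe $\SLd$. What your approach buys is independence from the machinery of \cite{Suarez20}; what it costs is length, plus a few points of care that you did handle correctly (that $\set{1,q}$ is a sublocale iff $q$ is prime, that finite distributivity of the coframe $\SL$ justifies the splitting, and the empty-meet convention in the final computation of $\bve_{p\in\pt(L)}\bl(p)$).
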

\begin{proof}
A frame $L$ is spatial if and only if $L$ is a fixpoint of the interior $\spa\colon\SL\ra \SL$, that is, if and only if the bottom element $L\in \SL^{op}$ is a fixpoint of the spatialization nucleus $\spa\colon\SL^{op}\ra \SL^{op}$.
\end{proof}
Since $\SSS_b(L)$ is the least dense sublocale of $\SSS(L)$ then one has the following:

\begin{corollary}\label{sp}
For a frame $L$ the following are equivalent:
\begin{enumerate}[\normalfont(1)]
\item $L$ is spatial;
\item $\SSS_b(L)\subseteq \spa[\SSS(L)]$.
\end{enumerate}
\end{corollary}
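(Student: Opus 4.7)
The plan is to deduce this corollary as a direct consequence of Lemma~\ref{d} together with the defining property of $\SSS_b(L)$ as the least dense subcolocale of $\SSS(L)$. First I would invoke Lemma~\ref{d}: $L$ is spatial if and only if the spatialization of $\SSS(L)^{op}$ is a dense frame surjection. Translating via the ``co-'' convention recalled in the Preliminaries (a subcolocale of a coframe $C$ being dense means that its opposite is a dense sublocale of $C^{op}$, i.e. that it contains the top of $C$), this condition is equivalent to saying that the subcolocale $\spa[\SSS(L)]$ of $\SSS(L)$ is dense.

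At this point the work is essentially done: since $\SSS_b(L)$ is the Booleanization of $\SSS(L)$, hence the least dense subcolocale of $\SSS(L)$, a subcolocale $S\subseteq \SSS(L)$ is dense precisely when $\SSS_b(L)\subseteq S$. Applying this to $S=\spa[\SSS(L)]$ gives the equivalence (1)~$\Leftrightarrow$~(2). There is no real obstacle in the argument; the corollary is mostly a repackaging of Lemma~\ref{d} in terms of the inclusion of the Booleanization subcolocale into the spatialization subcolocale, and the only point worth being explicit about is the compatibility of the frame-theoretic statement of Lemma~\ref{d} with the coframe-theoretic formulation of~(2).
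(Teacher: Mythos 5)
Your proof is correct and follows essentially the same route as the paper: the corollary is deduced from Lemma~\ref{d} together with the characterization of $\SSS_b(L)$ as the least dense subcolocale of $\SSS(L)$, so that density of $\spa[\SSS(L)]$ is equivalent to the inclusion $\SSS_b(L)\subseteq\spa[\SSS(L)]$. The paper gives no further argument beyond this observation, so nothing is missing.
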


We may now prove the characterization theorem for strong \donotbreakdash{$T_D$}spatiality.
 \begin{theorem}\label{ssss}
For a frame $L$, the following are equivalent:
\begin{enumerate}[\normalfont(1)]
\item $L$ is strongly \donotbreakdash{$T_D$}spatial;
\item Every element of $L$ is a meet of covered primes and every prime of $L$ is covered;
\item The frame $L$ is spatial and $\pt(L)$ is a \donotbreakdash{$T_D$}space;
\item The frame $L$ is isomorphic to $\Om(X)$ for some sober \donotbreakdash{$T_D$}space $X$;
\item The frame $L$ is spatial and we have an isomorphism $\spa[\SL]\cong \ca{P}(\pt(L))$;
\item $\mf{S}_b(L)=\spa[\mf{S}(L)]$.
\end{enumerate}
\end{theorem}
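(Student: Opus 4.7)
The plan is to organise the equivalences into three groups. The first cluster, (1)--(4), is essentially definitional. Conditions (1) and (2) coincide tautologically: spatiality means every element is a meet of primes, so adding ``every prime is covered'' turns this into ``every element is a meet of covered primes''. The equivalence (1)$\Leftrightarrow$(3) uses that $\pt_D(L)$ is always $T_D$ (as recalled in the preliminaries); hence every prime being covered makes $\pt(L)=\pt_D(L)$ a $T_D$-space, and conversely if $\pt(L)$ is $T_D$, every prime of $L$, being represented by a point of $\pt(L)$, is covered. Finally (3)$\Leftrightarrow$(4) is immediate from the sobriety of $\pt(L)$: if $L\cong \Omega(X)$ with $X$ sober and $T_D$, then $X\cong \pt(L)$, which is consequently $T_D$.

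For (3)$\Leftrightarrow$(5), recall from the $T_D$-preliminaries that for any space $X$, the map $\Omega'\colon \ca{P}(X) \to \spa[\mf{S}(\Omega(X))]$ is injective iff $X$ is $T_D$ and surjective iff $X$ is sober. Taking $X=\pt(L)$, sobriety is automatic, and spatiality of $L$ gives $\SL\cong \mf{S}(\Omega(\pt(L)))$. Hence $\spa[\SL]\cong \ca{P}(\pt(L))$ precisely when $\pt(L)$ is $T_D$, which is exactly~(3).

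The main substance lies in (1)$\Leftrightarrow$(6). Assuming (1), Corollary~\ref{sp} already provides $\SSS_b(L)\subseteq \spa[\SSS(L)]$, so only the reverse inclusion remains. I would write any spatial sublocale $S$ as the join in $\SL$ of its one-point sublocales, $S = \bve_{p\in\pt(S)} \bl(p)$; this join is the meet-closure of $\pt(S)\cup\{1\}$ in $L$, which is precisely the spatialization $\mathfrak{M}(\pt(S))$ of $S$. Since every prime of $L$ is covered, Corollary~\ref{a} gives $\bl(p)\in \SSS_b(L)$ for each $p$, and closure of $\SSS_b(L)$ under arbitrary joins in the coframe $\SSS(L)$ (recall that $\SSS_b(L)$ is a subcolocale) then puts $S$ in $\SSS_b(L)$. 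Conversely, if (6) holds, then $\SSS_b(L) \subseteq \spa[\SSS(L)]$ forces $L$ spatial by Corollary~\ref{sp}, while for each prime $p\in\pt(L)$ the one-point sublocale $\bl(p)$ is trivially spatial, so $\bl(p) \in \spa[\SSS(L)]=\SSS_b(L)$ and $p$ is covered by Corollary~\ref{a}.

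The one delicate point, and the main obstacle to articulate carefully, is the identification of the join $\bve_p \bl(p)$ in $\SSS(L)$ with the spatial sublocale $S$ itself; once this decomposition is in place, closure of the subcolocale $\SSS_b(L)$ under coframe joins transfers ``every prime of $L$ is covered'' directly into the inclusion $\spa[\SSS(L)] \subseteq \SSS_b(L)$.
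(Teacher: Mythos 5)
Your architecture differs from the paper's: the paper runs a single cycle (1)$\Rightarrow$(2)$\Rightarrow$(3)$\Rightarrow$(4)$\Rightarrow$(5)$\Rightarrow$(6)$\Rightarrow$(1), whereas you prove a cluster of biconditionals hubbed on (1) and (3). Most of this is correct. In particular your direct proof of (1)$\Leftrightarrow$(6) --- writing a spatial sublocale as $S=\bigvee_{p\in\pt(S)}\bl(p)=\mathfrak{M}(\pt(S))$ and using Corollary~\ref{a} together with closure of $\SSS_b(L)$ under joins in $\SSS(L)$ --- is a sound and more self-contained alternative to the paper's route, which only obtains $\spa[\SSS(L)]\subseteq\SSS_b(L)$ indirectly in the step (5)$\Rightarrow$(6), by noting that (5) makes $\spa[\SSS(L)]$ a Boolean subcolocale which is dense by Lemma~\ref{d} and hence must be the Booleanization. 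Your (6)$\Rightarrow$(1) coincides with the paper's.

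The one genuine gap is the backward direction of your (3)$\Leftrightarrow$(5). Condition (5) asserts only the existence of \emph{some} isomorphism $\spa[\SSS(L)]\cong\mathcal{P}(\pt(L))$, and the paper's own proof treats it as an abstract isomorphism. The facts you invoke --- $\Omega'$ is injective iff $X$ is $T_D$ and surjective iff $X$ is sober --- are statements about the \emph{canonical} map $\Omega'\colon\mathcal{P}(X)\to\spa[\mf{S}(\Omega(X))]$; an abstract order-isomorphism between its codomain and $\mathcal{P}(\pt(L))$ does not by itself tell you that $\Omega'$ is injective, so the ``precisely when'' is unjustified as written. The conclusion is true, but it needs an additional argument: either the paper's (the abstract isomorphism makes $\spa[\SSS(L)]$ Boolean, spatiality makes it dense, hence $\spa[\SSS(L)]=\SSS_b(L)$, and Corollary~\ref{a} then forces every prime to be covered), or an atomicity argument showing that the atoms of $\spa[\SSS(L)]$ are exactly the sublocales $\bl(p)$, so that being a complete atomic Boolean algebra forces the atom-join map $Y\mapsto\mathfrak{M}(Y)$, and hence $\Omega'$, to be injective. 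Without one of these, your network of equivalences does not establish that (5) implies the remaining conditions.
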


\begin{proof}
(1)$\implies$(2). This follows from Lemma~\ref{me}. \\[2mm]
(2)$\implies$(3). If every element of $L$ is a meet of primes, the frame $L$ is spatial. Furthermore, if all primes of $L$ are covered we have that $\pt(L)$ is a \donotbreakdash{$T_D$}space, as it coincides with the $T_D$ spectrum of $L$.\\[2mm]
(3)$\implies$(4). This is clear as $\pt(L)$ is always sober.\\[2mm]
(4)$\implies$(5). We recall that if a space $X$ is $T_D$ then the map $\Om'\colon\ca{P}(X)\ra \spa[\mf{S}(\Om(X))]$ is injective, while if it is sober it is surjective (see the subsection on the $T_D$ axiom in the preliminaries). Thus, if $L\cong \Om(X)$ for some $T_D$ and sober space $X$ we have an isomorphism $\ca{P}(\pt(L))\cong \spa[\mf{S}(\pt(L))]$.\\[2mm]
(5)$\implies$(6). If the frame $L$ is spatial and we have an isomorphism $\spa[\SL]\cong \ca{P}(\pt(L))$, then in particular the subcolocale $\spa[\SL]\se \SL$ is Boolean. As $L$ is spatial it is also dense (see Lemma~\ref{d}), and so it must be its Booleanization.\\[2mm]
(6)$\implies$(1). By Corollary \ref{sp}, $L$ is spatial. Let $p$ be a prime of $L$. Then  $\mathfrak{b}(p)\in \spa[\SSS(L)]\subseteq \SSS_b(L)$, and so by Corollary \ref{a} it follows that $p$ is covered.\qedhere
\end{proof}

\section{$D$-sublocales}
We now investigate the notion equivalent to that of sublocale in the \donotbreakdash{$T_D$}duality; the notion of \emph{\donotbreakdash{$D$}sublocale}. For a sublocale to count as a \donotbreakdash{$D$}sublocale, we will require that its corresponding localic inclusion is a morphism in the category $\mathsf{Loc}_D$. This constraint amounts to the condition that the localic map ought to send covered primes to covered primes. In this section we will analyze the collection $\SLd\se \SL$ of \donotbreakdash{$D$}sublocales of a frame $L$. In particular, we will show that $\SLd\se \SL$ is always a dense subcolocale. We then will explore the question of how close the assignment $\SLd(-)\colon\mf{Obj}(\mathsf{Frm})\ra \mf{Obj}(\mathsf{Frm})$ is to being functorial.
It is known that localic maps do not generally send covered primes into covered primes; those frame homomorphisms such that their right adjoint localic maps send covered primes into covered primes   were referred to as $D$-homorphisms in \cite{picadopultr2011frames}.
Now we introduce one of the central notions in this paper: a sublocale $S$ of $L$ will be said to be a $D$-\emph{sublocale} if the corresponding frame surjection $L\twoheadrightarrow S$ is a $D$-homomorphism, i.e. iff the equality $\pt_D(S)=\pt_D(L)\cap S$ (equivalently the inclusion $\pt_D(S)\subseteq \pt_D(L))$ holds.

In general, not every sublocale is a \donotbreakdash{$D$}sublocale, and neither is an intersection of two \donotbreakdash{$D$}sublocales, as the following example shows:

\begin{remark}
One of the simplest examples of an intersections of two \donotbreakdash{$D$}sublocales which is not a \donotbreakdash{$D$}sublocale seems to be the following: let $L=[0,1]$ (the unit interval with its usual total order).  A subset of a totally ordered set is a sublocale iff it is closed under meets, so the following two subsets are indeed sublocales:
$$S=\set{0,1}\cup \bigcup_{n\in \mathbb{N}} \bigg[\frac{1}{2n},\frac{1}{2n-1}\bigg)\quad\textrm{and}\quad T=\set{0,1}\cup \bigcup_{n\in \mathbb{N}} \bigg[\frac{1}{2n+1},\frac{1}{2n}\bigg).$$
Obviously, for every $x\ne 1$ in $S$, one has $x=\bigwedge_{x<t\in S} t$, and this shows that $\pt_D(S)=\varnothing$, so $S$ is a \donotbreakdash{$D$}sublocale of $L$. Similarly, $\pt_D(T)=\varnothing$ and so it is also a \donotbreakdash{$D$}sublocale of $L$. Now, observe that $S\cap T=\set{0,1}$ and then one has $\pt_D(S\cap T)=\pt(S\cap T)=\{0\}$. But $0$ is not a covered prime in $L$ as $0=\bigwedge_{t>0} t$. Hence $S\cap T$ is not a \donotbreakdash{$D$}sublocale.
\end{remark}

Let $\mf{S}_D(L)$ be the subset of  $\mf{S}(L)$ consisting of \donotbreakdash{$D$}sublocales of $L$, equipped with the inherited order. We observe that we have the following.
$$\mf{S}_D(L) = \set{ S\in \SSS(L)\mid \pt_D(S)\subseteq \pt_D(L)}.$$
In what follows, we investigate the structure of $\SSS_D(L)$:

\begin{proposition}\label{jjjj}
The system $\SSS_D(L)$ is closed under arbitrary joins in $\SSS(L)$.
\end{proposition}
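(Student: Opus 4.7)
The plan is to unfold the characterization $\SLd = \{S \in \SL : \pt_D(S) \se \pt_D(L)\}$. Given a family $\{S_i\}_{i \in I} \se \SLd$ with join $S := \bve_i S_i$ taken in $\SL$, I would pick an arbitrary $p \in \pt_D(S)$ and aim to show $p \in \pt_D(L)$. The overall strategy is to locate $p$ inside one specific $S_{i_0}$ and then invoke the $D$-sublocale hypothesis on that single sublocale.

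First I would use the explicit formula for joins in $\SL$ recalled in the preliminaries, namely $\bve_i S_i = \{\bwe A : A \se \bcu_i S_i\}$. Write $p = \bwe A$ with $A \se \bcu_i S_i \se S$. Since $p$ is a covered prime of $S$ and $A$ is a family in $S$ whose meet is $p$, this representation forces $p = a$ for some $a \in A$, and that $a$ lies in some $S_{i_0}$; in particular $p \in S_{i_0}$.

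The next step verifies that $p$ is also a covered prime of $S_{i_0}$: any representation $p = \bwe_{j \in J} x_j$ with $x_j \in S_{i_0}$ is automatically a meet representation inside $S$, since $S_{i_0} \se S$, so the covered-prime property of $p$ in $S$ yields some $j \in J$ with $p = x_j$. Hence $p \in \pt_D(S_{i_0})$, and the assumption $S_{i_0} \in \SLd$ then gives $p \in \pt_D(L)$, as required.

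I do not anticipate any real obstacle beyond one subtle point worth flagging: covered primality is not inherited downwards in general (shrinking to a smaller sublocale can create new meet representations that witness failure of the property), so one cannot simply intersect an arbitrary covered prime of $S$ with each $S_i$. The reason the argument works is that the concrete description of joins in $\SL$ forces $p$ to literally belong to some $S_{i_0}$; the downward passage from $S$ to $S_{i_0}$ is then legitimate, precisely because every meet representation of $p$ inside $S_{i_0}$ counts as one inside $S$.
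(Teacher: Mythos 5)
Your argument is correct and is essentially the same as the paper's: both use the explicit formula for joins in $\SSS(L)$ to write $p=\bigwedge A$ with $A\subseteq\bigcup_i S_i$, use coveredness of $p$ in the join to force $p\in S_{i_0}$ for some $i_0$, and then observe that any meet representation of $p$ inside $S_{i_0}$ is one inside the join, so that $p\in\pt_D(S_{i_0})\subseteq\pt_D(L)$. The subtlety you flag (that coveredness does not pass down to arbitrary smaller sublocales, and that the argument works only because $p$ literally lands in some $S_{i_0}$) is exactly the point the paper's proof exploits.
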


\begin{proof}
Let $\{S_i\}_{i\in I}\subseteq \SSS_D(L)$, i.e. $\pt_D(S_i)\subseteq \pt_D(L)$ for all $i\in I$.  We have to check that $\bigvee_i S_i\in \SSS_D(L)$, that is, $\pt_D(\bigvee_i S_i)\subseteq \pt_D(L)$. Let then $p\in \pt_D(\bigvee_i S_i)$. Since $p\in \pt_D(\bigvee_i S_i)\subseteq \bigvee_i S_i$, there is a family $\{a_i\}_{i\in I}\subseteq L$ such that $a_i\in S_i$ for each $i\in I$ and with $p=\bigwedge_i a_i$. Since for all $i\in I$ one has $a_i\in S_i\subseteq \bigvee_i S_i$ and $p\in \pt_D(\bigvee_i S_i)$, then there is an $i_0\in I$ such that $p=a_{i_0}\in S_{i_0}$.  We claim that $p\in \pt_D(S_{i_0})$. Indeed, let $\{b_j\}_{j\in J}\subseteq S_{i_0}$ a family such that $p=\bigwedge_j b_j$. Then $b_j\in S_{i_0}\subseteq \bigvee_i S_i$ and since $p\in \pt_D(\bigvee_i S_i)$, there is a $j_0\in J$ with $p=b_{j_0}$. Then $p\in \pt_D(S_{i_0})\subseteq \pt_D(L)$, as desired. 
\end{proof}

\begin{proposition}\label{impr}
If $S\in \SSS_D(L)$ and $T\in \SSS(L)$, then $S\setminus T\in \SSS_D(L)$.
\end{proposition}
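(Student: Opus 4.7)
The plan is to prove the slightly stronger statement $\pt_D(S\setminus T)\subseteq\pt_D(S)$; the conclusion then follows immediately because $S$ is assumed to be a $D$-sublocale, giving $\pt_D(S\setminus T)\subseteq\pt_D(S)\subseteq\pt_D(L)$. So I would fix $p\in\pt_D(S\setminus T)$ and an arbitrary representation $p=\bigwedge_{i\in I} a_i$ with $a_i\in S$, and aim to show that $p=a_{i_0}$ for some $i_0\in I$.

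The key step is to exploit the inclusion $S\subseteq T\vee (S\setminus T)$, which comes directly from the defining adjunction of the co-Heyting difference. Using the explicit formula $T\vee(S\setminus T)=\{\bigwedge A\mid A\subseteq T\cup(S\setminus T)\}$, each $a_i$ can be written $a_i=t_i\wedge b_i$ for some $t_i\in T$ and $b_i\in S\setminus T$ (by splitting the indexing set and using that both $T$ and $S\setminus T$ contain $1$ and are closed under meets). Taking meets over $i$, one obtains $p=t\wedge b$ where $t=\bigwedge_i t_i\in T$ and $b=\bigwedge_i b_i\in S\setminus T$, and primality of $p$ splits the argument in two cases. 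In the case $p=b$, I use $p\in\pt_D(S\setminus T)$ to find $i_0$ with $p=b_{i_0}$; then $p\leq a_{i_0}\leq b_{i_0}=p$ gives $p=a_{i_0}$, as desired.

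The remaining case $p=t$ needs to be ruled out. Here $p\in T$, so $\bl(p)=\{1,p\}\subseteq T$. By Lemma~\ref{a'} applied to the frame $S\setminus T$ (noting that the one-point sublocale $\bl(p)$ is the same whether computed in $L$ or in $S\setminus T$), $\bl(p)$ admits a complement $R$ in $\SSS(S\setminus T)$, so $\bl(p)\vee R=S\setminus T$ and $\bl(p)\cap R=\{1\}$. Then $S\subseteq T\vee(S\setminus T)=T\vee\bl(p)\vee R=T\vee R$, and the minimality of $S\setminus T$ as the least sublocale $R'$ with $S\subseteq T\vee R'$ forces $S\setminus T\subseteq R$; combined with $R\subseteq S\setminus T$, this gives $R=S\setminus T$ and then $\bl(p)=\bl(p)\cap(S\setminus T)=\bl(p)\cap R=\{1\}$, contradicting $p\neq 1$. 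The main subtlety that I expect will require care is precisely this final case: one must verify that joins, intersections, and complements of sublocales of $S\setminus T$ are computed identically in $\SSS(L)$, so that the minimality characterization of the co-Heyting difference can be applied.
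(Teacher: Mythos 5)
Your proof is correct, and it aims at the same intermediate target as the paper's proof --- the inclusion $\pt_D(S\setminus T)\subseteq\pt_D(S)$ --- but gets there by a genuinely different route. The paper first shows $p\notin T$ by invoking zero-dimensionality of $\SSS(L)^{op}$: it writes $T=\bigcap_i C_i$ with each $C_i$ complemented, expands $S\setminus T=\bigvee_i(S\cap C_i^c)$, and uses coveredness of $p$ in $S\setminus T$ to place $p$ in some $C_{i_0}^c$; it then upgrades ``$\bl(p)$ is complemented in $S\setminus T$'' to ``$\bl(p)$ is complemented in $S$'' via the supplement identities $\bl(p)\setminus T=\bl(p)$ and $(S\setminus T)\setminus\bl(p)=(S\setminus\bl(p))\setminus T$, and finally applies Lemma~\ref{a'} inside $S$. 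You instead work element-wise from the adjunction inequality $S\subseteq T\vee(S\setminus T)$, split each $a_i$ as $t_i\wedge b_i$, and use primality of $p$ to reduce to either $p\in T$ (excluded by the minimality clause of the co-Heyting difference together with the complement of $\bl(p)$ in $\SSS(S\setminus T)$) or $p=\bigwedge_i b_i$ (where coveredness in $S\setminus T$ hands you the index $i_0$ directly). Both arguments use Lemma~\ref{a'} in the direction ``covered $\Rightarrow$ complemented'' applied to the frame $S\setminus T$; yours trades the zero-dimensionality decomposition and the supplement calculus for the explicit join formula and the defining adjunction of $\setminus$, which makes it somewhat more self-contained. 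Two small points deserve an explicit line: first, the case split requires $p$ to be prime in $L$, not merely in $S\setminus T$ --- this is true (apply the nucleus of $S\setminus T$ to a decomposition $p=x\wedge y$ in $L$), but should be said; second, your closing remark about $\SSS(S\setminus T)$ being computed inside $\SSS(L)$ is exactly right and is the standard fact that sublocales of a sublocale form the corresponding down-set of $\SSS(L)$, with joins and intersections unchanged.
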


\begin{proof}
Let $p\in \pt_D(S\setminus T)$. Since $\SSS(L)^{op}$ is zero-dimensional, write $T=\bigcap_i C_i$ with $C_i$ complemented.   Then $S\setminus T=S\setminus \bigcap_i C_i= \bigvee_i S\setminus C_i =\bigvee_i S\cap C_{i}^{c}$. Now, $p\in S\setminus T=\bigvee_i S\cap C_{i}^{c}$ and so there is a family $\{a_i\}_{i\in I}$ with $a_i\in S\cap C_{i}^{c}$ and $p=\bigwedge_i a_i$. Since $a_i\in S\cap C_{i}^{c}\subseteq S\setminus T$ and $p\in \pt_D(S\setminus T)$, then there is an $i_0\in I$ with $p=a_{i_0}\in C_{i_0}^c$. Since $T=\bigcap_i C_i$, then we conclude $p\not\in T$. Now, $p\not\in T$ is equivalent to $\mathfrak{b}(p)\not\subseteq T$, which is in turn equivalent to $\mathfrak{b}(p)\setminus T\ne 0$. But $\mathfrak{b}(p)\setminus T\subseteq \mathfrak{b}(p)=\set{1,p}$, and so $\mathfrak{b}(p)\setminus T=\mathfrak{b}(p)$. 
I claim that $\mathfrak{b}(p)\not\subseteq S\setminus \mathfrak{b}(p)$. By way of contradiction assume otherwise. Then $(S\setminus T)\setminus \mathfrak{b}(p)= (S\setminus \mathfrak{b}(p))\setminus T\supseteq  \mathfrak{b}(p)\setminus T=\mathfrak{b}(p)$. Then $0\ne \mathfrak{b}(p)=\mathfrak{b}(p)\cap [(S\setminus T)\setminus \mathfrak{b}(p)]$, which is a contradiction since $\mathfrak{b}(p)$ is complemented in $S\setminus T$. Therefore, we must have $\mathfrak{b}(p)\not\subseteq S\setminus \mathfrak{b}(p)$, which is equivalent to $0=\mathfrak{b}(p)\cap(S\setminus \mathfrak{b}(p))$. Then $\mathfrak{b}(p)$ is complemented in $S$, i.e. $p\in \pt_D(S)\subseteq \pt_D(L)$, as we wanted.
\end{proof}

\begin{corollary}
$\SSS_D(L)$ is a dense $D$-subcolocale of $\SSS(L)$. In particular it is a coframe.
\end{corollary}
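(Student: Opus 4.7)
The plan is to read off the three claims (subcolocale, density, coframe) directly from the two preceding propositions together with one trivial observation. A subcolocale of the coframe $\SSS(L)$ is by definition a subset closed under arbitrary joins and under the co-Heyting operator $(-)\setminus(-)$, the latter being the dual of closure under $a\to(-)$ for sublocales of frames. Moreover, by the convention recalled in the preliminaries, density of a subcolocale of a coframe $C$ corresponds to containing the top of $C$, since this is the bottom of the dual frame $C^{op}$.

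First, Proposition~\ref{jjjj} gives closure of $\SSS_D(L)$ under arbitrary joins directly. Second, Proposition~\ref{impr} gives closure under the co-Heyting operator, and in fact proves more: $\SSS_D(L)$ absorbs differences with respect to \emph{any} sublocale of $L$, not just with other $D$-sublocales. These two statements together identify $\SSS_D(L)$ as a subcolocale of $\SSS(L)$.

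For density, I observe that $L$ itself (the top of the coframe $\SSS(L)$) is trivially a $D$-sublocale of itself, since the identity localic map preserves covered primes: $\pt_D(L)\cap L=\pt_D(L)$. Hence $L\in \SSS_D(L)$, which gives density by the dictionary above. The ``in particular'' clause is then automatic, because any subcolocale of a coframe is itself a coframe, dually to the familiar fact that sublocales of frames inherit a frame structure, with joins computed as in the ambient $\SSS(L)$ and meets obtained via the associated closure operator. There is no essential obstacle here: the corollary is a bookkeeping assembly of Propositions~\ref{jjjj} and~\ref{impr}, and the only mild subtlety is the need to remember that density in the coframe setting translates to containment of the top element.
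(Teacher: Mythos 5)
Your argument for the subcolocale structure, the density, and the coframe conclusion is correct and follows exactly the paper's route: closure under joins from Proposition~\ref{jjjj}, closure under the co-Heyting operator from Proposition~\ref{impr}, and density from the trivial observation that $L$ (the top of $\SSS(L)$, hence the bottom of $\SSS(L)^{op}$) lies in $\SSS_D(L)$.

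However, you have silently dropped one of the claims. The statement does not merely assert that $\SSS_D(L)$ is a dense subcolocale; it asserts that it is a dense \emph{$D$-}subcolocale, i.e.\ that $\SSS_D(L)^{op}$ is itself a $D$-sublocale of the frame $\SSS(L)^{op}$ in the sense defined in this paper (the associated frame surjection is a $D$-homomorphism, equivalently covered primes of $\SSS_D(L)^{op}$ are covered in $\SSS(L)^{op}$). You list ``three claims (subcolocale, density, coframe)'' and never address this fourth one, which is not automatic: the whole point of the paper is that a sublocale of an arbitrary frame need not be a $D$-sublocale. The paper closes this gap with a short separate argument: $\SSS(L)^{op}$ is zero-dimensional, hence regular, and in a regular frame every prime is maximal and in particular covered, so \emph{every} sublocale of $\SSS(L)^{op}$ is a $D$-sublocale. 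You would need to add this (or an equivalent) step to prove the statement as written.
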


\begin{proof}
The fact that it is a subcolocale follows from the two previous propositions. Moreover, density  follows from the obvious fact that $L$ (i.e. the bottom element of $\SSS(L)^{op}$) belongs to $\SSS_D(L)$. Finally, since $\SSS(L)^{op}$ is a zero-dimensional frame, it is in particular regular; and it is well-known that primes in any regular frame are maximal (so in particular they are covered). Thus, every sublocale of $\SSS(L)^{op}$ is a \donotbreakdash{$D$}sublocale.
\end{proof}

Density of $\SSS_D(L)$ has an important consequence:  it implies that $\SSS_b(L)\subseteq \SSS_D(L)$. Hence, all open sublocales and closed sublocales are \donotbreakdash{$D$}sublocales, and so are locally closed sublocales or more generally smooth sublocales.

The system $\SSS_D(L)$ contains several different families of sublocales, for example:

\begin{enumerate}
\item All smooth sublocales, as we have just pointed out;

\item All pointless sublocales of $L$ (as $\pt_D(S)\subseteq \pt(S)=\varnothing$);

\item Any join of pointless sublocales (joins of pointless sublocales may contain points, but their join will still be a \donotbreakdash{$D$}sublocale because  of Proposition~\ref{jjjj});

\item All codense sublocales of $L$ in which primes are maximal (these include codense naturally Hausdorff sublocales, codense fit sublocales,\dots). This only of interest if $L$ not subfit, cf. \cite{cleme18}. In order to show this assertion, one easily checks that if $p$ is a maximal element in a codense sublocale $S$, then it is maximal in $L$, and in particular it belongs to $\pt_D(L)$.
\end{enumerate}

\subsection{Functoriality}
As we will see below, the assignment $L\mapsto \SSS_D(L)^{op}$ cannot be made functorial in the whole of the category of frames in such a way that there is a natural transformation $\cl\colon 1_{\cat{Frm}}\to \SSS_D(-)^{op}$.  Therefore, one has to deal with \emph{lifts} of individual frame homomorphisms. Let   $f\colon L\to M$ be a frame homomorphism and suppose there is a further frame homomorphism $\SSS_D(f)\colon \SSS_D(L)^{op}\to \SSS_D(M)^{op}$ together with a commutative square
\[\begin{tikzcd}
\SSS_D(L)^{op} \arrow{r}{\SSS_D(f)}  &\SSS_D(M)^{op}   \\
L \arrow{r}{f}  \arrow[rightarrowtail]{u}{\cl_L} & M \arrow[rightarrowtail]{u}{\cl_M}
\end{tikzcd}\]
If $S$ is any sublocale of $L$, then one can write $S=\bigcap_{S\subseteq \op(a)\vee\cl(b)}\op(a)\vee\cl(b)$. Each of the sublocales $\op(a)\vee\cl(b)$ belongs to $\SSS_D(L)$ (as open sublocales and closed sublocales are \donotbreakdash{$D$}sublocales and $\SSS_D(L)$ is closed under joins). Now, if $S$ happens to be a \donotbreakdash{$D$}sublocale, then the intersection $\bigcap_{S\subseteq \op(a)\vee\cl(b)}\op(a)\vee\cl(b)$ coincides with the meet $\bigwedge_{S\subseteq \op(a)\vee\cl(b)}\op(a)\vee\cl(b)$ taken in $\SSS_D(L)$ (incidentally, this, together with the fact that $\op(a)\vee\cl(b)$ are complemented in $\SSS_D(L)$, shows that $\SSS_D(L)^{op}$ is a zero-dimensional frame). Hence, each $S\in\SSS_D(L)$ can be written as $S=\bigvee_{ \op(a)\wedge\cl(b)\leq S}\op(a)\wedge\cl(b)$ (here we are using the reverse order), where $\bigvee$ is nothing but the join in $\SSS(L)^{op}$.  Now, applying the frame homomorphism  $\SSS_D(f)$ and using the fact that frame homomorphisms commute with complements, we obtain
$\SSS_D(f)(S)=\bigvee_{\op(a)\wedge\cl(b)\leq S} \cl(f(a))\wedge \op(f(a))$, where  $\bigvee$ denotes join in $\SSS_D(M)^{op}$. Denote $g=f_*$. Then, one has that $\SSS_D(f)(S)$ can be computed as the largest \donotbreakdash{$D$}sublocale of $M$ contained in $g_{-1}[S]$. 

\begin{proposition}
Let $L$ a frame and $f\colon L\twoheadrightarrow S$ a surjection onto a sublocale $S$. Then $f$ lifts if and only if it is  a $D$-homomorphism (i.e. iff $S$ is a \donotbreakdash{$D$}sublocale of $L$).
\end{proposition}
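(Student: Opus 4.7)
The plan is to leverage the explicit formula derived in the discussion preceding the statement. Denote by $\sigma_M\colon\SSS(M)^{op}\to\SSS(M)^{op}$ the \emph{$D$-interior nucleus}, whose image is the sublocale $\SSS_D(M)^{op}$; this is well-defined since $\SSS_D(M)$ is a subcolocale of $\SSS(M)$, so a largest $D$-sublocale of $M$ contained in any given sublocale always exists. The unique candidate for $\SSS_D(f)$ is then $T\mapsto \sigma_S(T\cap S)$ for $T\in\SSS_D(L)$ (here $g=f_\ast\colon S\hookrightarrow L$ so that $g_{-1}[T]=T\cap S$). Both directions reduce to analyzing when this candidate defines a frame homomorphism with the required commutativity.

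For $(\Leftarrow)$, assume $S\in\SSS_D(L)$. The composite $\sigma_S\circ g_{-1}\colon\SSS(L)^{op}\to\SSS_D(S)^{op}$ is a frame homomorphism, being the composition of the coframe homomorphism $g_{-1}$ from the Preliminaries with the frame surjection $\sigma_S$ onto its image. For it to descend along the quotient $\sigma_L\colon\SSS(L)^{op}\twoheadrightarrow\SSS_D(L)^{op}$ (and thereby induce the desired $\SSS_D(f)$), we need $\sigma_S(T\cap S)=\sigma_S(\sigma_L(T)\cap S)$ for every $T\in\SSS(L)$. The inclusion $\supseteq$ follows from monotonicity of $\sigma_S$. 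For $\subseteq$ the crucial input is the \emph{transitivity} of the $D$-sublocale relation: since $S\in\SSS_D(L)$, any $D$-sublocale $U$ of $S$ satisfies $\pt_D(U)\subseteq \pt_D(S)\subseteq \pt_D(L)$ and is therefore a $D$-sublocale of $L$. Hence $\sigma_S(T\cap S)$ is itself a $D$-sublocale of $L$ contained in $T$, so it is contained in $\sigma_L(T)$, and hence in $\sigma_L(T)\cap S$; applying $\sigma_S$ closes the loop. Commutativity of the square with the closed sublocale embeddings is automatic, since $\cl_L(a)\cap S=\cl_S(f(a))$ already lies in $\SSS_D(S)$.

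For $(\Rightarrow)$, assume the lift $\varphi$ exists. In both $\SSS_D(L)^{op}$ and $\SSS_D(S)^{op}$, the pair $(\op(a),\cl(a))$ is mutually complementary (their complements in the ambient $\SSS^{op}$ lie in the respective subcolocales), so preservation of complements by frame homomorphisms combined with $\varphi(\cl_L(a))=\cl_S(f(a))$ forces $\varphi(\op(a))=\op_S(f(a))$. Fix $p\in\pt_D(S)$ and an arbitrary family $\{a_i\}_{i\in I}\subseteq L$ with $p=\bigwedge_i a_i$ in $L$; we show some $a_{i_0}=p$. Consider $\{\op(a_i)\}_{i\in I}\subseteq\SSS_D(L)$. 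Since $\op$ turns meets in $L$ into intersections in $\SSS(L)$, $\bigcap_i\op(a_i)=\op(p)$, which is itself a $D$-sublocale and therefore coincides with the meet of the family in $\SSS_D(L)$. Applying $\varphi$ --- which, being a frame homomorphism $\SSS_D(L)^{op}\to\SSS_D(S)^{op}$, preserves arbitrary meets in $\SSS_D$ --- together with the formula $\varphi(\op(a))=\op_S(f(a))$, we obtain $\op_S(p)=\varphi(\op(p))=\op_S(\bigwedge_i f(a_i))$. Injectivity of the frame isomorphism $\op_S\colon S\to\mf{Op}(S)$ gives $p=\bigwedge_i f(a_i)$ in $S$. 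Since $p$ is a covered prime of $S$, some $f(a_{i_0})=p$; combining $a_{i_0}\leq f(a_{i_0})=p$ (inflationarity of $f=\nu_S$) with $a_{i_0}\geq\bigwedge_j a_j=p$ yields $a_{i_0}=p$, whence $p\in\pt_D(L)$.

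The main obstacle is choosing the right family of $D$-sublocales for the $(\Rightarrow)$ direction. The natural first guess of closed sublocales $\cl(a_i)$ fails because $\bigcap_i\cl(a_i)=\cl(\bigvee_i a_i)$ encodes the join rather than the meet $p$; open sublocales succeed precisely because $\op\colon L\to\mf{Op}(L)$ is an order-isomorphism that transports arbitrary meets to intersections. A related subtlety in $(\Leftarrow)$ is that $T\cap S$ itself need not be a $D$-sublocale of $S$ even when $T,S\in\SSS_D(L)$ (the $[0,1]$ example appearing earlier in this section exhibits exactly this phenomenon), which is why the formula must postcompose with the $D$-interior $\sigma_S$ rather than rely on $g_{-1}$ alone.
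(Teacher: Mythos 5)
Your ``if'' half is correct and is essentially the paper's argument: the lift is $T\mapsto{}$the largest $D$-sublocale inside $T\cap S$ (by the transitivity of the $D$-sublocale relation that you point out, it is immaterial whether ``largest $D$-sublocale'' is read relative to $S$ or to $L$, so your map agrees with the paper's $T\wedge S$), and commutativity of the naturality square comes down to $\cl_L(a)\cap S=\cl_S(f(a))$ being a $D$-sublocale. Your device of factoring the frame homomorphism $\sigma_S\circ g_{-1}$ through the quotient $\sigma_L$ is a tidy way to obtain preservation of arbitrary meets, which the paper only asserts is ``easy to check''.

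The ``only if'' half has a genuine gap. The step ``$\op$ turns meets in $L$ into intersections in $\SSS(L)$, hence $\bigcap_i\op(a_i)=\op(p)$'' is false for infinite families: the embedding $\op\colon L\to\SSS(L)$ preserves arbitrary joins and \emph{finite} meets (as recorded in the Preliminaries), and in general one only has $\op\bigl(\bigwedge_i a_i\bigr)\subseteq\bigcap_i\op(a_i)$, often strictly. Take $L=\Omega(\mathbb{R})$ and $a_n=(-1/n,1/n)$: then $\bigwedge_n a_n=0$, so $\op\bigl(\bigwedge_n a_n\bigr)$ is the trivial sublocale, whereas $\bigcap_n\op(a_n)$ contains the one-point sublocale $\bl(\mathbb{R}\smallsetminus\{0\})$; since that sublocale is complemented (its prime is covered), it even lies below the meet of the $\op(a_n)$ computed in $\SSS_D(L)$, so restricting attention to $\SSS_D(L)$ does not rescue the identity. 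Consequently the equation $\varphi(\op(p))=\op_S\bigl(\bigwedge_i f(a_i)\bigr)$ does not follow, and the coveredness of $p$ in $L$ is not established --- open sublocales fail here for the same order-theoretic reason you correctly used to reject closed ones. The paper's argument for this direction is different and does go through: for $p\in\pt_D(S)$ the surjection $S\twoheadrightarrow\bl(p)$ lifts by the ``if'' part, the two lifts compose to a frame \emph{surjection} $\SSS_D(L)^{op}\twoheadrightarrow\SSS_D(\bl(p))^{op}\cong\bl(p)$ (surjectivity because any lift must send $\cl_L(a)$ and $\op_L(a)$ to $\cl_S(a)$ and $\op_S(a)$, and these generate), which exhibits $\bl(p)$ as a point of $\SSS_D(L)^{op}$; Lemma~\ref{covprimofSLd} then forces $p\in\pt_D(L)$. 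Your proof needs to be repaired along some such line.
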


\begin{proof}
For the ``if'' implication, assume that $S$ is a \donotbreakdash{$D$}sublocale of $L$ and define a map $h\colon \SSS_D(L)\to \SSS_D(S)$ given by $h(T)=T\wedge S$ (where $\wedge$ stands for meet in $\SSS_D(L)$). We note that $h$ is well-defined because since $T\wedge S\in \SSS_D(L)$ in particular we have $T\wedge S\in \SSS_D(S)$. Also, since $S$ is a \donotbreakdash{$D$}sublocale of $L$, it is easy to check that $h$ preserves arbitrary meets (and obviously finite joins as well). Hence it is a (surjective) coframe homomorphism. For showing that the relevant naturality square commutes it suffices to check that $S\cap \cl(a)$ is a \donotbreakdash{$D$}sublocale of $L$. Let $p\in\pt_D(S\cap \cl(a))$. If $p=\bigwedge_i x_i$ for a family $\{x_i\}_{i\in I}\subseteq S$, then since $p\in \cl(a)$, one has $a\leq p\leq x_i$ for each $i\in I$ and so $x_i\in S\cap \cl(a)$ for each $i\in I$. Hence there is an $i_0\in I$ with $p=x_{i_0}$. Thus $p\in \pt_D(S)\subseteq \pt_D(L)$.

Let us now show the converse, so assume that there is a frame homomorphism \linebreak $h\colon \SSS_D(L)^{op}\to \SSS_D(S)^{op}$ making the square
\[\begin{tikzcd}
\SSS_D(L)^{op} \arrow{r}{h}  &\SSS_D(S)^{op}   \\
L \arrow[twoheadrightarrow]{r}{f}  \arrow[rightarrowtail]{u}{\cl_L} & S \arrow[rightarrowtail]{u}{\cl_S}
\end{tikzcd}\]
commutative. We claim that $h$ is a surjection too. Indeed, we noted in the discussion preceding the statement that  $\SSS_D(S)^{op}$ is generated by $\set{\cl_S(a),\op_S(b)\mid a,b\in S}$, and thus it suffices to observe that $h(\cl_L(a))=\cl_S(a)$ and $h(\op_L(a))=\op_S(a)$ for each $a,b\in S$. Hence $h$ is a surjection. Let us finally check that $S$ is a \donotbreakdash{$D$}sublocale of $L$. Let $p\in\pt_D(S)$.  Then by the converse implication, the surjection $S\twoheadrightarrow \mathfrak{b}(p)$ lifts, and so we obtain a commutative diagram as follows:
\[\begin{tikzcd}
\SSS_D(L)^{op} \arrow[twoheadrightarrow]{r}{h}  &\SSS_D(S)^{op}  \arrow[twoheadrightarrow]{r}{} & \SSS_D(\mathfrak{b}(p))^{op}  \\
L \arrow[twoheadrightarrow]{r}{f}  \arrow[rightarrowtail]{u}{\cl_L} & S \arrow[rightarrowtail]{u}{\cl_S}  \arrow[twoheadrightarrow]{r}{} & \mathfrak{b}(p) \arrow{u}{\cong}
\end{tikzcd}\]
where the top composite is also a surjection. Hence there is a surjection $\SSS_D(L)^{op}\twoheadrightarrow \mathfrak{b}(p)$ which displays $\mathfrak{b}(p)$ as a point of $\SSS_D(L)^{op}$, but Lemma~\ref{covprimofSLd} below asserts than then $p$ must be a covered prime of $L$.
\end{proof}

\begin{remark}Characterizing   monomorphisms which lift seems to be a more difficult task. An analogous situation happens when studying the functoriality of the assignment $L\mapsto \SSS_b(L)$ (cf. \cite[Proposition~5.10]{Arrieta20}).\end{remark}

\section{The relation between $D$-sublocales and subspaces}

In \cite{Suarez20} the matter of the relation between subspaces and sublocales is explored. Recall that there it is proven that we have an adjunction
\[
\begin{tikzcd}[row sep=huge, column sep=huge, text height=1.5ex, text depth=0.25ex]
 \mathcal{P}(\pt(L))  \arrow[bend left=20]{r}[name=a]{\mathfrak{M}} & \mf{S}(L) \arrow[bend left=20]{l}[name=b]{\pt} \arrow[phantom, from=a, to=b]{}{\ \bot}
\end{tikzcd}
\]
in which the fixpoints are precisely the spatial sublocales of $\SL$ on one side, and the sober subspaces of $\pt(L)$ on the other. It is also shown that for every frame $L$ we have a commuting diagram, as follows, in the category of coframes.

\begin{center}
\begin{tikzcd}[row sep=large, column sep = large]
\spa [\mf{S}(L)]  
\arrow{r}{\pt(\cong)}  
& \sob[\ca{P}(\pt(L))] 
\arrow[hookrightarrow]{d}  \\
\mf{S}(L) 
\arrow{r}{\pt} 
\arrow[twoheadrightarrow]{u}{\spa} 
& \ca{P}(\pt(L))  
\end{tikzcd}
\end{center}

The top horizontal arrow is always an isomorphism. It is them meaningful to ask three questions. 
\begin{enumerate}
    \item Firstly we can ask ourselves when is it that the left vertical arrow is an isomorphism. The frames for which this holds are the \tc{totally spatial} frames, i.e. those such that all their sublocales are spatial. These are precisely the frames $L$ such that $\SL^{op}$ is spatial, and they have also been characterized in several other ways in \cite{niefield87}. By looking at the diagram, we see that the totally spatial frames are exactly those such that there is a perfect correspondence between the sublocales of $L$ and the sober subspaces of $\pt(L)$.
    \item We may also ask when it is that the right vertical arrow is an isomorphism. The frames for which this holds are exactly those such that $\pt(L)$ is a \donotbreakdash{$T_D$}space, as shown in \cite{Suarez20}. By looking at the diagram, we see that these are exactly the frames for which there is a perfect correspondence between the spatial sublocales of $L$ and the subspaces of $\pt(L)$.
    \item We may ask ourselves when it is that both vertical arrows are isomorphisms. These frames are the spatial frames such that $\pt(L)$ is a \tc{scattered} space (see \cite{simmons80}). These are the frames such that there is a perfect correspondence between sublocales of $L$ and subspaces of $\pt(L)$.
\end{enumerate}

We want to tackle the analogues of these three questions in the setting of the \donotbreakdash{$T_D$}duality. Let us start from the main adjunction.

\begin{proposition}\label{mainadjunction}
There is an adjunction 
\[
\begin{tikzcd}[row sep=huge, column sep=huge, text height=1.5ex, text depth=0.25ex]
 \mathcal{P}(\pt_D(L))  \arrow[bend left=20]{r}[name=a]{\mathfrak{M}} & \mf{S}_D(L) \arrow[bend left=20]{l}[name=b]{\pt_D} \arrow[phantom, from=a, to=b]{}{\ \bot}
\end{tikzcd}
\]
The fixpoints of $\mathfrak{M}\circ \pt_D$ are the \donotbreakdash{$T_D$}spatial \donotbreakdash{$D$}sublocales, and $\pt_D\circ \mathfrak{M}$ is the identity.
\end{proposition}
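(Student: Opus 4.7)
The plan is to verify each assertion in turn, starting by making the two maps precise: for $X\subseteq \pt_D(L)$, set
\[
\mathfrak{M}(X)=\bigset{\bigwedge A\mid A\subseteq X},
\]
the meet-closure of $X$ in $L$; and for $S\in \mf{S}_D(L)$, set $\pt_D(S)=\pt_D(L)\cap S$, which coincides with the set of covered primes of $S$ because $S$ is a $D$-sublocale.

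The first step is to check that $\mathfrak{M}(X)$ is a sublocale. Closure under arbitrary meets is built into the definition. For the Heyting condition, note that for a prime $p\in L$ and $a\in L$ one has $a\wedge (a\to p)\leq p$, and primality forces either $a\leq p$ (so $a\to p=1$) or $a\to p\leq p$ (hence $a\to p=p$). Therefore $a\to p\in\{1,p\}$, and since $a\to \bigwedge_i p_i=\bigwedge_i(a\to p_i)$, the set $\mathfrak{M}(X)$ is closed under $a\to(-)$ for every $a\in L$.

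The heart of the argument is to check that $\mathfrak{M}(X)\in \mf{S}_D(L)$, i.e.\ that every covered prime of $\mathfrak{M}(X)$ lies in $\pt_D(L)$. Let $p\in \pt_D(\mathfrak{M}(X))$. Since $p\in \mathfrak{M}(X)$, we can write $p=\bigwedge A$ for some $A\subseteq X\subseteq \mathfrak{M}(X)$. By the defining property of a covered prime of $\mathfrak{M}(X)$, there must be some $a\in A$ with $p=a$, so $p\in A\subseteq X\subseteq \pt_D(L)$, as required. Once this is in hand, the adjunction $\mathfrak{M}\dashv \pt_D$ is immediate: if $\mathfrak{M}(X)\subseteq S$ then $X\subseteq \mathfrak{M}(X)\cap\pt_D(L)\subseteq S\cap \pt_D(L)=\pt_D(S)$; conversely, if $X\subseteq \pt_D(S)\subseteq S$ then $\mathfrak{M}(X)\subseteq S$ because $S$ is closed under meets.

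For $\pt_D\circ \mathfrak{M}=\mathrm{id}$, the inclusion $X\subseteq \pt_D(\mathfrak{M}(X))$ is clear. For the reverse, if $p\in \pt_D(\mathfrak{M}(X))$ then $p\in \pt_D(L)$ by the previous paragraph, so $p$ is a covered prime of $L$; writing $p=\bigwedge A$ with $A\subseteq X\subseteq L$, the coveredness of $p$ in $L$ yields some $a\in A$ with $p=a\in X$. Finally, the fixpoints of $\mathfrak{M}\circ \pt_D$ are exactly the $T_D$-spatial $D$-sublocales: the inclusion $\mathfrak{M}(\pt_D(S))\subseteq S$ always holds, so $S$ is a fixpoint iff every $s\in S$ is a meet of elements of $\pt_D(S)$, i.e.\ a meet of covered primes of $S$, which by Lemma~\ref{me} is exactly the statement that $S$ is $T_D$-spatial. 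The main obstacle is the second step (showing $\mathfrak{M}(X)$ is a $D$-sublocale), but as shown it follows directly from unpacking the definition of covered prime in $\mathfrak{M}(X)$ applied to the very family $A\subseteq X$ witnessing membership.
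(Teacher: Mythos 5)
Your proof is correct and follows essentially the same route as the paper's: the adjunction inequivalence is checked directly from meet-closure, $\pt_D\circ\mathfrak{M}=\mathrm{id}$ is obtained by applying coveredness to the witnessing family $A\subseteq X$, and the fixpoint characterization is reduced to Lemma~\ref{me}. The only difference is that you additionally verify that $\mathfrak{M}(X)$ is a sublocale and a $D$-sublocale (a well-definedness point the paper's proof leaves implicit), which is a worthwhile addition but not a change of method.
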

\begin{proof}
The map $\pt_D$ is well-defined because if $S\in \SSS_D(L)$ then $\pt_D(S)\subseteq \pt_D(L)$. Moreover, clearly both $\pt_D$ and $\mathfrak{M}$ are monotone. For the adjunction, we have to check that $\mathfrak{M}(Y)\subseteq S$ if and only if $Y\subseteq \pt_D(S)$ for each $S\in \SSS_D(L)$ and $Y\subseteq \pt_D(L)$. The ``only if'' implication is trivial because $Y\subseteq \mathfrak{M}(Y)$, and the converse clearly follows from the fact that sublocales are closed under meets.
Now, By Lemma~\ref{me}, the fixpoints of $\mathfrak{M}\circ \pt_D$ are precisely those \donotbreakdash{$D$}sublocales which are \donotbreakdash{$T_D$}spatial. The only task remaining is to show that $\pt_D\circ \mathfrak{M}$ is the identity, so let $Y\subseteq \pt_D(L)$ and $p\in \pt_D(\mathfrak{M}(Y))$. Since $p\in \pt_D(\mathfrak{M}(Y))\subseteq \mathfrak{M}(Y)$, there is a family $\{y_i\}_{i\in I}\subseteq Y$ with $p=\bigwedge y_{i_0}$. But $p$ is a covered prime in ${Y}$ so there is an $i_0\in I$ with $p=y_{i_0}\in Y$. The other inclusion follows by adjunction.
\end{proof}

\begin{lemma}\label{tdsub}
For a frame $L$, the sublocale of $L$ associated with its \donotbreakdash{$T_D$}spatialization is $\mathfrak{M}(\pt_D(L))$.
\end{lemma}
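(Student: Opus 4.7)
The $T_D$-spatialization is the frame surjection $h\colon L\twoheadrightarrow \Omega(\pt_D(L))$ sending $a\mapsto \Sigma'_a$, and the sublocale of $L$ associated with it is the image of the right adjoint $h_*$ (equivalently, the set of fixpoints of the nucleus $\nu=h_*\circ h$). The plan is to compute $h_*$ explicitly and then verify that its image coincides with $\mathfrak{M}(\pt_D(L))$.

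First, I would compute $h_*(\Sigma'_a)$ by unwinding the Galois adjunction $h\dashv h_*$. The value $h_*(\Sigma'_a)$ is the largest $b\in L$ with $\Sigma'_b\subseteq \Sigma'_a$, i.e.\ such that $a\leq p$ implies $b\leq p$ for every $p\in\pt_D(L)$. This largest element is plainly
\[
h_*(\Sigma'_a)=\bigwedge\{p\in\pt_D(L)\mid a\leq p\},
\]
a meet of covered primes. Hence the image of $h_*$ is contained in $\mathfrak{M}(\pt_D(L))$.

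For the reverse inclusion, I would take an arbitrary $x=\bigwedge Y$ with $Y\subseteq\pt_D(L)$ and verify it is a fixpoint of $\nu$. The inequality $x\leq \nu(x)$ is automatic, since every nucleus is inflationary. Conversely, the inclusion $Y\subseteq \{p\in\pt_D(L)\mid x\leq p\}$ gives
\[
\nu(x)=\bigwedge\{p\in\pt_D(L)\mid x\leq p\}\leq \bigwedge Y=x,
\]
so $x\in h_*[\Omega(\pt_D(L))]$, completing the second inclusion.

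I do not foresee any serious obstacle: the whole argument is a short direct computation. The only bookkeeping point is the explicit description of $h_*(\Sigma'_a)$ as the stated meet of covered primes, but this is immediate from $h\dashv h_*$ and the definition of $\Sigma'_a$.
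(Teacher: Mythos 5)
Your proof is correct, but it takes a genuinely different route from the paper's. The paper argues top-down from the universal property of the \donotbreakdash{$T_D$}spatialization: the associated sublocale is the largest \donotbreakdash{$T_D$}spatial \donotbreakdash{$D$}sublocale of $L$, and any such $S$ satisfies $S=\mathfrak{M}(\pt_D(S))\subseteq\mathfrak{M}(\pt_D(L))$ by monotonicity of $\mathfrak{M}$, with Proposition~\ref{mainadjunction} implicitly supplying the fact that $\mathfrak{M}(\pt_D(L))$ is itself a \donotbreakdash{$T_D$}spatial \donotbreakdash{$D$}sublocale, so that the maximum is attained there. You instead compute the right adjoint of the counit $a\mapsto\Sigma'_a$ directly, obtaining the explicit nucleus $\nu(a)=\bigwedge\set{p\in\pt_D(L)\mid a\leq p}$, and identify its fixpoint set with $\mathfrak{M}(\pt_D(L))$ by a two-line inclusion argument; both directions check out (in particular, the reverse inclusion correctly uses that any $x=\bigwedge Y$ with $Y\subseteq\pt_D(L)$ satisfies $\nu(x)\leq\bigwedge Y=x$ because $Y$ is contained in the set of covered primes above $x$). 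Your version is more elementary and self-contained --- it needs nothing beyond the definition of $\Sigma'_a$ and the Galois adjunction, and it yields a usable closed formula for the \donotbreakdash{$T_D$}spatialization nucleus --- whereas the paper's version is shorter given the adjunction machinery already set up and makes the link to the categorical universal property explicit.
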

\begin{proof}
By the universal property of the \donotbreakdash{$T_D$}spatialization, this is the largest \donotbreakdash{$D$}sublocale of $L$ such that it is \donotbreakdash{$T_D$}spatial. Suppose that $S\se L$ is a \donotbreakdash{$T_D$}spatial \donotbreakdash{$D$}sublocale. Because $S$ is a \donotbreakdash{$D$}sublocale, we have $\pt_D(S)\se \pt_D(L)$, and because $\mathfrak{M}$ is monotone we also have $\mathfrak{M}(\pt_D(S))\se \mathfrak{M}(\pt_D(L))$. Thus, by \donotbreakdash{$T_D$}spatiality of $S$, we have proven that $S\se \mathfrak{M}(\pt_D(L))$.
\end{proof}

\subsection{Local and global $T_D$-spatialization}

It follows from Lemma~\ref{tdsub} that the map $\mathfrak{M}\circ \pt_D\colon\mf{S}_D(L)\ra \mf{S}_D(L)$ assigns to each \donotbreakdash{$D$}sublocale its \donotbreakdash{$T_D$}spatialization. From now on, we will call this map $\spa_D\colon\SLd\ra \SLd$.
 Let us see what more we can learn about this operator. 

\begin{lemma}\label{interior1}
If $L$ is a complete lattice and $\iota\colon L\ra L$ is an interior operators, then $\iota[L]$ is a complete lattice where the joins in are computed as in $L$, and the meets are computed as $\bwe^{\iota}\iota(x_i)=\iota(\bwe _i \iota(x_i))$.
\end{lemma}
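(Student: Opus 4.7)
The plan is to use the standard characterisation of the image of an interior operator as its set of fixpoints. Recall that an interior operator is a map $\iota\colon L\to L$ which is monotone, deflationary ($\iota(x)\le x$) and idempotent. It follows that $\iota[L]=\set{x\in L\mid \iota(x)=x}$: every $\iota(x)$ is a fixpoint by idempotence, and every fixpoint is trivially in the image. I will exploit this identification throughout.

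For joins, the first step is to show that $\iota[L]$ is closed under arbitrary joins computed in $L$. Given $\{y_i\}_{i\in I}\subseteq \iota[L]$, monotonicity gives $y_i=\iota(y_i)\le \iota(\bigvee_i y_i)$ for each $i$, so $\bigvee_i y_i\le \iota(\bigvee_i y_i)$, while deflation gives the reverse inequality. Hence $\bigvee_i y_i$ is a fixpoint of $\iota$ and therefore belongs to $\iota[L]$; since it is already the join in $L$, it is automatically the join in $\iota[L]$.

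For meets, I will verify that $\iota\bigl(\bigwedge_i y_i\bigr)$ (with the inner meet computed in $L$) satisfies the universal property of the meet in $\iota[L]$. First, it is a lower bound: since $\iota$ is deflationary, $\iota(\bigwedge_i y_i)\le \bigwedge_i y_i\le y_j$ for each $j$. Secondly, if $z\in\iota[L]$ is any common lower bound of $\{y_i\}_{i\in I}$, then $z\le \bigwedge_i y_i$, so applying $\iota$ and using monotonicity together with the fixpoint property $\iota(z)=z$ yields $z\le \iota(\bigwedge_i y_i)$. This simultaneously shows that the meet exists in $\iota[L]$ and establishes the formula $\bigwedge^\iota_i y_i=\iota\bigl(\bigwedge_i y_i\bigr)$, so that $\iota[L]$ is a complete lattice.

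There is no genuine obstacle here; the only subtle point is disciplined bookkeeping about whether a given meet or join is being computed in $L$ or in $\iota[L]$, which the proof handles by always expressing the $\iota[L]$-operations in terms of $L$-operations followed by applying $\iota$ where required.
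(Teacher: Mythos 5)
Your proof is correct and follows essentially the same route as the paper's: both verify that $\iota[L]$ is closed under joins computed in $L$ (a join of fixpoints is a fixpoint, by deflation and monotonicity) and that $\iota$ applied to the $L$-meet satisfies the universal property of the meet in $\iota[L]$. The only cosmetic difference is your explicit identification of $\iota[L]$ with the set of fixpoints, which the paper uses implicitly.
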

\begin{proof}
Suppose that $L$ is a complete lattice and that $\iota\colon L\ra L$ is an interior operator. Suppose that $x_i\in L$. We claim that $\bve_i \iota(x_i)$ is the join of $\set{\iota(x_i)\mid i\in I}$ in $\iota[L]$. first, we show that $\iota(\bve_i \iota(x_i))=\bve_i \iota(x_i)$. One inequality holds since $\iota$ is deflationary. For the other, we notice that $\iota(x_i)\leq \bve_i \iota(x_i)$. By monotonicity and idempotence of $\iota$, it follows that $\iota(x_i)\leq \iota(\bve_i \iota(x_i))$. The desired result follows. Let us show that $\bve_i^{\iota}\iota(x_i)=\iota(\bve_i \iota(x_i))$. On the one hand we showed before that $\iota(x_i)\leq \iota(\bve_i \iota(x_i))$. On the other hand, if we have $\iota(x_i)\leq \iota(y)$ for some $y\in L$ we also have that $\iota(\bve_i\iota(x_i))\leq \iota(y)$, by monotonicity and idempotence of $\iota$. Then, indeed $\bve_i^{\iota}\iota(x_i)=\iota(\bve_i \iota(x_i))$. Now let us show that $\bwe_i^{\iota}\iota(x_i)=\iota(\bwe_i \iota(x_i))$. On the one hand we have $\bwe_i \iota(x_i)\leq \iota(x_i)$ and so $\iota(\bwe_i \iota(x_i))\leq \iota(x_i)$ by monotonicity and idempotence. On the other hand if $\iota(y)\leq \iota(x_i)$ then we also have $\iota(y)\leq \bwe_i \iota(x_i)$ and so $\iota(y)\leq \iota(\bwe_i \iota(x_i))$ by monotonicity and idempotence.
\end{proof}

\begin{lemma}\label{interior2}
If $L$ is a complete lattice and $\iota\colon L\ra L$ an interior operator on it, the surjection $\iota\colon L\epi \iota[L]$ preserves arbitrary meets. If $\iota\colon L\ra L$ preserves finite joins, this surjection preserves finite joins, too.
\end{lemma}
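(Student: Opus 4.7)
The plan is to exploit the explicit descriptions of meets and joins in $\iota[L]$ established in Lemma~\ref{interior1}, which reduce everything to manipulations inside $L$ using only deflation, monotonicity, and idempotence of $\iota$. For an arbitrary family $\{a_i\}_{i\in I}\se L$, Lemma~\ref{interior1} tells us that the meet of $\{\iota(a_i)\}_{i\in I}$ in $\iota[L]$ is $\iota(\bigwedge_i \iota(a_i))$, so proving meet preservation by the surjection $\iota\colon L\epi \iota[L]$ reduces to the identity $\iota(\bigwedge_i a_i)=\iota(\bigwedge_i \iota(a_i))$. One direction is immediate: deflation gives $\iota(a_i)\le a_i$, hence $\bigwedge_i \iota(a_i)\le \bigwedge_i a_i$, and then monotonicity yields $\iota(\bigwedge_i \iota(a_i))\le \iota(\bigwedge_i a_i)$. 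For the reverse, $\bigwedge_i a_i \le a_i$ gives $\iota(\bigwedge_i a_i)\le \iota(a_i)$ for each $i$, whence $\iota(\bigwedge_i a_i)\le \bigwedge_i \iota(a_i)$; applying $\iota$ once more and invoking idempotence on the left-hand side yields the desired inequality. The empty-meet case is subsumed: both sides reduce to $\iota(1_L)$, which is the top element of $\iota[L]$.

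For the statement about finite joins, Lemma~\ref{interior1} also records that finite joins in $\iota[L]$ coincide with those computed in $L$. Hence, under the hypothesis that $\iota$ preserves finite joins as a self-map of $L$, the equation $\iota(a\vee b)=\iota(a)\vee \iota(b)$ in $L$ is exactly what witnesses preservation of the corresponding binary join in $\iota[L]$, and $\iota(0)=0$ (by deflation) handles the empty join. There is no substantive obstacle in the argument; the whole proof is a short exercise once Lemma~\ref{interior1} is in hand. The only point that requires a moment's care is applying idempotence at the right step so that nested expressions of the form $\iota\circ \iota$ collapse correctly in the meet computation.
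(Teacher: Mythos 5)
Your proof is correct and follows essentially the same route as the paper's: both arguments reduce meet preservation to the identity $\iota(\bigwedge_i x_i)=\iota(\bigwedge_i\iota(x_i))$ via the same deflation/monotonicity/idempotence chain, and both dispatch finite joins by citing Lemma~\ref{interior1}'s observation that joins in $\iota[L]$ are computed as in $L$. The only (harmless) addition is your explicit treatment of the empty meet and empty join, which the paper leaves implicit.
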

\begin{proof}
Suppose that $\iota\colon L\ra L$ is an interior operator. Suppose that $x_i\in L$. We first claim that $\iota(\bwe_i x_i)=\iota(\bwe_i \iota(x_i))$. We have that $\iota(x_i)\leq x_i$ since $\iota$ is deflationary, and thus $\bwe _i \iota(x_i)\leq \bwe_i x_i$, from which we deduce $\iota(\bwe_i\iota(x_i))\leq \iota(\bwe_i x_i)$ by monotonicity of $\iota$. For the reverse inequality, we start from the inequality $\bwe_i x_i\leq x_i$. By monotonicity of $\iota$ we deduce $\iota(\bwe_i x_i)\leq \bwe_i \iota(x_i)$. Finally, by monotonicity and idempotence, we deduce $\iota(\bwe_i x_i)\leq \iota(\bwe_i \iota(x_i))$. We have shown that $\iota(\bwe_i x_i)=\iota(\bwe_i \iota(x_i))=\bwe^{\iota}\iota(x_i)$, where the last equality holds by Lemma \ref{interior1}. Suppose now that $\iota\colon L\ra L$ preserves finite joins. For $x,y\in L$ we have that $\iota(x\ve y)=\iota(x)\ve \iota(y)=\iota(x)\ve^{\iota}\iota(y)$, where the last equality holds by Lemma~\ref{interior1}. 
\end{proof}

\begin{proposition}\label{spaDint}
For a frame $L$, the map $\spa_D\colon \SLd\ra \SLd$ is an interior operator which preserves finite joins.
\end{proposition}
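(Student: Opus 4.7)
The plan is to read off all three interior-operator properties from the adjunction $\mathfrak{M}\dashv\pt_D$ of Proposition~\ref{mainadjunction}, and then to verify preservation of finite joins by establishing the key identity $\pt_D(S\vee T)=\pt_D(S)\cup\pt_D(T)$ for $S,T\in\SLd$. Since $\spa_D=\mathfrak{M}\circ\pt_D$, the abstract categorical machinery and one small combinatorial lemma will do all the work.

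\textbf{Interior operator properties.} Monotonicity is immediate: both $\pt_D\colon\SLd\to\ca{P}(\pt_D(L))$ and $\mathfrak{M}\colon\ca{P}(\pt_D(L))\to\SLd$ are monotone. For deflation, the counit of the adjunction $\mathfrak{M}\dashv\pt_D$ applied to $S\in\SLd$ gives exactly $\mathfrak{M}(\pt_D(S))\subseteq S$; concretely, this follows from the adjunction inequality applied to $\pt_D(S)\subseteq\pt_D(S)$. For idempotence, I use that $\pt_D\circ\mathfrak{M}=\mathrm{id}_{\ca{P}(\pt_D(L))}$ (established in Proposition~\ref{mainadjunction}), which gives
\[
\spa_D\circ \spa_D = \mathfrak{M}\circ(\pt_D\circ \mathfrak{M})\circ\pt_D = \mathfrak{M}\circ\pt_D=\spa_D.
\]

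\textbf{Preservation of finite joins.} The empty join is the least element $\{1\}$ of $\SLd$, and $\spa_D(\{1\})=\mathfrak{M}(\pt_D(\{1\}))=\mathfrak{M}(\varnothing)=\{1\}$, as $\mathfrak{M}(\varnothing)$ is the meet of the empty family in $L$. For binary joins, the main step is to show that for $S,T\in\SLd$ one has $\pt_D(S\vee T)=\pt_D(S)\cup\pt_D(T)$. The inclusion $\subseteq$ uses the argument already deployed in Proposition~\ref{jjjj}: if $p\in\pt_D(S\vee T)$, write $p$ as a meet of elements of $S\cup T$; covered-primeness then forces $p\in S\cup T$, and a second application of covered-primeness against a defining meet of $p$ inside (say) $S$ yields $p\in\pt_D(S)$. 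For $\supseteq$, given $p\in\pt_D(S)$ and any representation $p=\bigwedge_i a_i$ with $a_i\in S\vee T$, refine each $a_i$ as a meet of elements of $S\cup T$; then $p$ is a meet of elements of $L$, and since $S$ is a $D$-sublocale, $p\in\pt_D(L)$, so one of the refined meetands equals $p$, which forces some $a_{i_0}=p$. Once this identity is established, the fact that $\mathfrak{M}$ is a left adjoint (hence join-preserving) together with the observation that joins in $\ca{P}(\pt_D(L))$ are unions and that joins in $\SLd$ agree with those in $\SL$ (Proposition~\ref{jjjj}) yields
\[
\spa_D(S\vee T)=\mathfrak{M}(\pt_D(S)\cup\pt_D(T))=\mathfrak{M}(\pt_D(S))\vee\mathfrak{M}(\pt_D(T))=\spa_D(S)\vee\spa_D(T).
\]

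\textbf{Main obstacle.} The only non-formal step is the identity $\pt_D(S\vee T)=\pt_D(S)\cup\pt_D(T)$; everything else is pure adjunction-chasing. The delicate direction is $\supseteq$, because elements of $S\vee T$ need not lie in $S\cup T$, so one has to pass to a double refinement of a given meet decomposition and use the $D$-sublocale hypothesis to retrieve a covered prime of $L$ among the meetands. Once this technical point is settled, the rest of the proposition is essentially the general fact that a reflection (here $\pt_D$ is actually a left inverse to $\mathfrak{M}$ within an adjunction) produces a join-preserving interior operator on the larger poset.
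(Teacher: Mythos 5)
Your proof is correct and follows essentially the same route as the paper: monotonicity, deflation and idempotence are read off the adjunction of Proposition~\ref{mainadjunction}, and preservation of finite joins reduces to showing $\pt_D(S\vee T)=\pt_D(S)\cup\pt_D(T)$ and invoking left-adjointness of $\mathfrak{M}$. The only cosmetic difference is that the paper obtains that identity in one line from $\pt_D(S)=\pt_D(L)\cap S$ for $D$-sublocales together with primality, whereas you verify the two inclusions element-wise via covered-primeness and a double refinement of meets; both arguments are sound.
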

\begin{proof}
The map is monotone since it is the composition of the monotone maps $\pt_D\colon \SLd\ra \ca{P}(\pt_D(L))$ and $\mathfrak{M}\colon \ca{P}(\pt_D(L))\ra \SLd$. The map is clearly deflationary. Idempotence follows from the adjunction identity $\mathfrak{M}\circ \pt_D\circ \mathfrak{M}=\mathfrak{M}$. For preservation of finite joins, suppose that we have a finite collection $S_m\in \SLd$. Since these are \donotbreakdash{$D$}sublocales and by primality of the elements in $\pt_D(L)$, we have that $\pt_D(S_1\ve ...\ve S_n)=\pt_D(L)\cap (S_1\ve ...\ve S_n)=\pt_D(L)\cap (S_1\cup ...\cup S_n)=(\pt_D(L)\cap S_1)\cup ...\cup (\pt_D(L)\cap S_n)=\pt_D(S_1)\cup ...\cup \pt_D(S_n)$. We have shown that $\pt_D\colon\SLd\ra \ca{P}(\pt_D(L))$ preserves finite joins. As $\mathfrak{M}$ is a left adjoint, it preserves all joins, in particular finite ones. Thus the composition $\mathfrak{M}\circ \pt_D\colon\SLd\ra \SLd$ preserves finite joins. 
\end{proof}

\begin{corollary}
Suppose that $L$ is a frame.cThe map $\spa_D\colon\SLd\epi \spa_D[\SLd]$ is a coframe surjection whose codomain is the ordered collection of the \donotbreakdash{$T_D$}spatial \donotbreakdash{$D$}sublocales of $L$.
\end{corollary}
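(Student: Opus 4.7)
The plan is to combine the two interior-operator lemmas with Proposition~\ref{spaDint} and Proposition~\ref{mainadjunction}. Three things need to be verified: (i) $\spa_D[\SLd]$ is indeed a coframe, so that the phrase ``coframe surjection'' makes sense; (ii) the corestriction $\spa_D\colon \SLd\epi \spa_D[\SLd]$ preserves arbitrary meets and finite joins; (iii) the image is precisely the ordered collection of $T_D$-spatial $D$-sublocales of $L$.

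For (i) and (ii), Proposition~\ref{spaDint} already tells me that $\spa_D$ is an interior operator on the coframe $\SLd$ that preserves finite joins. Applying Lemma~\ref{interior1} equips $\spa_D[\SLd]$ with a complete lattice structure in which joins coincide with those of $\SLd$ and meets are computed as $\bigwedge_i^{\spa_D} S_i = \spa_D(\bigcap_i S_i)$. Lemma~\ref{interior2} then supplies that the corestricted map preserves arbitrary meets and, because $\spa_D$ preserves finite joins on $\SLd$, also finite joins. To upgrade this complete lattice structure to a coframe, I would verify the distributivity law inside the image by the following standard trick: taking a fixpoint $S$ and a family of fixpoints $\{T_i\}$, I apply $\spa_D$ to the coframe identity $S\vee \bigcap_i T_i = \bigcap_i (S\vee T_i)$ holding in $\SLd$, and use $\spa_D(S)=S$ together with preservation of finite joins to rewrite both sides in terms of the operations of $\spa_D[\SLd]$, yielding $S \vee \bigwedge_i^{\spa_D} T_i = \bigwedge_i^{\spa_D}(S\vee T_i)$.

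For (iii), the image of an interior operator coincides with its set of fixpoints; and by Proposition~\ref{mainadjunction} the fixpoints of $\spa_D = \mathfrak{M}\circ \pt_D$ are exactly the $T_D$-spatial $D$-sublocales of $L$. Surjectivity of $\spa_D\colon \SLd\to\spa_D[\SLd]$ onto its image is of course automatic.

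Beyond this bookkeeping, there is really no computational content, and the only mildly subtle step is checking the coframe distributivity in the image. Since this reduces to a single manipulation in $\SLd$ followed by an application of $\spa_D$, I do not foresee any genuine obstacle; the bulk of the proof is simply stringing together the previous two lemmas with Propositions~\ref{spaDint} and~\ref{mainadjunction}.
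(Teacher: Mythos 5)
Your proposal is correct and follows essentially the same route as the paper: the paper's own proof simply cites Lemma~\ref{interior2} and Proposition~\ref{spaDint} for the coframe surjection and Proposition~\ref{mainadjunction} for the identification of the image with the $T_D$-spatial $D$-sublocales. The only difference is that you explicitly verify the coframe distributive law in $\spa_D[\SLd]$ (a step the paper leaves implicit in its citation of the two lemmas), and your verification of it is sound.
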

\begin{proof}
The fact that we have a coframe surjection follows from Lemma~\ref{interior2} and from Proposition~\ref{spaDint}. The fact that the codomain is the ordered collection of the \donotbreakdash{$T_D$}spatial \donotbreakdash{$D$}sublocales follows from Corollary \ref{mainadjunction}.
\end{proof}

\begin{remark}
The operator $\spa_D\colon\SLd\ra \SLd$ actually preserves arbitrary joins, too. In the proof of Lemma~\ref{spaDint}, we have shown that for a finite collection $S_m$ of sublocales we have that $\pt_D(S_1\ve ...\ve S_n)=\pt_D(S_1)\cup...\cup p\pt_D(S_n)$. Using coveredness of primes, this argument can be strengthened to show that $\pt_D(\bve_i S_i)=\bcu_i \pt_D(S_i)$ for an arbitrary collection $S_i$ of sublocales. This tells us that in the frame $\SLd^{op}$ the congruence associated with the sublocale $\spa_D[\SLd]$ is a \emph{complete} congruence (see \cite{cleme18}).
\end{remark}

We then have a local \donotbreakdash{$T_D$}spatialization surjection $\spa_D\colon\SLd\epi \spa_D[\SLd]$. As $\SLd^{op}$ is a frame, it has its own \donotbreakdash{$T_D$}spatialization sublocale; its global \donotbreakdash{$T_D$}spatialization subcolocale, so to speak. We want to know how the local \donotbreakdash{$T_D$}spatialization subcolocale $\spa[\SLd]\se \SLd$ relates to the global one. 

\begin{lemma}\label{joinofbp}
For a frame $L$, a \donotbreakdash{$D$}sublocale $S\se L$ is \donotbreakdash{$T_D$}spatial if and only if $S=\bve \set{\bl(p)\mid p\in \pt_D(S)}$.
\end{lemma}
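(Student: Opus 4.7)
The plan is to reduce the statement to the characterization of $T_D$-spatiality as ``every element is a meet of covered primes'' (Lemma~\ref{me}), combined with a concrete description of the sublocale join $\bve \set{\bl(p)\mid p\in \pt_D(S)}$ in terms of meets of one-point sublocales.

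First I would record the key identity that for any collection $\{p_i\}_{i\in I}$ of primes in $L$, the join $\bve_i \bl(p_i)$ computed in $\mf{S}(L)$ consists exactly of all meets $\bigwedge A$ with $A\se \{p_i\mid i\in I\}$ (together with the top element $1$, corresponding to the empty meet). This follows directly from the explicit formula for sublocale joins recalled in the Preliminaries together with the fact that each $\bl(p_i)=\{1,p_i\}$. Consequently, $\bve \set{\bl(p)\mid p\in \pt_D(S)}$ coincides with $\mathfrak{M}(\pt_D(S))$, the meet-closure of $\pt_D(S)$ in $L$.

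For the forward implication, suppose $S$ is a $T_D$-spatial $D$-sublocale. Since $S$ is a $D$-sublocale, $\pt_D(S)=\pt_D(L)\cap S$, so by the fixpoint description of $T_D$-spatial $D$-sublocales in Proposition~\ref{mainadjunction}, $S=\mathfrak{M}(\pt_D(S))$. By the identity above, $S=\bve\{\bl(p)\mid p\in \pt_D(S)\}$. For the converse, if $S=\bve\{\bl(p)\mid p\in \pt_D(S)\}$, then every element of $S$ is a meet of elements of $\pt_D(S)\se \pt_D(L)$, hence a meet of covered primes of $L$; Lemma~\ref{me} then gives that $S$ is $T_D$-spatial.

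There is essentially no obstacle in this proof once the join-formula is unpacked; the only subtlety is checking that the formula $\bve_i S_i=\{\bigwedge A\mid A\se \bigcup_i S_i\}$ applied to one-point sublocales does indeed yield exactly $\mathfrak{M}(\pt_D(S))$, which is straightforward since each $\bl(p)$ contributes only the two elements $p$ and $1$. Thus the lemma amounts to recognising the equality $\bve\{\bl(p)\mid p\in \pt_D(S)\}=\mathfrak{M}(\pt_D(S))$ and invoking the already-established fixpoint characterization.
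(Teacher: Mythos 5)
Your proposal is correct and follows essentially the same route as the paper: the paper's proof is precisely the observation that $\bve \set{\bl(p)\mid p\in \pt_D(S)}=\mathfrak{M}(\pt_D(S))$ combined with the fixpoint characterization of \donotbreakdash{$T_D$}spatial \donotbreakdash{$D$}sublocales from Proposition~\ref{mainadjunction}. Your only deviation is routing the converse through Lemma~\ref{me} directly rather than through the fixpoint statement, which is the same underlying fact.
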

\begin{proof}
By Corollary \ref{mainadjunction}, a \donotbreakdash{$D$}sublocale $S\se L$ is \donotbreakdash{$T_D$}spatial if and only if $\mathfrak{M}(\pt_D(S))=S$. But $\mathfrak{M}(\pt_D(S))=\bve \set{\bl(p)\mid p\in \pt_D(S)}$ and so we are done.
\end{proof}

\begin{lemma}\label{covprimofSLd}
The covered prime elements of $\SLd^{op}$ are exactly the sublocales of the form $\bl(p)$ with $p\in \pt_D(L)$.
\end{lemma}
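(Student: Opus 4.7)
The plan is to prove the two inclusions separately. For the easy direction, given $p \in \pt_D(L)$, I first recall that $\bl(p)$ is complemented in $\SL$ by Lemma~\ref{a'}, hence lies in $\SLb \subseteq \SLd$ (the inclusion comes from the corollary on density of $\SLd$). To check that $\bl(p) = \{1, p\}$ is a covered prime in $\SLd^{op}$, I would suppose $\bl(p) = \bigvee_i S_i$ with each $S_i \in \SLd$; since joins in $\SLd$ coincide with joins in $\SL$ by Proposition~\ref{jjjj}, the element $p$ can be written as a meet $p = \bigwedge_j a_j$ with each $a_j$ drawn from some $S_{i_j}$, and coveredness of $p$ in $L$ forces $p = a_{j_0}$ for some $j_0$. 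Hence $\bl(p) \subseteq S_{i_{j_0}} \subseteq \bl(p)$, giving equality, and $\bl(p) \ne \{1\}$ is clear.

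For the converse, let $S$ be a covered prime of $\SLd^{op}$. Since $S \ne \{1\}$, pick any $s \in S$ with $s \ne 1$. The key step is the decomposition
\[S = (S \cap \op(s)) \vee (S \cap \cl(s)),\]
which follows from $\op(s) \vee \cl(s) = L$ together with the distributivity of meets over joins in the coframe $\SL$. Crucially, both summands lie in $\SLd$: since $\op(s)$ and $\cl(s)$ are complements of each other, one has $S \cap \op(s) = S \setminus \cl(s)$ and $S \cap \cl(s) = S \setminus \op(s)$, and Proposition~\ref{impr} applies. Coveredness of $S$ in $\SLd^{op}$ then forces $S = S \cap \op(s)$ or $S = S \cap \cl(s)$. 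The first is impossible because $s \in \op(s)$ holds iff $s \to s = s$, i.e.\ iff $s = 1$. Hence $S \subseteq \cl(s) = \up s$. Since the original $s \in S \setminus \{1\}$ was arbitrary, any two non-top elements of $S$ must lie above each other, so $S$ has a unique non-top element $p$ and $S = \{1, p\}$. Closure of $S$ under the Heyting operations $a \to p$ then forces $p$ to be prime in $L$ (otherwise $x \wedge y \le p$ with $x, y \not\le p$ gives $x \to p \in S \setminus \{1, p\}$), so $S = \bl(p)$; and since $S \in \SLd$, we have $p \in \pt_D(\bl(p)) \subseteq \pt_D(L)$.

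The main obstacle is locating the decomposition $S = (S \cap \op(s)) \vee (S \cap \cl(s))$ and, critically, verifying that both summands remain \donotbreakdash{$D$}sublocales: without Proposition~\ref{impr} the rewriting could not be fed into the covered-prime hypothesis on $S \in \SLd^{op}$. Once that is in place, the rest of the argument is essentially forced—the open summand cannot contain $s$ unless $s = 1$, so $S$ is squeezed into $\up s$ for every non-trivial $s \in S$ and collapses to a two-element chain.
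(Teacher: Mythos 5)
Your proposal is correct and follows essentially the same route as the paper: coveredness of $p$ in $L$ handles the forward inclusion, and the splitting $L=\op(s)\vee\cl(s)$ collapses a covered prime $S$ of $\SLd^{op}$ to a two-element chain $\bl(p)$. The only real difference is that you argue directly, intersecting with $S$ and invoking Proposition~\ref{impr} to keep both pieces inside $\SLd$, whereas the paper argues contrapositively with the cover $S\subseteq\op(x)\vee\cl(x)$ itself --- a form that has the side benefit of showing that every prime (not merely every covered prime) of $\SLd^{op}$ is of the form $\bl(p)$, which is recorded as Observation~\ref{obs1} and used in Proposition~\ref{SLdspatial}.
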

\begin{proof}
Suppose that $p\in \pt_D(L)$. We claim that $\bl(p)\in \SLd^{op}$ is a covered prime. Indeed, we observe that if we have $\bl(p)= \bve_i S_i$ we clearly must have $S_j=\set{1,p}$ for some $j\in I$. 

For the converse, suppose that $S\in \SLd$ and that $S\neq \bl(p)$ for each $p\in \pt_D(L)$. Since for each prime $p\in \pt(L)$ we have that $\bl(p)\in \SLd$ if and only if $p$ is covered, we must have that $S$ cannot be a two-element sublocale, and so it contains at least three elements. Suppose, then, that $\set{x,y,1}$ are pairwise distinct and that $\set{x,y}\se S$. Suppose, in particular, that $x\nleq y$. First, recall that all open and all closed sublocales of $L$ are elements of $\SLd$. We claim that $S\se \op(x)\ve \cl(x)$ is a nontrivial decomposition of $S$. We have that $x\ra x=1\neq x$, and so $x\notin \op (x)$, hence $S\nsubseteq \op (x)$. On the other hand, we have that $y\notin \cl(x)$ and so $S\nsubseteq \cl(x)$. But since $\cl(x)\ve \op (x)=1$ we have $S\se \op(x)\ve \cl(x)$.
\end{proof}

\begin{observation}\label{obs1}
In the proof above, starting from the assumption that a sublocale $S\in \SLd$ is not of the form $\bl(p)$ for a covered prime $p\in L$ we have shown that $S$ is not a prime. This means that in $\SLd$ if an element is not a covered prime it is not a prime. 
\end{observation}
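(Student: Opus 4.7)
The observation asserts that in the coframe $\SLd$ the primes of $\SLd^{op}$ coincide with the covered primes already characterized in Lemma~\ref{covprimofSLd}. My plan is simply to revisit the proof of Lemma~\ref{covprimofSLd} and notice that the ``nontrivial decomposition'' exhibited there is actually a genuine binary join decomposition inside $\SLd$, not merely a failure of covered primality.

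Given $S\in\SLd$ which is not of the form $\bl(p)$ for any covered prime $p\in L$, the proof of Lemma~\ref{covprimofSLd} already supplies elements $x,y\in S$, both distinct from $1$, with $x\nleq y$, together with the non-containments $S\nsubseteq \op(x)$ (because $x\notin \op(x)$) and $S\nsubseteq \cl(x)$ (because $y\notin \cl(x)$). The only addition to that proof is a distributivity step: since the coframe $\SL$ is, as a lattice, distributive and $\op(x),\cl(x)$ are complementary in $\SL$,
$$S=S\cap L=S\cap (\op(x)\vee \cl(x))=(S\cap \op(x))\vee (S\cap \cl(x)).$$

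It remains to verify that both summands belong to $\SLd$ and are strictly smaller than $S$. For membership, I would invoke Proposition~\ref{impr} (closure of $\SLd$ under the difference operator) together with the identity $T\setminus C=T\cap C^{\#}$ valid for complemented $C$: this gives $S\cap \op(x)=S\setminus \cl(x)\in\SLd$ and $S\cap \cl(x)=S\setminus \op(x)\in\SLd$. Strictness of the two inclusions is then immediate from the non-containments just recalled ($x$ witnesses $S\cap \op(x)\subsetneq S$, and $y$ witnesses $S\cap \cl(x)\subsetneq S$). Hence $S$ admits a nontrivial binary join decomposition inside $\SLd$, witnessing that $S$ is not a prime of $\SLd^{op}$. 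I do not expect any genuine obstacle; the only non-cosmetic input beyond the existing Lemma~\ref{covprimofSLd} is coframe distributivity, which holds for free.
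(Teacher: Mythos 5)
Your proposal is correct and follows essentially the same route as the paper: the observation is precisely the remark that the decomposition exhibited in the proof of Lemma~\ref{covprimofSLd} already witnesses failure of primality (not merely of covered primality), and your explicit distributivity step $S=(S\cap \op(x))\vee (S\cap \cl(x))$, together with the membership check via Proposition~\ref{impr} and the identity $S\setminus C=S\cap C^{\#}$, just spells out what the paper leaves implicit. The added verification that both summands lie in $\SLd$ (so that the join is genuinely taken inside $\SLd$) is a worthwhile point of rigor, but it is not a different argument.
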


We have then proved the following.
\begin{proposition}\label{SLdspatial}
In the frame $\SLd^{op}$ all primes are covered. So, the frame $\SLd^{op}$ is \donotbreakdash{$T_D$}spatial if and only if it is spatial.
\end{proposition}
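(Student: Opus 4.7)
My plan is to derive both assertions directly from what has already been assembled. For the first sentence, I would observe that Lemma~\ref{covprimofSLd} has pinned down the covered primes of $\SLd^{op}$ as exactly the sublocales $\bl(p)$ with $p\in\pt_D(L)$, while Observation~\ref{obs1} records the strictly stronger fact implicit in the proof of that lemma: if $S\in\SLd$ is \emph{not} one of these $\bl(p)$ (and of course $S\ne\{1\}$, which is the top of $\SLd^{op}$ and so automatically non-prime), then $S$ already fails to be prime in $\SLd^{op}$. Indeed, for any such $S$ one can pick $x,y\in S\sm\{1\}$ with $x\nleq y$, and the open and closed sublocales $\op(x),\cl(x)$ both lie in $\SLd$ with $S\se \op(x)\ve\cl(x)=L$, yet $x\in S\sm\op(x)$ and $y\in S\sm\cl(x)$, so neither of $\op(x),\cl(x)$ contains $S$. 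This precisely witnesses that $S$ is not prime in the frame $\SLd^{op}$, and so every prime of $\SLd^{op}$ is in fact a covered prime.

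The second sentence is then essentially automatic. One implication ($T_D$-spatial $\Rightarrow$ spatial) holds for any frame. For the converse, if $\SLd^{op}$ is spatial then every one of its elements is a meet of primes; but by the first sentence those primes are all covered, so every element is in fact a meet of covered primes. Lemma~\ref{me} then gives that $\SLd^{op}$ is \donotbreakdash{$T_D$}spatial. (Equivalently, Theorem~\ref{ssss} says that spatiality together with every prime being covered amounts to strong \donotbreakdash{$T_D$}spatiality, which is a fortiori \donotbreakdash{$T_D$}spatiality.)

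I do not foresee any real obstacle: Observation~\ref{obs1} and Lemma~\ref{covprimofSLd} were engineered precisely to yield the coincidence of primes and covered primes in $\SLd^{op}$, and once that is in hand the $T_D$-spatial-versus-spatial equivalence is an immediate consequence of the characterization of \donotbreakdash{$T_D$}spatiality as ``every element is a meet of covered primes''.
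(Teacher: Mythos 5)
Your proof is correct and follows essentially the same route as the paper's, which simply cites Lemma~\ref{covprimofSLd} and Observation~\ref{obs1} for the first assertion; you merely spell out the $\op(x)\vee\cl(x)$ decomposition already contained in the proof of that lemma and make the appeal to Lemma~\ref{me} for the second assertion explicit. No gaps.
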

\begin{proof}
This follows from Proposition~\ref{covprimofSLd} and from Observation~\ref{obs1}.
\end{proof}

We are ready to show that the local and the global spatialization of $\SLd$ coincide.
\begin{proposition}\label{locandglob}
For any frame $L$ we have that $\mathfrak{M}(\pt_D(\SLd))=\spa_D[\SLd]$. In other words, global and local \donotbreakdash{$T_D$}spatialization of $\SLd$ coincide.
\end{proposition}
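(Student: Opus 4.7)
The plan is to compute both sides explicitly and show that they consist of exactly the same sublocales of $L$. By Lemma~\ref{covprimofSLd} the set $\pt_D(\SLd)$ --- that is, the set of covered primes of the frame $\SLd^{op}$ --- is precisely $\set{\bl(p)\mid p\in\pt_D(L)}$. When I view $\SLd$ as the frame $\SLd^{op}$, the operator $\mathfrak{M}$ takes meet-closure in $\SLd^{op}$, which by Proposition~\ref{jjjj} is exactly the join in $\SL$ (since $\SLd$ is closed under joins in $\SL$). So $\mathfrak{M}(\pt_D(\SLd))$ is the collection of sublocales of the form $\bve_{i\in I}\bl(p_i)$ with $\{p_i\}_{i\in I}\se\pt_D(L)$.

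For the inclusion $\spa_D[\SLd]\se \mathfrak{M}(\pt_D(\SLd))$, I would pick $S\in \spa_D[\SLd]$, which by definition is a \donotbreakdash{$T_D$}spatial \donotbreakdash{$D$}sublocale. Lemma~\ref{joinofbp} gives $S=\bve\set{\bl(p)\mid p\in\pt_D(S)}$, and since $S$ is a \donotbreakdash{$D$}sublocale, $\pt_D(S)\se \pt_D(L)$. Hence $S$ is a join of sublocales of the form $\bl(p)$ with $p\in\pt_D(L)$, so $S\in \mathfrak{M}(\pt_D(\SLd))$.

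For the reverse inclusion, I would take $S=\bve_i\bl(p_i)$ with each $p_i\in\pt_D(L)$. By Proposition~\ref{jjjj}, $S\in\SLd$, so $\pt_D(S)=S\cap\pt_D(L)$. Since $\bl(p_i)\se S$ implies $p_i\in S$, we obtain $\set{p_i}_i\se\pt_D(S)$, and therefore $\bve\set{\bl(p)\mid p\in\pt_D(S)}\supseteq \bve_i\bl(p_i)=S$; the other inclusion is immediate since each $p\in\pt_D(S)\se S$ gives $\bl(p)\se S$. Hence $S=\bve\set{\bl(p)\mid p\in\pt_D(S)}$, which by Lemma~\ref{joinofbp} means $S$ is \donotbreakdash{$T_D$}spatial, so $S\in \spa_D[\SLd]$.

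The only subtle point --- and not really an obstacle --- is to keep straight the two instances of $\mathfrak{M}$: the original one $\ca{P}(\pt_D(L))\to\SLd$ from Proposition~\ref{mainadjunction}, and the one implicit in the left-hand side, which is the analogous operator for the frame $\SLd^{op}$ sending subsets of $\pt_D(\SLd)$ to sublocales of $\SLd^{op}$. The bijection of Lemma~\ref{covprimofSLd} together with the fact that joins in $\SLd$ agree with joins in $\SL$ identifies these two, and the rest of the argument is just unpacking definitions.
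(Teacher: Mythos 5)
Your proof is correct and follows essentially the same route as the paper's: identify $\pt_D(\SLd)$ via Lemma~\ref{covprimofSLd}, observe that $\mathfrak{M}$ then produces exactly the joins of sublocales $\bl(p)$ with $p\in\pt_D(L)$, and match these with the \donotbreakdash{$T_D$}spatial \donotbreakdash{$D$}sublocales via Lemma~\ref{joinofbp}. You merely spell out in more detail (via the two inclusions and the identification of the two instances of $\mathfrak{M}$) what the paper's three-line proof leaves implicit.
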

\begin{proof}
By Lemma~\ref{covprimofSLd}, we have that $\pt_D(\SLd)=\set{\bl(p)\mid p\in \pt_D(L)}$. Hence the elements of $\mathfrak{M}(\pt_D(\SLd))$ are exactly the joins of elements of the form $\bl(p)$ for $p\in \pt_D(L)$. By Lemma~\ref{joinofbp}, these are exactly the \donotbreakdash{$T_D$}spatial \donotbreakdash{$D$}sublocales of $L$.
\end{proof}

\begin{corollary}
For a frame $L$, we have that all its \donotbreakdash{$D$}sublocales are \donotbreakdash{$T_D$}spatial if and only if $\SLd$ is \donotbreakdash{$T_D$}spatial, and this holds if and only if $\SLd$ is spatial.
\end{corollary}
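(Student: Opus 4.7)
The plan is to derive the corollary as an essentially immediate consequence of Propositions~\ref{SLdspatial} and~\ref{locandglob}, together with the description of $\spa_D[\SLd]$ as the collection of \donotbreakdash{$T_D$}spatial \donotbreakdash{$D$}sublocales of $L$. I would state and prove the two equivalences separately.

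For the second equivalence (``$\SLd$ is \donotbreakdash{$T_D$}spatial iff $\SLd$ is spatial'') I would simply invoke Proposition~\ref{SLdspatial}, which asserts that every prime of the frame $\SLd^{op}$ is covered; since \donotbreakdash{$T_D$}spatiality of a frame amounts to every element being a meet of covered primes (Lemma~\ref{me}), in the present situation it is indistinguishable from spatiality (every element being a meet of primes).

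For the first equivalence, I would use Proposition~\ref{locandglob}, which identifies the local \donotbreakdash{$T_D$}spatialization operator $\spa_D$ on $\SLd$ with the global \donotbreakdash{$T_D$}spatialization of the frame $\SLd^{op}$. A frame is \donotbreakdash{$T_D$}spatial precisely when its \donotbreakdash{$T_D$}spatialization surjection is an isomorphism, i.e. when the \donotbreakdash{$T_D$}spatialization sublocale coincides with the whole frame. Hence $\SLd$ is \donotbreakdash{$T_D$}spatial iff $\spa_D[\SLd]=\SLd$. Since $\spa_D[\SLd]$ is exactly the ordered collection of \donotbreakdash{$T_D$}spatial \donotbreakdash{$D$}sublocales of $L$ (as already recorded in the corollary immediately after Proposition~\ref{spaDint}), the equality $\spa_D[\SLd]=\SLd$ is equivalent to saying that every \donotbreakdash{$D$}sublocale of $L$ is \donotbreakdash{$T_D$}spatial.

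There is no real obstacle here; the only thing to be careful about is not to confuse the two levels, namely the \donotbreakdash{$T_D$}spatialization of individual \donotbreakdash{$D$}sublocales of $L$ (local) and the \donotbreakdash{$T_D$}spatialization of the frame $\SLd^{op}$ (global). Proposition~\ref{locandglob} bridges these two and is the substantive input that makes the argument a one-liner.
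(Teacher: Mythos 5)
Your proposal is correct and matches the paper's own proof, which likewise derives the corollary directly from Propositions~\ref{locandglob} and~\ref{SLdspatial} (using the former for the equivalence with all \donotbreakdash{$D$}sublocales being \donotbreakdash{$T_D$}spatial, and the latter for the equivalence of spatiality and \donotbreakdash{$T_D$}spatiality of $\SLd$). Your version simply spells out the one-line argument in more detail.
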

\begin{proof}
This follows from Propositions~\ref{locandglob} and \ref{SLdspatial}. 
\end{proof}

\begin{remark}
It should be noted that $\spa_D[\SSS_D(L)]$ is not only spatial, but it is also Boolean for any frame $L$. Indeed, it follows by virtue of the adjunction in Proposition~\ref{mainadjunction} and the fact that $\pt_D\circ \mathfrak{M}$ is the identity that it is actually isomorphic to $\mathcal{P}(\pt_D(L))$. More precisely, it corresponds to the Boolean sublocale of $\SSS(L)^{op}$ determined by the element $\spa_D(L)$, i.e. we have $\spa_D[\SSS_D(L)]=\mathfrak{b}(\spa_D(L))$ (to see this, observe that a Boolean sublocale $S$ of a frame $L$ always equals $\mathfrak{b}(\bigwedge S)$, hence $\spa_D[\SSS_D(L)]$ is determined by the largest \donotbreakdash{$D$}sublocale which is \donotbreakdash{$T_D$}spatial, i.e. the \donotbreakdash{$T_D$}spatialization of $L$).
\end{remark}

We wish to stress that the behaviour of the operator $\pt_D\circ \mathfrak{M}\colon \ca{P}(\pt_D(L))\ra \ca{P}(\pt_D(L))$ exhibits a difference between the spatial-sober duality and the \donotbreakdash{$T_D$}duality. In the classical duality (see \cite{Suarez20}) we have an adjunction
\[
\begin{tikzcd}[row sep=huge, column sep=huge, text height=1.5ex, text depth=0.25ex]
 \mathcal{P}(\pt(L))  \arrow[bend left=20]{r}[name=a]{\mathfrak{M}} & \mf{S}(L) \arrow[bend left=20]{l}[name=b]{\pt} \arrow[phantom, from=a, to=b]{}{\ \bot}
\end{tikzcd}
\]

in which the fixpoints of $\mathfrak{M}\circ \pt$ are precisely the spatial sublocales, and the fixpoints of $\pt\circ \mathfrak{M}$ are the sober subspaces of $\pt(L)$. None of the two compositions is the identity. In particular, since not all subspaces of $\pt(L)$ are sober, this means that the map $\pt\colon\SL\ra \ca{P}(\pt(L))$ will not in general be a surjection: some subspaces of $\pt(L)$ do not arise as spectra of sublocales of $L$. This accounts for part of the mismatch between subspaces and sublocales. In the \donotbreakdash{$T_D$}duality, this does not happen. Since the operator $\pt_D\circ \mathfrak{M}$ is the identity, it means that all subspaces of $\pt_D(L)$ arise as spectra of \donotbreakdash{$D$}sublocales of $L$. We shall see that this means that in the \donotbreakdash{$T_D$}duality the mismatch between subspaces and sublocales is only given by the map $\pt_D\colon\SLd\ra \ca{P}(\pt_D(L))$ not being injective: in the \donotbreakdash{$T_D$}duality we have more sublocales than subspaces. 

Let us now look at the $T_D$ analogue of the square in the beginning of this section.

\bigskip
\[
\begin{tikzcd}[row sep=large, column sep=large]
(\mathfrak{M}\circ \pt_D)[\mf{S}_D(L)]
\arrow{r}{\pt_D(\cong)}
& (\pt_D\circ \mathfrak{M})[\ca{P}(\pt_D(L))]
\arrow[d,"="]\\
\mf{S}_D(L)
\arrow{r}{\pt_D}
\arrow[u,twoheadrightarrow,"\mathfrak{M}\circ \pt_D"]
& \ca{P}(\pt_D(L))
\end{tikzcd}\]

\bigskip

We know that the diagram commutes in the category of coframes as the composition $\pt_D\circ \mathfrak{M}$ is the identity and so indeed $\pt_D\circ \mathfrak{M}\circ \pt_D=\pt_D$. We observe that the right vertical arrow is always the identity, by Corollary \ref{mainadjunction}. Thus, in the \donotbreakdash{$T_D$}duality, we are left with only one of the three questions. We shall then characterize those frames $L$ such that all the \donotbreakdash{$D$}sublocales of $L$ are \donotbreakdash{$T_D$}spatial, and these coincide with the frames $L$ such that there is a perfect correspondence between \donotbreakdash{$D$}sublocales of $L$ and subspaces of $\pt_D(L)$.

\subsection{When do the $D$-sublocales perfectly represent the subspaces?}

For a frame $L$ and a meet $\bwe M$, we say that $m\in M$ is an \tc{essential} element of the meet $\bwe M$ if we have $\bwe M{\sm}\{m\}\neq \bwe M$. If $L$ is a frame and  $a\in L$ is the meet of the primes above it, we say that $p\in \pt(\up a)$ is an \tc{essential} prime of $a$ if and only if $\bwe \pt(\up a){\sm}\up p\neq a$. We say that $p\in \pt(\up a)$ is an \tc{absolutely essential} prime of $a$ if $\bwe \pt(\up a){\sm}\{p\}\neq a$. It is clear that absolute essentiality is a condition stronger than essentiality. The terminology ``absolutely" essential is justified by the following result.

\begin{lemma}\label{essentialprimes}
For a frame $L$ and for $a\in L$ with $a=\bwe \pt(\up a)$ and for $p\in \pt(\up a)$, the following are equivalent.
\begin{enumerate}[\normalfont(1)]
    \item The prime $p$ is an absolutely essential prime of $a$;
    \item Whenever $a=\bwe P$ for $P\se \pt(L)$ we must have $p\in P$;
    \item The prime $p$ is weakly covered and it is an essential prime of $a$.
\end{enumerate}
\end{lemma}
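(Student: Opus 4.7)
The strategy is to prove the cycle $(1) \Leftrightarrow (2) \Rightarrow (3) \Rightarrow (1)$. For $(1) \Leftrightarrow (2)$, the key observation is that any representation $a = \bwe P$ with $P \se \pt(L)$ automatically satisfies $P \se \pt(\up a)$, since each element of $P$ majorizes $a$. Thus if (2) fails---i.e., $a = \bwe P$ with $p \notin P$---then $P \se \pt(\up a){\sm}\{p\}$, forcing $\bwe(\pt(\up a){\sm}\{p\}) \leq a$ and hence equality, i.e., (1) fails. Conversely, if (1) fails then $P = \pt(\up a){\sm}\{p\}$ witnesses the failure of (2).

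For $(2) \Rightarrow (3)$, essentiality is immediate from the inclusion $\pt(\up a){\sm}\up p \se \pt(\up a){\sm}\{p\}$, which via (1) gives $\bwe(\pt(\up a){\sm}\up p) \geq \bwe(\pt(\up a){\sm}\{p\}) > a$. For weak coveredness, given a representation $p = \bwe Q$ with $Q \se \pt(L)$, each $q \in Q$ satisfies $q \geq p \geq a$, so $Q \se \pt(\up a)$. Setting $P = (\pt(\up a){\sm}\{p\}) \cup Q \se \pt(L)$, its meet equals $\bwe(\pt(\up a){\sm}\{p\}) \we \bwe Q = \bwe(\pt(\up a){\sm}\{p\}) \we p = a$, using that $p \in \pt(\up a)$. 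By (2), $p \in P$; since $p$ is absent from the first summand, $p \in Q$.

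The main work is in $(3) \Rightarrow (1)$. Arguing by contradiction, assume $\bwe(\pt(\up a){\sm}\{p\}) = a$ and partition $\pt(\up a){\sm}\{p\}$ as $A \cup Q_0$, where $A = \pt(\up a){\sm}\up p$ and $Q_0 = \{q \in \pt(\up a) : q > p\}$. Write $b = \bwe A$ and $q' = \bwe Q_0$ (with $q' = 1$ if $Q_0 = \varnothing$). Then $b > a$ by essentiality, $q' \geq p$ by construction, and $a = b \we q' \leq p$. Primality of $p$ (in the form $x \we y \leq p \Rightarrow x \leq p$ or $y \leq p$) then gives two cases. The bridge between them is the auxiliary identity $b \we p = a$: every $r \in \pt(\up a)$ satisfies $r \geq b \we p$ (either $r \in A$ so $r \geq b$, or $r \geq p$), hence $b \we p \leq \bwe \pt(\up a) = a$, while the reverse inequality is immediate. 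Thus $b \leq p$ would force $b = b \we p = a$, contradicting $b > a$; and $q' \leq p$ combined with $q' \geq p$ forces $q' = p$, which is impossible if $Q_0 = \varnothing$ (since then $q' = 1 \ne p$) and otherwise exhibits $p$ as the meet of the family $Q_0 \se \pt(L){\sm}\{p\}$ of primes strictly above $p$, contradicting weak coveredness.

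The delicate point, and the main obstacle, is the last step: all three assumptions---essentiality, weak coveredness, and primality of $p$---must be used in tandem, with the identity $b \we p = a$ being the non-obvious bridge that makes primality productive.
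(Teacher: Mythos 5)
Your proof is correct and follows essentially the same route as the paper's: the same cycle of implications, the same splitting of $\pt(\up a){\sm}\{p\}$ into the primes not above $p$ and those strictly above $p$, and the same key identity $\bwe(\pt(\up a){\sm}\up p)\we p=a$ that makes binary primality applicable (the paper's (3)$\Rightarrow$(1) is just the contrapositive of your contradiction argument, and its (2)$\Rightarrow$(3) uses the same identity you do). Your working reading of ``weakly covered'' --- no representation $p=\bwe Q$ with $Q\se\pt(L){\sm}\{p\}$ --- coincides with the one the paper implicitly uses, namely $p\neq\bwe(\pt(\up p){\sm}\{p\})$.
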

\begin{proof}
Let $L$ be a frame, and let $a\in L$ be an element such that it is the meet of the primes above it. Let $p\in \pt(\up a)$.

(1)$\implies$(2). Suppose that $p$ is an absolutely essential prime of $a$, and that $P\se \pt(L)$ is such that $a=\bwe P$. We must have that $P\se \pt(\up a)$ and so $P{\sm}\{p\}\se \pt(\up a){\sm}\{p\}$, from which we deduce that $\bwe \pt(\up a){\sm}\{p\}\leq \bwe P{\sm}\{p\}$. From this fact, and by absolute essentiality of $p$, we deduce that $\bwe P{\sm}\{p\}\nleq a$. As $\bwe P\leq a$, we must have $p\in P$.

(2)$\implies$(3). Suppose that $p\in P$ whenever $\bwe P=a$ for $P\se \pt(L)$. In particular, we have that $\bwe \pt(\up a){\sm}\up p\nleq a$ as $p\notin \pt(\up a){\sm}\up p$. Now suppose towards contradiction that $p$ is not weakly covered, and so that $p=\bwe \pt(\up p){\sm}\{p\}$. We then have that $\bwe \set{q\in \pt(\up a)\mid p\neq q}=\bwe \set{q\in \pt(\up a)\mid p<q}\we \bwe \set{q\in \pt(\up a)\mid p\nleq q}=p\we \bwe \pt(\up a){\sm}\up p\leq a$. We have then contradicted (2).

(3)$\implies$(1). Suppose that $p$ is an essential prime of $a$ and that it is weakly covered. if we had $\bwe \pt(\up a){\sm}\up p\leq p$ then we would have that $\bwe \pt(\up a){\sm}\up p\leq q$ whenever $q\in \pt(\up a)$, contradicting essentiality of $p$. Then, we must have $\bwe \pt(\up a){\sm}\up p\nleq p$. We also have $\bwe\set{q\in \pt(\up a)\mid p<q}\nleq p$ by weak coveredness of $p$. By primality of $p$, this means that $\bwe \pt(\up a){\sm}\up p\we \bwe\set{q\in \pt(\up a)\mid p<q}=\bwe \pt(\up a){\sm}\{p\}\nleq p$, and so $\bwe \pt(\up a){\sm}\{p\}\nleq a$. Thus, $p$ is an absolutely essential prime of $a$.
\end{proof}

Thus, we have called this condition ``absolute" essentiality as opposed to relative essentiality: an essential prime of $a$ is only essential relative to the particular meet of primes $\bwe \pt(\up a){\sm}\set{q\in \pt(\up a)\mid p<q}$; whereas an absolutely essential prime is essential with respect to any meet of primes $\bwe P$ with $\bwe P=a$. It is crucial that for covered primes these two conditions collapse together.

\begin{lemma}\label{coveredess}
If $p$ is a covered prime and $p$ is an essential prime of $a\in L$, then $p$ is also an absolutely essential prime of $a$.
\end{lemma}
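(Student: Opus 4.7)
The plan is to reduce the statement to the already-established implication $(3) \Rightarrow (1)$ of Lemma~\ref{essentialprimes}, which asserts that every weakly covered essential prime of $a$ is in fact absolutely essential. Since we are given outright that $p$ is essential, it suffices to verify the single auxiliary fact that every covered prime is weakly covered; feeding this into the lemma will yield the conclusion immediately.

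To see that coveredness entails weak coveredness, I would argue by contradiction. If $p$ were covered but not weakly covered, then by definition of weak coveredness $p$ would coincide with the meet $\bigwedge\{q\in \pt(L)\mid p<q\}$. Since $p$ is a covered prime, the defining property applied to this presentation as a meet forces $p=q$ for some index $q$ in the indexing family, contradicting the strict inequality $p<q$. Therefore any covered prime is weakly covered, and the hypotheses of Lemma~\ref{essentialprimes}(3) are in force.

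I do not anticipate any substantive obstacle: Lemma~\ref{essentialprimes} has already absorbed the nontrivial content, namely the primality argument that combines $\bigwedge \pt(\up a){\sm}\up p \nleq p$ (coming from essentiality) with $\bigwedge\{q\in \pt(\up a)\mid p<q\}\nleq p$ (coming from weak coveredness) to conclude $\bigwedge\pt(\up a){\sm}\{p\}\nleq p$, and hence $\ne a$. The only residual step is the one-line unwinding of definitions above, and I see no subtlety in the passage between the indexing sets $\pt(\up p)$ and $\pt(\up a)$, since both consist of primes strictly above $p$ once one restricts to those $q$ satisfying $p<q$.
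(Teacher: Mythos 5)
Your proof is correct and takes essentially the same route as the paper: the paper's own proof simply observes that a covered prime is in particular weakly covered and then invokes Lemma~\ref{essentialprimes} (3)$\implies$(1). Your explicit verification that coveredness implies weak coveredness is a sound (and slightly more detailed) unwinding of the step the paper leaves implicit.
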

\begin{proof}
If $p$ is an essential prime of $a$ and it is covered, in particular it is weakly covered and so it is absolutely essential, by Lemma~\ref{essentialprimes}.
\end{proof}

In \cite{Suarez20}, the following is shown. This is a choice-free adaptation of a famous result of \cite{niefield87}.
\begin{proposition}\label{totspaess}
For a frame $L$, we have that $L$ is totally spatial if and only if every element of $L$ is the meet of its essential primes.
\end{proposition}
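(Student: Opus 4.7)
I plan to reduce the proposition to the chain of equivalences, for each prime $p\in\pt(\up a)$,
\[p\text{ is essential for }a\;\iff\; p\to a>a\;\iff\; p\in\bl(a).\]
The first equivalence is a primality calculation. If $p$ is essential, writing $c_p:=\bwe(\pt(\up a)\sm\up p)$, I observe that for each $q\in\pt(\up a)$ either $c_p\leq q$ (when $q\not\geq p$) or $p\leq q$, so $c_p\we p\leq\bwe\pt(\up a)=a$, whence $p\to a\geq c_p>a$. Conversely, for each $q\in\pt(\up a)\sm\up p$, primality of $q$ applied to $p\we(p\to a)\leq a\leq q$ gives $p\to a\leq q$, forcing $c_p\geq p\to a>a$. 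For the second equivalence: if $p\in\bl(a)$, then $(p\to a)\to a=p$; $p\to a=a$ would give $p=a\to a=1$, contradicting primality. Conversely, assume $p\to a>a$ and set $c:=p\to a$; then $c\we p=a$, and for any $y\in L$ with $c\we y\leq a\leq p$, primality of $p$ forces either $c\leq p$ (excluded, as it would give $c=c\we p=a$) or $y\leq p$, so $c\to a=p$, i.e., $(p\to a)\to a=p$.

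\textbf{Backward direction.} Assume every $a\in L$ is the meet of its essential primes. For a sublocale $S$ of $L$ and $s\in S$, each essential prime $p$ of $s$ satisfies $p=(p\to s)\to s$ by the chain above; since sublocales are closed under $x\to(-)$ with codomain in $S$, this places $p$ in $S$. Hence $s=\bwe\set{p\mid p\text{ essential for }s}$ expresses $s$ as a meet of primes of $L$ lying in $S$, which are in particular primes of $S$. Thus every $s\in S$ is a meet of primes of $S$, so $S$ is spatial, and $L$ is totally spatial.

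\textbf{Forward direction.} Assume $L$ is totally spatial. Then the Boolean sublocale $\bl(a)$ is spatial, hence isomorphic to $\ca{P}(X)$ where $X$ is its set of coatoms, and the bottom $a$ of $\bl(a)$ equals the meet of these coatoms. It remains to identify coatoms of $\bl(a)$ with the essential primes of $a$: coatoms of $\bl(a)$ coincide with primes of $\bl(a)$ (standard in Boolean frames), and they are primes of $L$ by a short case analysis using that $\nu_{\bl(a)}$ preserves binary meets (if $y$ is a coatom and $y=x_1\we x_2$ in $L$, each $\nu_{\bl(a)}(x_i)\in\set{y,1}$, forcing $x_i=y$ for some $i$). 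Since $\bl(a)\se\up a$, these primes lie in $\pt(\up a)$, and by the key chain they are exactly the essential primes of $a$. Therefore $a$ is the meet of its essential primes.

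\textbf{Main obstacle.} The technical crux is establishing the key chain, especially the identity $(p\to a)\to a=p$ for essential $p$; primality of $p$ must be deployed with care to bridge the representation-theoretic notion of essentiality and the intrinsic condition of lying in the Boolean sublocale $\bl(a)$.
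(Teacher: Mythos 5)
Your proof is correct, but note that the paper does not actually prove this proposition: it is quoted from \cite{Suarez20} as a choice-free adaptation of Niefield--Rose, so there is no in-paper argument to match. Your central chain --- $p$ essential for $a$ iff $p\to a=\bwe\bigl(\pt(\up a)\sm\up p\bigr)>a$ iff $(p\to a)\to a=p$, i.e.\ $p\in\bl(a)$ --- is exactly the content of the paper's Lemma~\ref{booleans}, which the authors prove by distributing Heyting arrows over the meet $\bwe\pt(\up a)$ and inspecting which conjuncts survive; your version, which first identifies $p\to a$ with $\bwe\bigl(\pt(\up a)\sm\up p\bigr)$ and then uses meet-primality of $p$ directly to get $(p\to a)\to a=p$, is arguably cleaner and avoids the set-manipulation in their computation. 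The two directions are then handled economically: forward, total spatiality makes each $\bl(a)$ a spatial complete Boolean algebra, hence a powerset whose bottom $a$ is the meet of its coatoms, and your nucleus argument correctly shows those coatoms are primes of $L$, hence (by the chain) essential primes of $a$; backward, closure of a sublocale $S$ under $x\to(-)$ with target in $S$ puts every essential prime of $s\in S$ inside $S$, so every sublocale is spatial --- this is a genuinely direct argument, whereas the route suggested by the paper's other citations of \cite{niefield87} goes through ``every nontrivial sublocale has a point.'' Only cosmetic caveats: you implicitly use the standard conventions that $1$ is not prime and that the empty meet is $1$ (needed for $a=1$), and the hypothesis ``$a$ is the meet of its essential primes'' must be read as including that $a$ is a meet of primes at all, so that the notion of essential prime is defined; you use both correctly.
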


Let us prove some general facts about essential and absolutely essential primes.
\begin{lemma}\label{booleans}
For a frame $L$ and an element $a$ which is the meet of the primes above it, we have that $p\in \pt(\up a)$ is an essential prime of $a$ if and only if it is a prime of $\bl(a)$.
\end{lemma}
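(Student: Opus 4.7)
The plan is to leverage the identity
\[ p\ra a = \bwe (\pt(\up a)\sm \up p), \]
valid under the hypothesis that $a = \bwe \pt(\up a)$. Through this, essentiality of $p$ (namely $\bwe(\pt(\up a)\sm \up p)\ne a$) translates into $p\ra a \ne a$, and one then shows this in turn is equivalent to $p\in \bl(a)$. Since $\bl(a)$ is a sublocale of $L$, meets there agree with those in $L$, so any prime $p$ of $L$ lying in $\bl(a)$ is automatically a prime of the sublocale $\bl(a)$; conversely, any prime of a sublocale belongs to that sublocale. Hence the lemma reduces to the equivalence ``$p$ essential iff $p\in \bl(a)$''.

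For the identity, one inclusion is immediate from primality: if $r\in\pt(\up a)\sm\up p$, then $p\we (p\ra a)\leq a \leq r$ together with $p\not\leq r$ force $p\ra a \leq r$, whence $p\ra a \leq \bwe(\pt(\up a)\sm \up p)$. For the reverse inclusion, let $I$ denote the right-hand meet. For each $r\in \pt(\up a)$, either $r\not\geq p$, in which case $p\we I\leq I\leq r$ by definition of $I$, or $r\geq p$, in which case $p\we I\leq p\leq r$. Thus $p\we I\leq \bwe\pt(\up a) = a$, which gives $I\leq p\ra a$.

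It then remains to show that $p\ra a \ne a$ is equivalent to $p=(p\ra a)\ra a$, i.e.\ $p\in\bl(a)$. The backward implication is trivial: otherwise $p = a\ra a = 1$, contradicting primality of $p$. For the forward implication one always has $p\leq (p\ra a)\ra a$; for the reverse inequality, the relation $[(p\ra a)\ra a]\we (p\ra a)\leq a \leq p$ together with primality of $p$ in $L$ yields either $(p\ra a)\ra a \leq p$ (as wanted), or $p\ra a \leq p$, the latter forcing $p\ra a = p\we (p\ra a)\leq a$ and contradicting $p\ra a\ne a$.

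The main obstacle is setting up the identity relating $p\ra a$ to the meet $\bwe(\pt(\up a)\sm \up p)$, which uses in an essential way the hypothesis that $a$ is the meet of the primes above it; once in place, the rest is routine application of primality and basic Heyting calculus.
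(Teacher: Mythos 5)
Your proof is correct and takes essentially the same route as the paper's: both arguments rest on the identity $p\ra a=\bwe\bigl(\pt(\up a)\sm\up p\bigr)$ (valid because $a=\bwe\pt(\up a)$) and on the characterization of membership in $\bl(a)$ via $(p\ra a)\ra a=p$. Your execution is slightly more streamlined in that you reduce both directions to the single equivalence $p\ra a\ne a\iff p\in\bl(a)$ by pure Heyting/primality calculus, whereas the paper expands the relevant Heyting implications as meets of primes and identifies $p$ as the minimum of the resulting set, but the underlying ideas coincide.
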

\begin{proof}
Suppose that $L$ is a frame and that $a\in L$ is the meet of the primes above it. Suppose first that $p$ is an essential prime of $a$. We claim that $p\in \bl(a)$ because we have
\[
p=\bwe \pt(\up a){\sm}\up p\ra a=\bwe \set{\bwe \pt(\up a){\sm}\up p\ra q\mid q\in \pt(\up a)}.
\]
The right hand side is a conjunction, in which every conjunct is either $1$ or a prime above $a$. Then, this is equal to
\[
\bwe \set{q\in \pt(\up a)\mid \bwe \pt(\up a){\up}p\nleq q}
\]
As $p$ is an essential prime, it is certainly in the set $\set{q\in \pt(\up a)\mid \bwe \pt(\up a){\sm}\up p\nleq q}$. It must be the minimal element of this set, as for an element $q$ to be in this set we need to have $p\leq q$. Then, indeed, $p=\bwe \pt(\up a){\sm}\up p\ra a$ and so $p\in \bl(a)$. For the converse, suppose that we have $p\in \bl(a)$. Then, we have that $(p\ra a)\ra a=p$. This means that
\[
p=\bwe \set{(p\ra a)\ra q\mid q\in \pt(\up a)}
\]
The right hand side is again a conjunct of primes, and in particular it is 
\[
\bwe \set{q\in \pt(\up a)\mid p\ra a\nleq q}
\]
Now, let us observe that $p\ra a$ is exactly $\bwe \set{q\in \pt(\up a)\mid p\nleq q}=\bwe \pt(\up a){\sm}\up p$. Then, the expression above equals
\[
\bwe \set{q\in \pt(\up a)\mid \bwe \pt(\up a){\sm}\up p\nleq q}.
\]
Since this expression equals $p$, if we had $\bwe \pt(\up a)\leq p$, we would also have $\bwe \pt(\up a)\leq q$ for every element $q$ of the set $\set{q\in \pt(\up a)\mid \bwe \pt(\up a){\sm}\up p\nleq q}$, as $p$ is a lower bound for this set. This is a contradiction, by the membership condition of this set. The prime $p$ must then be essential.
\end{proof}

\begin{lemma}\label{boolandopen}
For a frame $L$ and for $x,y\in L$ we have that $\bl(x\ra y)=\op(x)\cap \bl(y)$.
\end{lemma}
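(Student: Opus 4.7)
The plan is to prove both inclusions directly from the explicit descriptions
$$\bl(a)=\set{b\to a\mid b\in L}\quad\textrm{and}\quad \op(x)=\set{c\in L\mid x\to c=c},$$
together with the single Heyting identity $a\to(x\to y)=(a\we x)\to y$, which in a frame is just the exponential law. No results beyond basic Heyting arithmetic (and the descriptions of $\bl(-)$ and $\op(-)$ recalled in the preliminaries) are needed.

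For $\bl(x\to y)\se \op(x)\cap \bl(y)$, I would take an arbitrary element $c=a\to(x\to y)$ of $\bl(x\to y)$ and rewrite it, by the exponential law, as $c=(a\we x)\to y$. This already exhibits $c$ as an element of $\bl(y)$. To see $c\in\op(x)$, I would compute $x\to c = x\to((a\we x)\to y)=(x\we a\we x)\to y=(a\we x)\to y=c$, so $c$ satisfies the defining equation of $\op(x)$.

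For the reverse inclusion $\op(x)\cap \bl(y)\se \bl(x\to y)$, I would take $c\in \op(x)\cap \bl(y)$, so $c=b\to y$ for some $b\in L$ and $x\to c=c$. Applying these two facts successively and using the exponential law in the other direction, I would compute $c=x\to c=x\to(b\to y)=(x\we b)\to y=b\to(x\to y)$, which manifestly lies in $\bl(x\to y)$.

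I do not foresee any genuine obstacle; the statement is really just the exponential law wrapped inside the descriptions of the relevant sublocales, and the two containments are symmetric applications of the same calculation in opposite directions.
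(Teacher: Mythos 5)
Your proof is correct and follows essentially the same route as the paper's: both directions are handled by unpacking the explicit descriptions of $\bl(-)$ and $\op(-)$ and applying the exponential law $a\to(x\to y)=(a\wedge x)\to y$, exactly as in the paper's own argument. No discrepancies to report.
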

\begin{proof}
Suppose that $z\in \bl(x\ra y)$. Then, we have that $z=z'\ra (x\ra y)$ for some $z'\in L$. Then, $z=(z'\we x)\ra y$ and so $z\in \bl(y)$. Additionally, we have $x\ra z=x\ra (z'\ra (x\ra y))=(x\we z')\ra (x\ra y)=(x\we z')\ra y=z'\ra (x\ra y)=z$. Then, $z\in \op (x)$. For the converse, suppose that $z\in \op(x)\cap \bl(y)$. Then, we have $x\ra z=z$ and also that $z=z'\ra y$ for some $z'\in L$. Then, $z=x\ra (z'\ra y)=x\we z'\ra y=z'\ra (x\ra y)$, and so $z\in \bl(x\ra y)$.
\end{proof}
\begin{lemma}\label{meetofabsess}
For a frame $L$, we have that it is spatial and every element other than $1$ has an absolutely essential prime above it if and only if every element is the meet of the essential primes above it.
\end{lemma}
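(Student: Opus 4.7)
The plan is to deploy the Boolean sublocale $\bl(a)$ as the central object, exploiting two facts established earlier in the section: essential primes of $a$ are precisely the primes of $\bl(a)$ (Lemma~\ref{booleans}), and the Heyting identity $\bl(b\to a)=\op(b)\cap \bl(a)$ (Lemma~\ref{boolandopen}).

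For the forward direction, I would fix $a\in L$ and set $b=\bigwedge E_a$. By Lemma~\ref{booleans}, $E_a\subseteq\bl(a)$, and since $\bl(a)$ is closed under arbitrary meets, $b\in\bl(a)$ with $a\le b$. Suppose for contradiction that $b>a$; then $b\in\bl(a)$ with $b\not\le a$ forces $b\to a<1$. The hypothesis applied at $b\to a$ yields an absolutely essential prime $p\ge b\to a$. Lemma~\ref{boolandopen} gives $\bl(b\to a)=\op(b)\cap\bl(a)$, and Lemma~\ref{booleans} applied at $b\to a$ puts $p\in\bl(b\to a)$, so $p\in\op(b)\cap\bl(a)$. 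The first inclusion forces $p\not\ge b$ (otherwise $b\to p=1\ne p$, contradicting primality of $p$), while the second says $p$ is essential at $a$. Being absolutely essential at $b\to a$, the prime $p$ is weakly covered by Lemma~\ref{essentialprimes}(3); pairing weak coveredness with essentiality at $a$, a second use of Lemma~\ref{essentialprimes}(3) upgrades $p$ to being absolutely essential at $a$. In particular $p\in E_a$, whence $p\ge\bigwedge E_a=b$, contradicting $p\not\ge b$; hence $b=a$.

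For the reverse direction, spatiality is immediate, since essential primes are primes. For the remaining clause, given $a\ne 1$ the equation $\bigwedge E_a=a$ forces $E_a\ne\emptyset$; to upgrade an essential prime to an absolutely essential one, I would use the meet condition together with Lemma~\ref{booleans} to see that $\bl(a)$ is atomic as a Boolean algebra, and then exhibit an absolutely essential prime among its coatoms, using Lemma~\ref{essentialprimes}(3) to certify weak coveredness.

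The main obstacle is the bookkeeping in the forward direction: absolute essentiality must be tracked at two different base points ($a$ and $b\to a$), broken down into essentiality plus weak coveredness via Lemma~\ref{essentialprimes}(3) at one base point, and then reassembled into absolute essentiality at the other.
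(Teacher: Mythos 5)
Your forward direction is correct and is essentially the paper's own argument: the paper runs the identical contradiction with $b=\bwe \mf{AbsEss}(a)$ in place of your $b=\bwe E_a$ (thereby getting the slightly stronger conclusion that $a$ is the meet of its \emph{absolutely} essential primes), but the mechanism --- pass to $b\ra a\ne 1$, pick an absolutely essential prime $p$ of $b\ra a$, use $\bl(b\ra a)=\op(b)\cap\bl(a)$ together with Lemma~\ref{booleans} to get $b\nleq p$ and $p$ essential at $a$, and contradict $p\geq b$ --- is the same. (Your final ``upgrade'' of $p$ to absolute essentiality at $a$ is not even needed for your choice of $b$: essentiality already puts $p$ in $E_a$.)

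The reverse direction is where the problem lies. Your plan to ``exhibit an absolutely essential prime among the coatoms of $\bl(a)$, using Lemma~\ref{essentialprimes}(3) to certify weak coveredness'' contains no argument for weak coveredness, and none can be supplied: atomicity of $\bl(a)$ only describes $\bl(a)$ internally, whereas weak coveredness is precisely the part of absolute essentiality that depends on how the coatoms sit among \emph{all} primes of $L$. Concretely, take $L=[0,1]$ (the chain already used in the paper's remark on $D$-sublocales). Every element is prime, and $\pt(\up a)\sm \up a=\varnothing$ shows each $a\ne 1$ is an essential prime of itself, so every element is the meet of its essential primes; yet $a=\bwe\bigl(\pt(\up a)\sm\{p\}\bigr)$ for every prime $p\geq a$ (the remaining set still has infimum $a$), so no element other than $1$ has \emph{any} absolutely essential prime above it. Hence the right-to-left implication as stated fails, and your sketch cannot be completed. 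For what it is worth, the paper's own proof of this direction quietly establishes only the weaker claim that every $a\ne 1$ has an \emph{essential} prime above it (plus spatiality), and only the left-to-right implication is invoked later (in Theorem~\ref{m22}), so the issue lies in the statement of the lemma rather than in anything downstream.
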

\begin{proof}
If $L$ is a frame in which every element is the meet of the essential primes above it, in particular every element $a\in L$ with $a\neq 1$ must have at least an essential prime above it. The frame is also spatial. For the other implication, suppose that $L$ is spatial and that every element other than $1$ has an absolutely essential prime above it. Let $a\in L$ and let $\mf{AbsEss}(a)$ be the collection of absolutely essential primes of $a$. Suppose towards contradiction that $\bwe\mf{AbsEss}(a)\nleq a$. Then, $\bwe \mf{AbsEss}(a)\ra a\neq 1$ and so there is an absolutely essential prime $q$ of $\bwe \mf{AbsEss}(a)\ra a$. This means that we have $q\in \op (\bwe \mf{AbsEss}(a))\cap \bl(a)$ (see Lemma~\ref{booleans} and \ref{boolandopen}) and that $q$ is weakly covered. Since $q\in \op(\bwe \mf{AbsEss}(a))$ we have $q\notin \cl(\bwe \mf{AbsEss}(a))$, and so $\bwe \mf{AbsEss}(a)\nleq q$. Since $q\in \bl(a)$ and so it is an absolutely essential prime of $a$. This contradicts that $\bwe \mf{AbsEss}(a)\nleq q$.
\end{proof}

We are ready to prove the main theorems of this section.

\begin{theorem}\label{m1}
For a frame $L$, the following are equivalent.
\begin{enumerate}[\normalfont(1)]
    \item All sublocales of $L$ are $T_D$-spatial;
    \item All \donotbreakdash{$D$}sublocales of $L$ are \donotbreakdash{$T_D$}spatial;
    \item $\mathfrak{M}\circ \pt_D$ is the identity on $\mf{S}_D(L)$;
    \item There is an isomorphism  $\SSS_D(L)\cong \mathcal{P}(\pt_D(L))$;
    \item $\SSS_D(L)$ is spatial and Boolean (i.e. a complete and atomic Boolean algebra);
    \item $\SSS_D(L)=\SSS_b(L)$ and $L$ is \donotbreakdash{$T_D$}spatial.
    \item Every nonzero sublocale of $L$ contains a covered prime in itself.
\end{enumerate}

\end{theorem}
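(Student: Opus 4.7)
I would split the equivalences into two blocks: the chain $(1)\Rightarrow(2)\Leftrightarrow(3)\Leftrightarrow(4)\Leftrightarrow(5)\Leftrightarrow(6)$, which is essentially formal manipulation of the machinery already developed, together with $(2)\Rightarrow(7)\Rightarrow(1)$ to close the loop.

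The first block is routine: $(1)\Rightarrow(2)$ is immediate since $D$-sublocales are sublocales, and $(2)\Leftrightarrow(3)$ is built into Proposition~\ref{mainadjunction} (the fixpoints of $\mathfrak{M}\circ\pt_D$ are exactly the $T_D$-spatial $D$-sublocales). Since $\pt_D\circ\mathfrak{M}$ is always the identity, $(3)\Leftrightarrow(4)$ is then formal: the adjunction becomes an inverse pair. $(4)\Leftrightarrow(5)$ reduces to the standard fact that a complete lattice is isomorphic to a powerset iff it is spatial and Boolean. For $(5)\Leftrightarrow(6)$, since $\SLd$ is always dense in $\SL$ (corollary following Proposition~\ref{impr}), ``Boolean'' forces $\SLd=\SLb$ by uniqueness of the Booleanization as the unique Boolean dense sublocale, and spatiality of $\SLb$ is precisely $T_D$-spatiality of $L$ by Theorem~\ref{ss}.

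For $(2)\Rightarrow(7)$, given a nonzero sublocale $S$ I pick $x\in S$ with $x\ne 1$. The Boolean sublocale $\bl(x)=\set{y\to x\mid y\in L}$ is contained in $S$ (by closure of $S$ under the Heyting operator from $L$), is smooth, and hence is a $D$-sublocale. By $(2)$ it is $T_D$-spatial, so it admits a covered prime $p$; since $\bl(x)\in \SLd$ we get $p\in\pt_D(L)$, and because meets in $S$ coincide with meets in $L$ a short verification promotes this to $p\in\pt_D(S)$.

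The main effort is $(7)\Rightarrow(1)$. Given a sublocale $S\subseteq L$ and elements $s<t$ in $S$, I form the sublocale $V=S\cap \cl(s)\cap \op(t)$ of $L$. Viewed inside $\SSS(S)$, $V$ is the locally closed sublocale $\cl^S(s)\cap \op^S(t)$, complemented in $\cl^S(s)$ by $\cl^S(t)$; hence it is nonzero exactly because $s<t$, and smooth in $\SSS(S)$. So $V\in \SSS_b(S)\subseteq \SSS_D(S)$, i.e.\ $V$ is a $D$-sublocale of $S$. Applying $(7)$ to $V$ produces $q\in \pt_D(V)\subseteq \pt_D(S)$, and by construction $q\geq s$ while $t\not\leq q$. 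This shows every $s\in S$ is the meet of the covered primes of $S$ lying above it, which by Lemma~\ref{me} means $S$ is $T_D$-spatial. I expect this last implication to be the delicate point: one must transport the $T_D$-spatialization argument from $L$ into an arbitrary sublocale $S$, exploiting the identification of sublocales of $S$ with sublocales of $L$ contained in $S$ and the fact that locally closed pieces are automatically $D$-sublocales of the ambient sublocale.
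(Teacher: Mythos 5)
The main gap is in your step (2)$\Rightarrow$(7). You assert that for $x\in S$ the Boolean sublocale $\bl(x)$ ``is smooth, and hence is a $D$-sublocale''. Neither claim holds in general: being a Boolean sublocale of $L$ and belonging to the Booleanization $\SSS_b(L)$ of $\SSS(L)$ are quite different things. By Lemma~\ref{a'} and Corollary~\ref{a}, $\bl(p)$ is smooth precisely when $p$ is a \emph{covered} prime; and for $L=[0,1]$ one has $\bl(0)=\set{0,1}$ with $\pt_D(\bl(0))=\set{0}\not\subseteq\pt_D(L)=\varnothing$, so $\bl(0)$ is a Boolean sublocale that is neither smooth nor a \donotbreakdash{$D$}sublocale. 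Consequently hypothesis (2) cannot be applied to $\bl(x)$ and the implication is not established as written. The repair is short: for a nonzero sublocale $S$, either $S\in\SSS_D(L)$, in which case (2) makes $S$ \donotbreakdash{$T_D$}spatial and hence $\pt_D(S)\ne\varnothing$ since $S=\mathfrak{M}(\pt_D(S))$ is nonzero; or $S\notin\SSS_D(L)$, which means $\pt_D(S)\not\subseteq\pt_D(L)$ and in particular $\pt_D(S)\ne\varnothing$. (The paper instead proves (6)$\Rightarrow$(7), combining Proposition~\ref{charDsc} with \donotbreakdash{$T_D$}spatiality of $L$.)

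Two further remarks. Your (7)$\Rightarrow$(1) is correct and genuinely different from the paper's: the paper notes that (7) is hereditary, reduces to showing that $L$ itself is \donotbreakdash{$T_D$}spatial, and argues by contradiction with a complemented sublocale containing $\spa_D(L)$ and zero-dimensionality of $\SSS(L)^{op}$; you instead separate $s<t$ inside an arbitrary $S$ by the nonzero locally closed sublocale $\cl_S(s)\cap\op_S(t)$, which is automatically a \donotbreakdash{$D$}sublocale of $S$, and conclude via Lemma~\ref{me}. Both work, and yours has the merit of exhibiting the covered primes of $S$ directly. On the other hand, your ``formal'' block overstates two reverse directions: (5)$\Rightarrow$(4) needs the identification of the primes of $\SSS_D(L)^{op}$ with $\set{\bl(p)\mid p\in\pt_D(L)}$ (Lemma~\ref{covprimofSLd}), and (4)$\Rightarrow$(3) does not follow formally from the existence of an abstract isomorphism. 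The paper sidesteps this by proving only the one-way chain (3)$\Rightarrow$(4)$\Rightarrow$(5)$\Rightarrow$(6) and closing the cycle through (7); as written, your argument's logical cycle leans on (4)$\Rightarrow$(3), so you should either justify that direction or close the loop via (6)$\Rightarrow$(2) (Theorem~\ref{ss} together with Proposition~\ref{locandglob}).
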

\begin{proof}(1) obviously implies (2).\\[2mm]
(2) is equivalent to (3). This holds by Lemma~\ref{tdsub}\\[2mm]
(3)$\implies$(4). The isomorphism is obtained by composing $\mathfrak{M}\circ \pt_D$ and the isomorphism $(\mathfrak{M}\circ \pt_D)[\SLd]\ra (\pt_D\circ \mathfrak{M})[\ca{P}(\pt_D(L))]$.\\[2mm]
(4)$\implies$(5). This is clear.\\[2mm]
(5)$\implies$(6). Suppose that $\SLd$ is spatial and Boolean. 
If $\SSS_D(L)$ is Boolean, since it is also dense, it must coincide with the unique Boolean dense sublocale of $\SSS(L)$, namely $\SSS_b(L)$. Now, if $\SSS_D(L)=\SSS_b(L)$ is also spatial, by Theorem~\ref{ss} we conclude that $L$ is \donotbreakdash{$T_D$}spatial.\\[2mm]
(6) $\implies$ (1). If we have $\SLb=\SLd$, and if $L$ is \donotbreakdash{$T_D$}spatial, by Theorem~\ref{ss} we have that $\SLd$ is spatial, and so by Proposition~\ref{locandglob} all \donotbreakdash{$D$}sublocales are \donotbreakdash{$T_D$}spatial.\\[2mm]
(6) $\implies$ (7). According to Proposition~\ref{charDsc}, every non-smooth sublocale has a covered prime in itself. But $L$ being \donotbreakdash{$T_D$}spatial, say $L=\Omega(X)$ with $X$ a \donotbreakdash{$T_D$}space, every (nonzero) smooth sublocale of it is induced by a subspace $A\subseteq X$ and hence it contains a covered prime of $L$, in particular a covered prime in itself. Hence every nonzero sublocale contains a covered prime in itself.\\[2mm]
(7)$\implies$(1). Clearly, the condition (7) is hereditary with respect to any sublocale. Hence, it suffices to show that $L$ is  \donotbreakdash{$T_D$}spatial.  If for all complemented sublocale $C\subseteq L$ such that $\spa_D(L)\subseteq C$ one has $C=L$, then by zero-dimensionality of $\SSS(L)^{op}$, it follows that $L=\spa_D(L)$ is $T_D$-spatial. Hence assume by way of contradiction that there is a complemented sublocale $C\subseteq L$ such that $C\ne L$ and $\spa_D(L)\subseteq C$.  Since $C\ne L$, then $C^c\ne 0$, so by assumption there is a $p\in \pt_D(C^c)$ with $ \bl(p)\subseteq C^c$. But $C^c$ is complemented; in particular it is a $D$-sublocale, hence $p\in \pt_D(C^c)\subseteq \pt_D(L)$.  But then $\bl(p)\subseteq \spa_D(L)\subseteq C$, whence $\bl(p)\subseteq C\cap C^c=0$, contradiction.
\end{proof}

We now move on to looking at the stronger condition of a frame being totally spatial and \emph{strongly} $T_D$-spatial. To see that this is condition is not equivalent to those in Theorem \ref{m1} above, consider Example 7.8 in  \cite{avila2019frame}.

\begin{theorem}\label{m22}
For a frame $L$, the following are equivalent.
\begin{enumerate}[\normalfont(1)]
    \item The frame $L$ is totally spatial and all its primes are covered;
    \item  The frame $L$ is totally spatial and strongly \donotbreakdash{$T_D$}spatial;
        \item All sublocales of $L$ are strongly \donotbreakdash{$T_D$}spatial;
    \item There is an isomorphism $\SSS(L)\cong \mathcal{P}(\pt_D(L))$;
    \item The coframe $\SL$ is spatial and Boolean;
    \item Every element of $L$ is the meet of the covered essential primes above it;
      \item Every element of $L$ is the meet of the covered absolutely essential primes above it;
    \item The frame $L$ is spatial and every element other than $1$ has a covered absolutely essential prime above it;
    \item Every nonzero sublocale of $L$ contains a covered prime of $L$.
\end{enumerate}
\end{theorem}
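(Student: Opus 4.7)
I propose to establish the main cycle $(1)\Rightarrow (2)\Rightarrow (3)\Rightarrow (4)\Rightarrow (5)\Rightarrow (1)$ and then connect conditions (6)--(9) to it by shorter side arguments. The implication $(1)\Rightarrow (2)$ is immediate from the definition of strong \donotbreakdash{$T_D$}spatiality, since a totally spatial frame is in particular spatial. For $(2)\Rightarrow (3)$, fix a sublocale $S$: total spatiality makes $S$ spatial; primes of $S$ coincide with primes of $L$ lying in $S$ (since the associated nucleus preserves binary meets); and any covered prime of $L$ lying in $S$ is automatically covered in $S$, so $S$ is strongly \donotbreakdash{$T_D$}spatial. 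For $(3)\Rightarrow (4)$: condition (3) forces $L$ itself to be strongly \donotbreakdash{$T_D$}spatial, so all primes of $L$ are covered and $\SL = \SLd$ by the table; moreover every sublocale is a \donotbreakdash{$T_D$}spatial \donotbreakdash{$D$}sublocale, so condition (2) of Theorem~\ref{m1} holds, and its condition (4) yields $\SL = \SLd \cong \mathcal{P}(\pt_D(L))$. The step $(4)\Rightarrow (5)$ is obvious. For $(5)\Rightarrow (1)$: since a Boolean frame is its own Booleanization, a spatial Boolean $\SL$ coincides with $\SLb$; spatiality of $\SLb$ then yields \donotbreakdash{$T_D$}spatiality of $L$ by Theorem~\ref{ss}; the chain $\SLb \subseteq \SLd \subseteq \SL$ forces $\SL = \SLd$, so all primes are covered; and a spatial Boolean coframe is atomic and hence self-dual, so $\SL^{op}$ is also spatial, meaning $L$ is totally spatial.

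For the side equivalences involving covered (absolutely) essential primes: $(1)\Leftrightarrow (6)$ combines Proposition~\ref{totspaess} (total spatiality is equivalent to every element being a meet of essential primes) with the covered-primes hypothesis. The equivalence $(6)\Leftrightarrow (7)$ follows from Lemma~\ref{coveredess}: for a covered prime, essentiality coincides with absolute essentiality, so the two classes of primes are identical. For $(7)\Leftrightarrow (8)$, the forward direction is clear (the meet in (7) is nonempty whenever $a \neq 1$). For the converse, adapt the proof of Lemma~\ref{meetofabsess}: if $a$ falls strictly below the meet $b$ of its covered absolutely essential primes, then $b \to a \neq 1$ admits a covered absolutely essential prime $q$ above it by (8); Lemmas~\ref{booleans} and~\ref{boolandopen} then show $q \in \op(b) \cap \bl(a)$, which exhibits $q$ as a covered essential (and hence by Lemma~\ref{coveredess} absolutely essential) prime of $a$. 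This forces $b \le q$, contradicting $q \in \op(b)$.

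For $(1)\Leftrightarrow (9)$: the forward direction is immediate, since under (1) any nonzero sublocale is spatial and its primes coincide with covered primes of $L$. For $(9)\Rightarrow (1)$: applying (9) to $\bl(p) = \{1, p\}$ for each $p \in \pt(L)$ forces $p$ to be covered; and if some sublocale $S$ were not spatial, then $S \setminus \spa(S) \neq 0$ would contain a covered prime of $L$ by (9), yet any prime of $L$ lying in $S$ is a prime of $S$ and hence already in $\spa(S)$, contradiction. Thus $L$ is totally spatial with all primes covered.

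The main technical obstacle is the implication ``(6) implies every prime is covered'' (needed for $(6)\Rightarrow (1)$). Applied to a prime $p$, condition (6) --- equivalently (7) via Lemma~\ref{coveredess} --- writes $p = \bigwedge A$, where $A$ is the set of covered absolutely essential primes of $p$. Primality of $p$ together with absolute essentiality of each $q \in A$ forces $|A| \le 1$: indeed, absolute essentiality yields $\bigwedge (A \setminus \{q\}) \ge \bigwedge (\pt(\up p) \setminus \{q\}) > p$, and then $(\bigwedge(A \setminus \{q\})) \wedge q = p$ combined with primality implies $\bigwedge (A \setminus \{q\}) \le p$ or $q \le p$, both impossible. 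Hence $A = \{p\}$ and $p$ is covered.
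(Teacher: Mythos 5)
Your overall architecture is sound and genuinely different from the paper's: the paper runs a single long cycle $(1)\Rightarrow\cdots\Rightarrow(8)\Rightarrow(1)$ with $(8)\Leftrightarrow(9)$ attached, whereas you close a short cycle through $(1)$--$(5)$ and hang $(6)$--$(9)$ off condition $(1)$. Most local steps check out, and some are improvements: your $(5)\Rightarrow(1)$ (via $\SL=\SLb$, Theorem~\ref{ss}, and the chain $\SLb\subseteq\SLd\subseteq\SL$) is a clean alternative to the paper's $(5)\Rightarrow(6)$, and your direct argument that $(6)$ forces every prime $p$ to be covered is correct and self-contained --- though it can be shortened: for $q\in A$ with $q\ne p$ one has $p\in\pt(\up p)\smallsetminus\{q\}$, so $\bwe\bigl(\pt(\up p)\smallsetminus\{q\}\bigr)=p$, contradicting absolute essentiality of $q$ outright; also your ``both impossible'' should read ``hence $q=p$'', since $q\le p$ is not impossible when $q=p$.

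The one genuine gap is in $(9)\Rightarrow(1)$, total spatiality. You take a covered prime $q$ of $L$ in $S\smallsetminus\spa(S)$, observe that $q\in\pt(S)$ and hence $q\in\spa(S)$, and declare a contradiction. But $S\smallsetminus T$ is the co-Heyting difference, not a set-theoretic one: when $T$ is not complemented in $S$ one can perfectly well have $(S\smallsetminus T)\cap T\ne 0$, so ``$q\in\spa(S)$ and $q\in S\smallsetminus\spa(S)$'' is not by itself contradictory. The coveredness of $q$ is exactly what rescues the argument, and you must invoke it: by Lemma~\ref{a'} the sublocale $\bl(q)$ is complemented, whence $\bl(q)\subseteq S\smallsetminus\spa(S)\subseteq S\smallsetminus\bl(q)\subseteq\bl(q)^{c}$, forcing $\bl(q)=0$, a contradiction --- this is precisely the computation in the paper's proof of Proposition~\ref{charDsc}. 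Alternatively, skip $S\smallsetminus\spa(S)$ altogether: under $(9)$ every nontrivial sublocale has a point, which yields total spatiality by the criterion of \cite{niefield87} that the paper already invokes twice.
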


\begin{proof}
(1)$\implies$(2). If $L$ is totally spatial, then in particular it is spatial; if additionally all its primes are covered the frame is also strongly \donotbreakdash{$T_D$}spatial.\\[2mm] 
(2)$\implies$(3). If $L$ is totally spatial, then every sublocale is spatial, so every element of $S$ is a meet of primes for each $S\in \SSS(L)$. But every $p\in\pt(S)$ is a covered prime in $L$, and in particular in $S$, so every element of $S$ is a meet of covered primes in $S$. Now use Lemma~\ref{me}.\\[2mm]
(3)$\implies$(4). If all sublocales of $L$ are strongly \donotbreakdash{$T_D$}spatial, in particular all of the primes of $L$ are covered and so we have $\SLd=\SL$. Furthermore, all of the sublocales being strongly \donotbreakdash{$T_D$}spatial means, by commutativity of the main square of this section, that we have an isomorphism $\SLd\cong \ca{P}(\pt_D(L))$.\\[2mm] 
(4)$\implies$(5). This is clear.\\[2mm]
(5)$\implies$(6). If $\SL$ is spatial, the frame $L$ is totally spatial, and so every element is the meet of its essential primes by Lemma~\ref{totspaess}. Furthermore, if $\SL$ is Boolean, in particular all two-element sublocales are complemented, and so every prime is covered.\\[2mm]
(6)$\implies$(7). This follows from Lemma~\ref{essentialprimes} (if a prime is essential for one element and it is covered, it is also absolutely essential).\\[2mm]
(7)$\implies$(8). If all elements are the meets of the absolutely essential primes above it, then in particular this must be true for elements other than $1$, and so their corresponding meet of absolutely essential primes must be nonempty.\\[2mm]
(8) and (9) are equivalent. If (8) holds, then by Lemma~\ref{booleans} all nonzero Boolean sublocales have at least a point and since all sublocales are unions of Boolean sublocales this also implies that all nonzero sublocales have at least a point. For the converse, suppose that every nonzero sublocale of $L$ has a covered prime. In particular, every Boolean sublocale $\bl(a)$ with $a\neq 1$ has a prime $p\in \bl(a)$, and this by Lemma~\ref{booleans} means that $a$ has an essential prime which is covered. But since $p$ is covered, by Lemma~\ref{coveredess} this implies that it is also an absolutely essential prime of $a$. \\[2mm] 
(8)$\implies$(1). Suppose that $L$ is spatial, and that every element other than $1$ has a covered absolutely essential prime above it. By Lemma~\ref{meetofabsess}, we know that the frame is totally spatial, as every element is the meet of the absolutely essential primes above it and so, a fortiori, it is the meet of the essential primes above it. Let us show that all primes of $L$ are covered. Every prime $p$ has an absolutely essential prime and so, in particular, the meet $\bwe \{p\}$ has an essential prime, which must be $p$ itself. Furthermore, our assumption also tells us that the absolutely essential prime $p$ must be covered. 
\end{proof}

\section{The relation between $\SLd$ and $\spa[\SL]$}

We begin this section with a non-spatial version of Theorem~\ref{ssss}, that is, we explore locales in which every prime is covered. Note that this is the $T_D$ analogue of the point-free notion of $T_1$ locale introduced by Rosický and Šmarda \cite{T1loc}. 
\begin{theorem}\label{prcov}
For a frame $L$, the following are equivalent.
\begin{enumerate}[(1)]
    \item All primes of $L$ are covered;
    \item $\spa[\mf{S}(L)]\se \mf{S}_b(L)$;
 \item $\SSS_D(L)=\SSS(L)$;
 \item $\SSS_D(L)$ is closed under arbitrary intersections in $\SSS(L)$;
   \item $\spa[\SSS(L)]\subseteq \SSS_D(L)$.
\end{enumerate}
\end{theorem}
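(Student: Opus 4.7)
The plan is to establish all five equivalences by building a web of implications pivoting on~(1). The main ingredients are Lemma~\ref{a'} (so that $\bl(p)$ is complemented in $\SL$ iff $p$ is covered), the zero-dimensionality of $\SL^{op}$, the standard fact that a prime of a sublocale is always a prime of the ambient frame, and the trivial observation that any two-element chain is spatial, so that $\bl(p)\in \spa[\SL]$ for every prime $p$ of $L$.

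First I would prove $(1)\Rightarrow(3)$: if every prime of $L$ is covered, then for any sublocale $S$ we have $\pt_D(S)\subseteq \pt(S)\subseteq \pt(L)=\pt_D(L)$, so every sublocale is a $D$-sublocale. The implication $(3)\Rightarrow(4)$ is immediate. For $(4)\Rightarrow(3)$ I would use the zero-dimensionality of $\SL^{op}$: every sublocale is an intersection of sublocales of the form $\cl(a)\vee\op(b)$; each of these is complemented and hence lies in $\SLb\subseteq\SLd$ (using also closure of $\SLd$ under finite joins, Proposition~\ref{jjjj}). Closure of $\SLd$ under intersections then forces $\SL=\SLd$. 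To close the loop, $(3)\Rightarrow(1)$ is obtained by picking any prime $p$: the sublocale $\bl(p)\in \SL=\SLd$, and since the only prime of $\bl(p)=\{1,p\}$ is $p$ itself (trivially covered within $\bl(p)$), we conclude $p\in \pt_D(L)$.

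For the two conditions involving $\spa[\SL]$, note first that $(1)\Rightarrow(5)$ is a direct consequence of $(1)\Rightarrow(3)$. Conversely, $(5)\Rightarrow(1)$ is obtained by observing that for any prime $p$ the sublocale $\bl(p)$ is a two-element chain, hence spatial, so $\bl(p)\in\spa[\SL]\subseteq\SLd$ and therefore $p$ is covered. For $(1)\Rightarrow(2)$, let $S$ be a spatial sublocale; then $S=\bve_{p\in\pt(S)}\bl(p)$, and since each $p\in \pt(S)$ is also a prime of $L$, it is covered by assumption, whence each $\bl(p)$ is complemented in $\SL$ by Lemma~\ref{a'}. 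Thus $S$ is a join of complemented sublocales, i.e. $S\in\SLb$. Finally, $(2)\Rightarrow(1)$: for any prime $p$ the spatial sublocale $\bl(p)$ lies in $\SLb$, so there is a family of complemented sublocales $\{S_i\}_{i\in I}$ with $\bl(p)=\bve_i S_i$ and each $S_i\subseteq\bl(p)=\{1,p\}$. The only sublocales of $L$ contained in $\bl(p)$ are $\{1\}$ and $\bl(p)$ itself, so unless the join collapses to $\{1\}$ (impossible since $p\ne 1$) some $S_i$ must equal $\bl(p)$, making $\bl(p)$ complemented and $p$ covered.

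The main obstacle, I think, is the last step: one must verify that the decomposition $\bl(p)=\bve_i S_i$ into complemented sublocales actually forces $\bl(p)$ to appear among the summands. This relies on the explicit formula $\bve_i S_i=\bigset{\bwe A\mid A\subseteq \bigcup_i S_i}$ for joins in $\SL$, together with the observation that within $\{1,p\}$ the element $p$ can only arise as a meet if $p$ itself lies in $\bigcup_i S_i$. The rest of the argument is a bookkeeping exercise around the standing inclusions $\SLb\subseteq\SLd\subseteq\SL$ and the duality between coveredness of primes and complementedness of the corresponding one-point sublocales provided by Lemma~\ref{a'}.
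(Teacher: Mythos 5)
Your proof is correct and takes essentially the same route as the paper's: both hinge on Lemma~\ref{a'}/Corollary~\ref{a} (complementedness of $\bl(p)$ versus coveredness of $p$), the zero-dimensionality of $\SL^{op}$ for $(4)\Rightarrow(3)$, and the facts that $\bl(p)$ is spatial and that primes of a sublocale are primes of the ambient frame. The only difference is organizational: you argue radially around (1) (e.g. $(1)\Rightarrow(3)$ directly via $\pt_D(S)\subseteq\pt(S)\subseteq\pt(L)=\pt_D(L)$, and you re-derive the needed direction of Corollary~\ref{a} inside $(2)\Rightarrow(1)$), whereas the paper runs the single cycle $(1)\Rightarrow(2)\Rightarrow(3)\Rightarrow(1)$ with $(3)\Leftrightarrow(4)$ and (5) folded in.
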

\begin{proof}
Suppose that (1) holds. Then, (2) holds by Corollary \ref{a}.  Suppose that (2) holds and let $S$ an arbitrary sublocale. If $p\in \pt_D(S)$, then in particular $p\in \pt(S)$ and so $\mathfrak{b}(p)\in \spa[\SSS(L)]\subseteq \SSS_b(L)$. By Lemma~\ref{a'}, one has that $p$ is covered in $L$, and so $S\in \SSS_D(L)$. That (3) implies (4) is obvious and (4) implies (3) because every sublocale is an intersection of complemented sublocales (and hence of $D$-sublocales). Assume now that (3) holds and let $p$ be a prime. Then $\mathfrak{b}(p)\in \spa[\SSS(L)]\subseteq\SSS_D(L)$ and so $p\in \pt_D(\mathfrak{b}(p))\subseteq \pt_D(L)$. Hence (1) follows.
\end{proof}

We can use $\SSS_D(L)$ for characterizing a number of well-known properties for a frame $L$. For example we have the following (but see also Theorem~\ref{m1} or Theorem~\ref{prcov}).

\begin{proposition}
For a frame $L$, the following are equivalent:
\begin{enumerate}[\normalfont(1)]
\item $L$ is totally spatial\textup;
\item $\SSS_D(L)\subseteq \spa[\SSS(L)]$\textup.
\end{enumerate}
\end{proposition}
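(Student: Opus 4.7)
The forward direction $(1)\Rightarrow(2)$ is immediate: if $L$ is totally spatial then $\SL=\spa[\SL]$, so $\SLd\subseteq\SL=\spa[\SL]$.

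For $(2)\Rightarrow(1)$, assume $\SLd\subseteq\spa[\SL]$. My first step would be to extract spatiality of $L$ as a warm-up. Since $\SLd$ is dense in $\SL$, it contains the smallest dense subcolocale $\SLb$ (as noted right after the corollary to Proposition~\ref{impr}), so $\SLb\subseteq\SLd\subseteq\spa[\SL]$; Corollary~\ref{sp} then yields that $L$ is spatial.

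To upgrade this to \emph{totally} spatial, I would pick an arbitrary sublocale $T\in\SL$ and aim to prove $\spa(T)=T$. The strategy is to analyse the coframe difference $R:=T\sm\spa(T)$ and show it is pointless. Since pointless sublocales belong to $\SLd$ (item~(2) of the families listed as being contained in $\SLd$), this would give $R\in\SLd\subseteq\spa[\SL]$; and since the only spatial pointless sublocale is $0=\{1\}$ (its spatialization being $\mathfrak{M}(\varnothing)=\{1\}$), we would conclude $R=0$. By property~(2) of the difference this forces $T\subseteq\spa(T)$, and combined with the trivial inclusion $\spa(T)\subseteq T$ we get $T=\spa(T)$, i.e.\ $T$ is spatial.

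The main obstacle, as I see it, is proving pointlessness of $R$. Any prime $p$ of $R$ is also a prime of $L$ (primes of a sublocale are primes of the ambient frame) lying in $T$, so $p\in\pt(T)\subseteq\mathfrak{M}(\pt(T))=\spa(T)$, and in particular $\bl(p)\subseteq\spa(T)$. Using the identity $(A\sm B)\sm C=A\sm(B\vee C)$ (an easy consequence of the defining adjunction of the coframe difference), one computes $R\sm\bl(p)=T\sm(\spa(T)\vee\bl(p))=T\sm\spa(T)=R$. When $p$ is a \emph{covered} prime, Lemma~\ref{a'} makes $\bl(p)$ complemented in $\SL$, so property~(3) of the difference rewrites this as $R=R\cap\bl(p)^{\#}$, forcing $R\cap\bl(p)=0$ and contradicting $p\in R\cap\bl(p)$. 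The subtle case is that of a non-covered prime, where $\bl(p)$ fails to be complemented; handling it, I expect, requires exploiting the newly established spatiality of $L$ to write $p=\bigwedge Y$ for some $Y\subseteq\pt(L)$ with $p\notin Y$, and combining this with the join-stability of $\SLd$ and the zero-dimensionality of $\SL^{op}$ to reduce to the covered situation.
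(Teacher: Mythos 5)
Your $(1)\Rightarrow(2)$ direction and the extraction of plain spatiality from $\SSS_b(L)\se\SLd\se\spa[\SL]$ are fine, but the heart of your $(2)\Rightarrow(1)$ argument rests on an unproved claim: that $R=T\sm\spa(T)$ is pointless for an arbitrary sublocale $T$. You settle this only when the prime $p\in R$ is covered, and you explicitly defer the non-covered case to a hoped-for ``reduction to the covered situation''. That deferral is exactly where the difficulty lives. For a prime $p$ that is not covered \emph{in $T$}, say $p=\bigwedge_i t_i$ with $t_i\in T$ and $t_i>p$, any sublocale $V$ with $T\se\bl(p)\vee V$ must contain every $t_i$ (an element of $\bl(p)\vee V$ is of the form $v$ or $p\wedge v$ with $v\in V$, and the second form is $\leq p$, so it cannot equal $t_i$), hence contains $p=\bigwedge_i t_i$; thus $p\in T\sm\bl(p)$, and the inclusion $R\se T\sm\bl(p)$ that drives your covered-case argument yields no contradiction. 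Spatiality of $L$ does not rescue this: non-covered primes persist in totally spatial frames (every complete chain is totally spatial, yet $0\in[0,1]$ is a non-covered prime). Worse, your claim in full generality would in particular show that a frame with no nontrivial pointless sublocales is totally spatial --- which is precisely the nontrivial theorem of Niefield and Rosenthal that the paper's proof imports as a citation. So your route silently assumes something at least as strong as the key external input.

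The short path, which is the one the paper takes, uses hypothesis (2) much earlier: any pointless sublocale $S$ satisfies $\pt_D(S)\se\pt(S)=\varnothing$ and is therefore automatically a \donotbreakdash{$D$}sublocale, so (2) forces every pointless sublocale to be spatial and hence equal to $\mathfrak{M}(\varnothing)=\{1\}$; then $L$ has no nontrivial pointless sublocales, and \cite[p.~269]{niefield87} gives total spatiality. If you wish to keep your decomposition-by-difference strategy, you must either prove the pointlessness of $T\sm\spa(T)$ in general (essentially reproving Niefield--Rosenthal) or replace that step by the citation; as written, the argument is incomplete.
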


\begin{proof}
(1)$\implies$(2). This is trivial since if $L$ is totally spatial then $\spa[\SSS(L)]=\SSS(L)$.\\[2mm]
(2)$\implies$(1). By the observation preceding the proposition, every pointless sublocale of $L$ is in $\SSS_D(L)$, and so by assumption, every pointless sublocale is spatial. Thus $L$ has no nontrivial pointless sublocales, i.e. every nontrivial sublocale of $L$ has a point. By \cite[p.~269]{niefield87}, it follows that $L$ is totally spatial. \end{proof}

Next we explore the inclusion  $\SSS_D(L)\subseteq \SSS_b(L)$ (note that the inclusion is actually equivalent to the equality $\SSS_D(L)=\SSS_b(L)$, since every frame has a unique Boolean and dense sublocale). We first give a necessary condition for it:

\begin{proposition}\label{TSifSC}
If  $\SSS_D(L)\subseteq \SSS_b(L)$ then $\spa(L)$ is totally spatial.
\end{proposition}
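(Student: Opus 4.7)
My strategy is to show that under the hypothesis every non-trivial sublocale $T$ of $L$ contained in $\spa(L)$ contains a prime of $L$; total spatiality of $\spa(L)$ will then follow from the standard criterion (going back to \cite{niefield87}) that a frame is totally spatial precisely when every non-trivial sublocale contains a prime. Note first that since $\SSS_D(L)$ and $\SSS_b(L)$ are both dense subcolocales of $\SSS(L)$ and $\SSS_b(L)$ is the least dense one, the hypothesis in fact gives the equality $\SSS_D(L)=\SSS_b(L)$.

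The plan is to argue by contradiction. Take $T\subseteq \spa(L)$ non-trivial with $\pt(T)=\varnothing$. Since $\pt_D(T)\subseteq \pt(T)=\varnothing$, $T$ is a $D$-sublocale, hence by the hypothesis it is smooth: $T=\bigvee_j C_j$ with $C_j=\op(a_j)\cap \cl(b_j)$ locally closed and $C_j\subseteq T\subseteq \spa(L)$. I would then show that each such $C_j$ is spatial as a frame: given $x\in C_j$ write $x=\bigwedge\{p\in \pt(L):p\geq x\}$ (using $x\in \spa(L)$), split the indexing set into $A=\{p\geq x:a_j\not\leq p\}$ and $B=\{p\geq x:a_j\leq p\}$, and observe $\bigwedge B\geq a_j$, so $x=\bigwedge A\wedge \bigwedge B\geq \bigwedge A\wedge a_j$; the identity $x=a_j\to x$ (valid since $x\in \op(a_j)$) then yields $\bigwedge A\leq x$, and combined with the trivial $\bigwedge A\geq x$ we obtain $x=\bigwedge A$. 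Each $p\in A$ satisfies $p\geq b_j$ and $a_j\not\leq p$, so $p\in \op(a_j)\cap \cl(b_j)=C_j$; hence $x$ is a meet of primes of $C_j$, and $C_j$ is spatial. Being also pointless, $C_j=\mathfrak{M}(\pt(C_j))=\mathfrak{M}(\varnothing)=\{1\}$, so $T=\bigvee_j \{1\}=\{1\}$, a contradiction.

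I expect the spatiality argument for the locally closed summands to be the main technical hurdle, as it rests on the delicate interplay between the Heyting identity encoding openness and the containment $C_j\subseteq \spa(L)$. The invocation of the criterion ``totally spatial iff every non-trivial sublocale contains a prime'' can also be checked directly: if a sublocale $S\subseteq \spa(L)$ had $x\in S\setminus \spa(S)$ and $y=\bigwedge\{p\in \pt(S):p\geq x\}>x$, then $\cl(x)\cap S\cap \op(y)$ would contain $y\to x\neq 1$ and hence be a non-trivial sublocale of $\spa(L)$; yet any prime in it would sit above $x$ and not above $y$, contradicting the definition of $y$ and what we have just established.
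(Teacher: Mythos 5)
Your proof is correct, and its overall skeleton matches the paper's: reduce to showing that $\spa(L)$ has no nontrivial pointless sublocales (the Niefield--Rosenthal criterion), note that a pointless sublocale is automatically a $D$-sublocale and hence smooth by hypothesis, and then argue that a smooth sublocale contained in $\spa(L)$ must be spatial, hence trivial when pointless. Where you genuinely diverge is in how that last implication is established. The paper relativizes: it asserts that a smooth sublocale of $L$ lying below $\spa(L)$ is smooth \emph{in} $\spa(L)$ (i.e. $\SSS_b(\spa(L))\supseteq\ \downarrow^{\SSS_b(L)}\spa(L)$) and then applies Corollary~\ref{sp} to the spatial frame $\spa(L)$ to conclude spatiality. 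You instead unfold smoothness into an explicit join of locally closed pieces $\op(a_j)\cap\cl(b_j)$ and show by a direct Heyting-algebra computation that each such piece contained in $\spa(L)$ is spatial: splitting the primes above $x$ according to whether $a_j\leq p$ and using $x=a_j\to x$ is exactly the right calculation, the fact that a prime $p$ with $a_j\not\leq p$ lies in $\op(a_j)$ is standard, and a pointless spatial sublocale is indeed $\{1\}$. Your version is more elementary and self-contained (you also reprove the Niefield criterion rather than citing \cite[p.~269]{niefield87}), at the cost of length; the paper's is shorter because it leans on the relativization of $\SSS_b$ and on Corollary~\ref{sp}, both already available. Both arguments are valid.
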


\begin{proof}
Let $S$ be a pointless sublocale contained in $\spa(L)$.  Then $S\in \SSS_D(L)\subseteq \SSS_b(L)$. 
 It is easy to show  that  one always has  $\SSS_b(\spa(L))\supseteq \downarrow^{\SSS_b(L)}\spa(L)$. It follows that $S\in \SSS_b(\spa(L))$. But $\spa(L)$ is spatial and so $S\in \SSS_b(\spa(L))\subseteq \spa[\SSS(\spa(L))]$, but a spatial pointless sublocale must be trivial. Hence $\spa(L)$ does not contain any nontrivial pointless sublocales, which by \cite[p.~269]{niefield87} implies that $\spa(L)$ is totally spatial.
\end{proof} 

We now characterize frames for which the inclusion $\SSS_D(L)\subseteq \SSS_b(L)$ holds. Despite the fact that we have not been able to find a clear geometric condition as those occurring in other results of the paper (spatiality, total spatiality, scatteredness, \donotbreakdash{$T_D$}spatiality,\dots), condition (2) below indeed tells us something: the assumption in (1) (i.e. that a given sublocale $S$  is a  \donotbreakdash{$D$}sublocale) is relative to $L$, whereas the assumption in (2) is absolute, in the sense that it does not depend on the ambient frame $L$:

\begin{proposition}\label{charDsc}
For a frame $L$, the following are equivalent:
\begin{enumerate}[\normalfont(1)]
\item $\SSS_D(L)\subseteq \SSS_b(L)$ (i.e. every \donotbreakdash{$D$}sublocale is smooth);
\item Sublocales of $L$ without covered primes in themselves are smooth in $L$.
\end{enumerate}
\end{proposition}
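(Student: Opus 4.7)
The plan is to prove the two implications separately. The direction $(1) \Rightarrow (2)$ is immediate: if $S \in \SSS(L)$ satisfies $\pt_D(S) = \emptyset$, then trivially $\pt_D(S) \subseteq \pt_D(L)$, so $S \in \SSS_D(L)$, and $(1)$ then forces $S \in \SSS_b(L)$.

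For $(2) \Rightarrow (1)$, let $S$ be a D-sublocale. I would decompose $S$ as $S = U \vee V$, where $U := \bigvee \{\bl(p) \mid p \in \pt_D(S)\}$ is the join of the point sublocales on the covered primes of $S$ (so that $U \subseteq S$), and $V := S \setminus U$ is the co-Heyting difference. Since $\pt_D(S) \subseteq \pt_D(L)$, each $\bl(p)$ appearing in the definition of $U$ is complemented in $\SSS(L)$ by Lemma~\ref{a'}, so $U \in \SSS_b(L)$. The identity $S = U \vee (S \setminus U)$ together with $U \subseteq S$ gives $S = U \vee V$, and Proposition~\ref{impr} ensures that $V$ is again a D-sublocale.

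The key step is then to check that $\pt_D(V) = \emptyset$, so that hypothesis $(2)$ can be applied to $V$. Suppose for a contradiction that $p \in \pt_D(V)$. Because $V$ and $S$ are D-sublocales with $V \subseteq S$, we obtain $p \in \pt_D(L) \cap S = \pt_D(S)$, and therefore $\bl(p) \subseteq U$. Since $\bl(p)$ is complemented in $\SSS(L)$, coframe distributivity yields
\[
V = (V \cap \bl(p)) \vee (V \cap \bl(p)^c) \subseteq U \vee (V \cap \bl(p)^c),
\]
whence $S = U \vee V \subseteq U \vee (V \cap \bl(p)^c)$. The defining property of $V = S \setminus U$ as the smallest $R$ with $S \subseteq U \vee R$ now forces $V \subseteq V \cap \bl(p)^c$, i.e.\ $V \cap \bl(p) = 0$, which contradicts $p \in V \cap \bl(p)$ (since $p \neq 1$). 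Hypothesis $(2)$ thus applies to $V$ and gives $V \in \SSS_b(L)$; and since $\SSS_b(L)$ is closed under joins, we conclude $S = U \vee V \in \SSS_b(L)$.

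The only nontrivial step is the minimality argument for $V = S \setminus U$, where one must convert an inclusion obtained by distributivity into an equality forcing $V$ to avoid $\bl(p)$; I do not anticipate any further obstacle, as the remainder of the argument is pure assembly from Lemma~\ref{a'}, Proposition~\ref{impr}, and the standard properties of the coframe difference listed in the preliminaries.
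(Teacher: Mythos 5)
Your proof is correct and takes essentially the same route as the paper's: the identical decomposition $S=\mathfrak{M}(\pt_D(S))\vee\bigl(S\setminus \mathfrak{M}(\pt_D(S))\bigr)$, the same appeal to Proposition~\ref{impr} and Lemma~\ref{a'}, and the same contradiction argument showing the difference part has no covered primes. The only cosmetic divergence is in the final step: you derive $V\cap\bl(p)=0$ from the universal property of the co-Heyting difference, whereas the paper uses the inclusion chain $\bl(p)\subseteq S\setminus\spa_D(S)\subseteq S\setminus\bl(p)\subseteq\bl(p)^c$.
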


\begin{proof}
(1)$\implies$(2). Let $S$ be a sublocale such that $\pt_D(S)=\varnothing$. Then $S\in \SSS_D(L)\subseteq \SSS_b(L)$ so it is smooth.\\[2mm]
(2)$\implies$(1). Let $S\in \SSS_D(L)$, and consider the decomposition $S=\spa_D(S)\vee (S\setminus \spa_D(S))$. Observe that by Proposition~\ref{impr}, $S\setminus \spa_D(S)$ is a \donotbreakdash{$D$}sublocale as well, i.e. $\pt_D(S\setminus \spa_D(S))\subseteq \pt_D(L)$. Assume that there exists some $p\in \pt_D( S\setminus \spa_D(S) ).$ Then $p\in \pt_D(L)\cap S\subseteq \pt_D(S)$ and hence $\bl(p)\subseteq \spa_D(S)$. Therefore, $\bl(p)\subseteq S\setminus \spa_D(S)\subseteq S\setminus \bl(p)\subseteq L\setminus \bl(p)= \bl(p)^c$, which yields a contradiction. Hence $\pt_D(S\setminus \spa_D(S))=\varnothing$ and so $S\setminus \spa_D(S)$ is smooth in $L$. Moreover, $\spa_D(S)=\bigvee_{p\in \pt_D(S)}\bl(p)$ is also a smooth sublocale because $\pt_D(S)\subseteq \pt_D(L)$ and Lemma~\ref{a'}. Hence $S=\spa_D(S)\vee (S\setminus \spa_D(S))$ is smooth, as it is a join of smooth sublocales.
\end{proof}

By analogy with the classical spectrum, frames $L$ for which $\SSS_D(L)\subseteq \SSS_b(L)$ holds are going to be called \emph{\donotbreakdash{$D$}scattered}.

\section{Adding $\mf{S}_{c}(L)$ to the picture}

First, let us connect the frame of joins of closed sublocales with the spatialization sublocale of $\SL$.
\begin{lemma}\label{t11}
A frame $L$ is spatial if and only if $\SLc\se \spa[\SL]$.
\end{lemma}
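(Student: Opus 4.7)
The plan is to handle the two implications separately, with the backward direction amounting to a one-line observation and the forward direction needing one mildly geometric step.

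For the backward implication, the key observation is that the top element of $\SL$, namely $L$ itself, equals $\cl(0)=\up 0$, so $L$ belongs to $\SLc$ as the trivial (one-term) join of a closed sublocale. Hence if $\SLc\se \spa[\SL]$, then $L\in \spa[\SL]$, which says precisely that $L$, viewed as a sublocale of itself, is a spatial frame. That is the whole argument on this side.

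For the forward implication, my plan is first to show that every closed sublocale of a spatial frame is spatial, and then to pass to arbitrary joins. For the first step, note that $\cl(a)=\up a$ inherits its meets from $L$, so the primes of $\cl(a)$ (as a frame in its own right) are exactly $\set{p\in \pt(L)\mid p\geq a}$. Now given any $x\in \cl(a)$ with $x\ne 1$, spatiality of $L$ yields a family $P\se \pt(L)$ with $x=\bwe P$; since each $p\in P$ satisfies $p\geq x\geq a$, the family $P$ actually lives in $\cl(a)$ and witnesses that $x$ is a meet of primes of $\cl(a)$. Hence $\cl(a)$ is spatial.

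For the second step I will invoke the fact, recorded in the preliminaries and in the discussion of the Suarez diagram, that $\spa[\SL]$ is a subcolocale of $\SL$. Concretely this means that $\spa[\SL]^{op}$ is a sublocale of the frame $\SL^{op}$ and therefore $\spa[\SL]$ is closed under the joins of $\SL$. Combining this with the first step, any element of $\SLc$ is a join (in $\SL$) of closed sublocales, each of which lies in $\spa[\SL]$, and so the join itself lies in $\spa[\SL]$; this gives $\SLc\se \spa[\SL]$.

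There is no real obstacle here; the only conceptual point worth flagging is the order-reversal between $\SL$ and $\SL^{op}$, so that ``sublocale of the coframe $\SL$'' (i.e.\ subcolocale) translates into ``closed under joins in $\SL$'', which is exactly what is needed to lift spatiality from individual closed sublocales to arbitrary joins of them.
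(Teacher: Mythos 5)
Your proof is correct and takes essentially the same approach as the paper's: the backward direction observes that $L=\cl(0)$ lies in $\SLc$, and the forward direction shows each closed sublocale of a spatial frame is spatial by noting that the primes witnessing $b=\bwe\pt(\up b)$ already lie in $\up a$. The only difference is that you explicitly justify the passage from closed sublocales to arbitrary joins of them via the closure of $\spa[\SL]$ under joins in $\SL$, a step the paper leaves implicit.
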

\begin{proof}
Suppose that $L$ is spatial. For a closed sublocale $\up a\se L$, suppose that we have $b\in \up a$. By spatiality, $b=\bwe \pt(\up b)$, but since all primes above $b$ are also above $a$, this is the same as $\bwe (\pt(\up a)\cap \up a)=\bwe \set{p\in \up a\mid b\leq p}$. Then, $\up a\se \mathfrak{M}(\pt(\up a))$, and this means that $\up a$ is spatial. Conversely, if $\SLc\se \spa[\SL]$ we have that every closed sublocale is spatial, and in particular $L$ itself is. 
\end{proof}

\begin{lemma}\label{t12}
A frame $L$ is such that all its primes are maximal if and only if $\spa[\SL]\se \SLc$.
\end{lemma}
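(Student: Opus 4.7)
The plan is to prove both implications by reducing the question about arbitrary (co)spatial sublocales to the concrete Boolean/closed sublocales attached to individual primes, using the following pivotal observation: for a prime $p\in\pt(L)$, the one-point sublocale $\bl(p)=\{1,p\}$ coincides with the closed sublocale $\cl(p)=\up p$ precisely when $p$ is a maximal element below $1$, i.e.\ a maximal prime.

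For the forward direction, I would assume every prime of $L$ is maximal, and take an arbitrary $S\in\spa[\SL]$. Since $S$ is spatial, every element of $S$ is a meet of primes of $S$, and (as $\pt(S)=\pt(L)\cap S$) those primes are primes of $L$. This gives
\[
S \;=\; \mathfrak{M}(\pt(S)) \;=\; \bigvee_{p\in\pt(S)} \bl(p).
\]
By hypothesis each $p\in\pt(S)$ is maximal in $L$, so $\bl(p)=\cl(p)$, and therefore $S=\bve_{p\in\pt(S)}\cl(p)\in\SLc$.

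For the converse, assume $\spa[\SL]\se\SLc$ and let $p\in\pt(L)$. The one-point sublocale $\bl(p)=\{1,p\}$ is a two-element chain, hence trivially spatial; so by hypothesis $\bl(p)\in\SLc$, i.e.\ $\bl(p)=\bve_{i\in I}\cl(a_i)$ for some family $\{a_i\}\se L$. Since joins in $\SL$ are supersets of each summand, $\cl(a_i)=\up a_i\se \{1,p\}$ for every $i$, and as $a_i\in\up a_i$ this forces $a_i\in\{1,p\}$. Not every $a_i$ can be $1$ (otherwise the join collapses to $\{1\}\ne\bl(p)$), so some $a_{i_0}=p$. Then $\up p=\cl(a_{i_0})\se\bve_i\cl(a_i)=\{1,p\}$, i.e.\ $p$ is maximal.

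There is no real obstacle here; the proof is a direct calculation once one sees that $\bl(p)=\cl(p)$ characterises maximality of a prime, and invokes the standard description $\mathfrak{M}(\pt(S))=\bve_{p\in\pt(S)}\bl(p)$ of the spatialization of a sublocale (already used in the excerpt, e.g.\ in Lemma~\ref{joinofbp}). The only mild subtlety is to remember that, when expressing $\bl(p)$ as a join of closed sublocales, the summands themselves must be contained in $\bl(p)$, which is what pins down the $a_i$.
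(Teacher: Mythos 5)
Your proof is correct and takes essentially the same route as the paper: the key observation in both is that a prime $p$ is maximal exactly when $\bl(p)=\cl(p)$, the forward direction writes a spatial sublocale as $\bve_{p\in\pt(S)}\bl(p)$, and the converse tests the inclusion on the one-point sublocales $\bl(p)$. If anything, your converse is slightly more careful than the paper's, which only notes that $\bl(p)$ fails to be closed, whereas you explicitly rule out $\bl(p)$ being \emph{any} join of closed sublocales.
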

\begin{proof}
Suppose that for a frame $L$ all its primes are maximal. This means that for every prime $p\in \pt(L)$ we have that $\bl(p)$ is closed, and since the elements of $\spa[\SL]$ are all joins of elements of the form $\bl(p)$, we have $\spa[\SL]\se \SLc$. Conversely, if we have a prime $p\in L$ which is not maximal, say $p<x<1$, we have that $\bl(p)\neq \cl(p)$, and since $\bl(p)$ does not equal its closure it is not closed. Then, $\spa[\SL]\nsubseteq \SLc$.
\end{proof}

Secondly, we look at the relation between $\SLc$ and $\SLd$. First, we observe that we always have $\SLc\se \SLd$ (we know that all closed sublocales are \donotbreakdash{$D$}sublocales, and the collection of \donotbreakdash{$D$}sublocales is closed under joins).

\begin{proposition}
For a frame $L$, we have that $\SLd\se \SLc$ if and only if $L$ is subfit and \donotbreakdash{$D$}scattered.
\end{proposition}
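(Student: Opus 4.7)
The plan is to assemble the equivalence from three ingredients already developed in the paper. First, the Picado–Pultr–Tozzi theorem recalled in the Preliminaries tells us that $L$ is subfit if and only if $\SLc=\SLb$; equivalently (and this is a standard characterization of subfitness, also from that same paper) $L$ is subfit if and only if every open sublocale of $L$ lies in $\SLc$. Second, by definition $L$ is \donotbreakdash{$D$}scattered precisely when $\SLd\se\SLb$, as stipulated right after Proposition~\ref{charDsc}. Third, the density of $\SLd$ established earlier in the section yields $\SLb\se \SLd$; in particular every open (indeed every smooth) sublocale is automatically a \donotbreakdash{$D$}sublocale.

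For the forward direction, assume $L$ is both subfit and \donotbreakdash{$D$}scattered. Then the chain
\[
\SLd\se\SLb=\SLc
\]
is immediate: the first inclusion is \donotbreakdash{$D$}scatteredness, and the equality is subfitness.

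For the converse, suppose $\SLd\se \SLc$. Every open sublocale $\op(a)$ is smooth, hence lies in $\SLb\se\SLd$ by density, and therefore in $\SLc$ by hypothesis. Applying the characterization of subfitness mentioned above, this forces $L$ to be subfit; then, invoking the PPT theorem again, $\SLb=\SLc$, and the hypothesis $\SLd\se\SLc$ reads $\SLd\se\SLb$, i.e.\ $L$ is \donotbreakdash{$D$}scattered.

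The only nontrivial input is the characterization of subfitness via containment of all opens in $\SLc$, which I would cite from \cite{joinsofclosed19}; everything else is a short bookkeeping of inclusions among the four subcolocales $\SLb$, $\SLd$, $\SLc$ and the ambient $\SL$, all of which have been set up in the preceding sections. No delicate structural argument about $\SLd$ itself is needed beyond its density.
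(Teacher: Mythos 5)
Your proof is correct, but the ``only if'' direction follows a genuinely different route from the paper's. The paper argues: since closed sublocales are complemented, $\SLc\se\SLb$; hence $\SLd\se\SLc$ gives $\SLd\se\SLb$, which is \donotbreakdash{$D$}scatteredness outright; and since $\SLb\se\SLd$ always holds (density of $\SLd$ plus $\SLb$ being the least dense subcolocale), the chain $\SLb=\SLd\se\SLc\se\SLb$ collapses, so $\SLc=\SLb$ and subfitness follows from the Picado--Pultr--Tozzi theorem. You instead extract subfitness first, via the characterization ``$L$ is subfit iff every open sublocale lies in $\SLc$'': opens are smooth, hence in $\SLb\se\SLd\se\SLc$. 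That characterization is indeed standard and available in \cite{joinsofclosed19}, so the step is legitimate, but it is an extra external input that the paper's preliminaries do not explicitly record (they only quote the $\SLc=\SLb$ form of the theorem), whereas the paper's argument needs nothing beyond the observation $\SLc\se\SLb$ and facts already set up. A small dividend of the paper's ordering is that it makes visible, with no extra work, that the inclusion $\SLd\se\SLc$ automatically upgrades to the equality $\SLd=\SLc$; your version recovers this too, but only after routing through subfitness. The ``if'' direction is identical in both.
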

\begin{proof}
Suppose that for a frame $L$ we have $\SLd\se \SLc$. We have that all closed sublocales are complemented, and so $\SLc\se \SLb$. We then have that $\SLd\se \SLb$, which means that $L$ is \donotbreakdash{$D$}scattered. Furthermore, as this implies also that $\SLd=\SLb$, we also have that $\SLc=\SLb$, which means that $L$ is subfit (see \cite{picado19}). Conversely, suppose that $L$ is \donotbreakdash{$D$}scattered and subfit. Since $L$ is subfit, we have that $\SLc=\SLb$ (see \cite{picado19}), and since $L$ is \donotbreakdash{$D$}scattered, we have that $\SLd=\SLb$. Hence, $\SLd=\SLc$.
\end{proof}

\bibliographystyle{acm}
\bibliography{biblioTD}
\end{document}